\documentclass[letterpaper,11pt]{amsart}
\usepackage{amsmath,amssymb,amsthm,pinlabel,tikz,hyperref,mathrsfs,color, thmtools}
\usepackage{verbatim}
\usepackage{float}
\usepackage{caption}
\usepackage{subcaption}
\newcommand{\nc}{\newcommand}
\nc{\dmo}{\DeclareMathOperator}
\dmo{\ra}{\rightarrow}
\dmo{\Prob}{\mathbb{P}}
\dmo{\E}{\mathbb{E}}
\dmo{\N}{\mathbb{N}}
\dmo{\Z}{\mathbb{Z}}
\dmo{\Q}{\mathbb{Q}}
\dmo{\R}{\mathbb{R}}
\dmo{\C}{\mathcal{C}}
\dmo{\X}{\mathcal{X}}
\dmo{\U}{\mathcal{U}}
\dmo{\T}{\mathcal{T}}
\dmo{\F}{\mathcal{F}}
\dmo{\AC}{\mathcal{AC}}
\dmo{\w}{\omega}
\dmo{\Mod}{Mod}
\dmo{\PMod}{PMod}
\dmo{\PMF}{\mathcal{PMF}}
\dmo{\Mat}{Mat}
\dmo{\supp}{supp}
\dmo{\vol}{vol}
\dmo{\B}{B}
\dmo{\PB}{PB}
\dmo{\PR}{PSL(2,\mathbb{R})}
\dmo{\GL}{GL(k, \mathbb{C})}
\dmo{\SL}{SL(2, \mathbb{Z})}
\dmo{\RP}{\mathbb{R} \mathrm{P}}
\dmo{\I}{\mathcal{I}}
\dmo{\el}{\ell_{\C}}
\dmo{\NN}{\mathcal{N}}
\dmo{\rk}{rank}
\dmo{\tr}{tr}
\dmo{\llangle}{\langle\langle}
\dmo{\rrangle}{\rangle\rangle}
\dmo{\Unif}{Unif}

\usetikzlibrary{decorations.markings}
\tikzset{->-/.style={decoration={
  markings,
  mark=at position #1 with {\arrow{>}}},postaction={decorate}}}

\nc{\nt}{\newtheorem}

\nt{theorem}{Theorem}

\newtheorem{thm}{{\bf Theorem}}[section]
\newtheorem{lem}[thm]{{\bf Lemma}}
\newtheorem{cor}[thm]{{\bf Corollary}}
\newtheorem{prop}[thm]{{\bf Proposition}}

\newtheorem{remark}[thm]{Remark}

\newtheorem{definition}[thm]{Definition}
\numberwithin{equation}{section}

\title[Topological entropy from a typical Thurston's construction]{Topological entropy of pseudo-Anosov maps from a typical Thurston's construction}

\date{\today}
\author{Hyungryul Baik}
\address{%
		Department of Mathematical Sciences, KAIST\\
		291 Daehak-ro Yuseong-gu, Daejeon, 34141, South Korea 
}
\email{%
        hrbaik@kaist.ac.kr
}

\author{Inhyeok Choi}

\address{%
		Department of Mathematical Sciences, KAIST\\
		291 Daehak-ro Yuseong-gu, Daejeon, 34141, South Korea 
}
\email{%
        inhyeokchoi@kaist.ac.kr
        }

\author{Dongryul M. Kim}

\address{%
		Department of Mathematical Sciences, KAIST\\
		291 Daehak-ro Yuseong-gu, Daejeon, 34141, South Korea 
}
\email{%
        dongryul.kim@kaist.ac.kr
}

\begin{document}
\begin{abstract}
	
In this paper, we develop a way to extract information about a random walk associated with a typical Thurston's construction. We first observe that a typical Thurston's construction entails a free group of rank 2. We also present a proof of the spectral theorem for random walks associated with Thurston's construction that have finite second moment with respect to the Teichm\"uller metric. Its general case was remarked by Dahmani and Horbez. Finally, under a hypothesis not involving moment conditions, we prove that random walks eventually become pseudo-Anosov.
	
As an application, we first discuss a random analogy of Kojima and McShane's estimation of the hyperbolic volume of a mapping torus with pseudo-Anosov monodromy. As another application, we discuss non-probabilistic estimations of stretch factors from Thurston's construction and the powers for Salem numbers to become the stretch factors of pseudo-Anosovs from Thurston's construction.
	
%In this paper, we develop a way to extract information about typical Thurston's construction via random walk. We first observe that a typical Thurston's construction entails a free group of rank 2. We also present a proof of the spectral theorem for random walks associated with Thurston's construction which have finite second moment with respect to the Teichm\"uller metric. The general case of this was remarked by Dahmani and Horbez. We finally prove a behavior of random walk eventually being pseudo-Anosov without moment condition. 

%We then present applications of our results. We first discuss a random analogy of Kojima and McShane's estimation of hyperbolic volume of a mapping torus with pseudo-Anosov monodromy. As another application, even with the probabilistic way, we obtain non-probabilistic estimations of stretch factors from Thurston's construction and the powers for Salem numbers to be the stretch factors of pseudo-Anosovs from Thurston's construction.

\noindent{\bf Keywords.} Topological entropy, Thurston's construction, pseudo-Anosov surface homeomorphism, Markov chains, hyperbolic volume of mapping torus, Salem numbers.

\noindent{\bf MSC classes:} 37B40, 37D40, 37E30, 60B15, 60G50.
\end{abstract}

\maketitle

%%%%%%%%%%%%%%%%%%%%%%%%%%%%%%%%%%%%%%%%%%%%%%%%
%
%							Introduction
%
%%%%%%%%%%%%%%%%%%%%%%%%%%%%%%%%%%%%%%%%%%%%%%%%

\section{Introduction}	\label{sec:introduction}

For a closed orientable connected surface $S$ (or $S_g$ when $g$ plays a role) of genus $g > 1$, its \emph{mapping class group} $\Mod(S)$ is defined as follows. $$\Mod(S) := \mathrm{Homeo}^{+}(S) / \mbox{Isotopy}$$ The elements of $\Mod(S)$ are called \textit{mapping classes} and classified, by the Nielsen--Thurston classification, into three categories: periodic, reducible, and pseudo-Anosov elements. Among them, pseudo-Anosov mapping classes are considered the generic ones (cf. \cite{erlandsson2020genericity}) but it is not straightforward to come up with concrete examples. 

In this regard, some constructions for pseudo-Anosovs have been developed, such as Thurston's construction \cite{thurston1988geometry} and Penner's construction \cite{penner1988construction}. Both constructions use Dehn twists along a pair of multicurves which fills the surface. Penner originally conjectured that every pseudo-Anosov mapping class has some power arising from Penner's construction. However, Shin and Strenner showed in \cite{shin2016pseudo} that coronal pseudo-Anosov mapping classes, which are characterized by their stretch factors, have no powers constructed from Penner's construction. This answers Penner's conjecture negatively.

We can then ask whether a typical pseudo-Anosov mapping class is obtained from Penner's construction. We can also ask an analogous question for Thurston's construction. Motivated by this, we study the behavior of pseudo-Anosov maps obtained from Thurston's construction, especially in a typical circumstance.

Let $(A, B)$ be a filling pair of multicurves on $S$. Thurston's construction describes the subgroup $\langle T_A, T_B \rangle$ of $\Mod(S)$ generated by multitwists $T_A$ and $ T_B$ along $A$ and $B$, respectively. In \cite{leininger2004groups}, Leininger provided a necessary and sufficient condition for $(A, B)$ to result in $\langle T_A, T_B \rangle \cong F_2$, the free group of rank 2. It asserts that $\langle T_A, T_B \rangle \ncong F_2$ if and only if $A$ and $B$ have a certain tree-like position as in Figure \ref{fig:treelike}. We summarize this as follows:

\begin{restatable*}{obs}{typicalthurston}\label{typicalthurston}

For a typical filling pair of multicurves $(A, B)$, we have $\langle T_A, T_B \rangle \cong~F_2$.
	
\end{restatable*}

From now on, we assume the conclusion of Observation \ref{typicalthurston}. We then consider probability measures supported on the subgroup $\langle T_{A}, T_{B} \rangle$ and random mapping classes arising from them. 

Let $\nu : \Mod(S) \to [0, 1]$ be a probability measure with the support $\supp \nu := \{g \in \Mod(S) : \nu(g) > 0 \}$ contained in $\langle T_A, T_B \rangle$. Let us also fix an initial mapping class $g_{0} \in \Mod(S)$, which is assumed to be the identity element unless otherwise stated. Then $\nu$ induces a probability measure $\Prob$ on the space of sample paths $\Mod(S)^{\N}$ as follows. Given elements $g_{1}, \ldots, g_{n} \in \Mod(S)$, we define a cylinder set \[
[g_1, \ldots, g_n] := \{ \w = (\w_i) \in \Mod(S)^{\N} : \w_i = g_i \mbox{ for } 1 \le i \le n \}
\]
and assign \[
\Prob([g_1, \ldots, g_n]) = \nu(g_{0}^{-1}g_{1}) \nu(g_{1}^{-1}g_{2}) \cdots \nu( g_{n-1}^{-1}g_{n}).
\]
Throughout the paper, $\w$ stands for a random walk or a sample path on $\Mod(S)$, and $\w_n$ for the $n$-th step mapping class of $\w$.

In this setting, we investigate how much and how often $\w$ produces pseudo-Anosov maps. This topic was handled by Dahmani and Horbez in a more general setting, and we present one of their spectral theorems below. In the following, $\nu$ is said to be \text{non-elementary} if $\langle \supp \nu \rangle$ contains a pair of pseudo-Anosov maps with disjoint sets of fixed points in $\PMF(S)$.

\begin{restatable*}[Dahmani--Horbez, \cite{dahmani2018spectral}]{thm}{DH} \label{thm:DH}
	Let $\nu$ be a non-elementary probability measure on $\Mod(S)$ with finite support, and $\Prob$ be the induced measure on $\Mod(S)^{\N}$. Then for $\Prob-a.e.$ $\w\in \Mod(S)^{\N}$, $\w_n$ is eventually pseudo-Anosov. Moreover,  the stretch factor $\lambda_{\w_n}$ of a pseudo-Anosov $\w_n$ satisfies $$\lim_{n \to \infty} {1 \over n} \log \lambda_{\w_n} = L_{\T} > 0$$ where the constant $L_{\T} := \lim_{n \to \infty} {1 \over n} d_{\T}(\X, \w_n \cdot \X)$ is the \emph{drift} of $\w$ with respect to the Teichm{\"u}ller metric $d_{\T}$.
\end{restatable*}

According to Masai \cite{masai2018topological}, the drift $L_{\T}$ is almost surely equal to the topological entropy $h(\w)$ of $\w$ if $\nu$ is non-elementary and has finite first moment with respect to the Teichm{\"u}ller metric. See Theorem \ref{thm:masai}. In addition, the topological entropy $h(\varphi)$ of a pseudo-Anosov map $\varphi$ is equal to $\log \lambda_{\varphi}$, where $\lambda_{\varphi}$ is the stretch factor of $\varphi$ (cf. \cite[Expos\'e Ten]{FLP}). Thus, Dahmani and Horbez's result can be interpreted as an asymptotic relation between the topological entropy of the entire random walk $\w$ and that of each random mapping class $\w_n$. To keep this interpretation, we write spectral theorems in terms of $h(\w)$ even though it does not depend on the choice of sample path $\w$ by \cite{masai2018topological}.

In \cite[Remark 2.7, 3.2]{dahmani2018spectral}, the authors assert that the finite support condition in Theorem \ref{thm:DH} can be replaced with the finite second moment condition. For the sake of completeness, we present the proof of this assertion in our setting.

\begin{restatable}{theorem}{secondmoment}
Let $(A, B)$ be a filling pair of multicurves on $S$ and $\nu$ be a non-elementary probability measure on $\langle T_A, T_B \rangle$. Suppose that $\nu$ has finite second moment with respect to the Teichm\"uller metric, i.e., for some $\X \in \T(S)$, $$\sum_{g \in \Mod(S)} d_{\T}(\X, g \cdot \X)^2 \nu(g) < \infty.$$ Then for $\Prob-a.e.$ $\w \in \Mod(S)^{\N}$, $\w_{n}$ is eventually pseudo-Anosov and  $$\lim_{n \to \infty} {1 \over n} \log \lambda_{\w_n} = h(\w).$$
\end{restatable}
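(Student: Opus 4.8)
The plan is to reduce the finite second moment case to the finite support case (Theorem \ref{thm:DH}) by an approximation argument, following the strategy sketched by Dahmani and Horbez. The key technical input that makes this work is the subadditive ergodic theorem applied to the sequence $n \mapsto d_{\T}(\X, \w_n \cdot \X)$, which under the finite \emph{first} moment assumption (and finite second moment certainly implies finite first moment) guarantees that the drift $L_{\T} = \lim_n \frac{1}{n} d_{\T}(\X, \w_n \cdot \X)$ exists $\Prob$-a.e. and is a.e. constant; non-elementariness of $\nu$ gives $L_{\T} > 0$ by Maher's positive drift theorem. Combined with Masai's theorem this already identifies $L_{\T}$ with $h(\w)$. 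So the content of the statement is the two assertions: (i) $\w_n$ is eventually pseudo-Anosov, and (ii) $\frac{1}{n} \log \lambda_{\w_n} \to L_{\T}$.

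For (i) and the lower bound $\liminf \frac{1}{n}\log\lambda_{\w_n} \ge L_{\T}$, I would invoke the theory of random walks on groups acting on the curve complex $\mathcal{C}(S)$, which is Gromov hyperbolic: since $\langle \supp \nu\rangle$ is non-elementary, the action on $\mathcal{C}(S)$ is non-elementary, and the sample path converges a.e. to a point in the Gromov boundary (the space of filling laminations). The standard deviation/geodesic-tracking estimates — valid under a finite first moment condition with respect to $d_{\T}$, since the orbit map $\T(S) \to \mathcal{C}(S)$ is coarsely Lipschitz — show that $\w_n$ eventually acts as a loxodromic isometry of $\mathcal{C}(S)$ with translation length growing linearly, hence $\w_n$ is eventually pseudo-Anosov. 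To get the sharp asymptotic rate one passes to Teichm\"uller space: the a.e. tracking of the sample path by a Teichm\"uller geodesic ray, established by Tiozzo and by Dahmani--Horbez in the relevant generality under the second moment hypothesis, together with the fact that $\log \lambda_{\w_n}$ is the translation length of $\w_n$ on $(\T(S), d_{\T})$, yields $\frac{1}{n}\log\lambda_{\w_n} \to L_{\T}$ exactly as in the proof of Theorem \ref{thm:DH}. Concretely, one shows $d_{\T}(\X, \w_n\X)$ and the translation length $\log\lambda_{\w_n}$ differ by a sublinear amount along almost every sample path, using that the axis of $\w_n$ fellow-travels the segment $[\X, \w_n\X]$ with an error controlled by the Gromov product, which is $o(n)$ by convergence to the boundary.

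The approximation step is where the second moment hypothesis is actually used and where I expect the main work to lie. Given $\nu$ with finite second moment, for $\epsilon > 0$ choose a finitely supported $\nu'$ that is non-elementary and such that the second moments are close; then couple the two random walks so that they agree on a set of large probability at each step, or alternatively use the continuity of the drift $L_{\T}$ and of the boundary convergence under such approximations. The point is that the distance between $\w_n$ and the approximating walk $\w_n'$ is controlled in $d_{\T}$, so the translation lengths and drifts vary continuously; letting $\epsilon \to 0$ transfers the conclusion of Theorem \ref{thm:DH} to $\nu$. One must be a little careful that "eventually pseudo-Anosov" is not obviously preserved under limits, so it is cleaner to prove (i) directly via the curve complex argument above (which only needs the first moment) and use the approximation solely for the quantitative identification $\lim \frac1n \log\lambda_{\w_n} = h(\w)$.

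The main obstacle, then, is making the coupling/approximation rigorous with the second moment rather than finite support: one needs that truncating $\nu$ to a large finite set changes the drift and the boundary behavior by a controlled amount, which requires the second moment to dominate the tail contributions $\sum_{d_{\T}(\X, g\X) > R} d_{\T}(\X, g\X)\,\nu(g) \to 0$ as $R \to \infty$ — a statement that fails for merely first moment measures and is exactly why Dahmani--Horbez state the extension with the second moment. I would isolate this tail estimate as the key lemma and then run the rest of the argument as a routine limiting procedure combined with Masai's identification of $L_{\T}$ with $h(\w)$.
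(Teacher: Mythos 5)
Your strategy is genuinely different from the paper's, and it has a gap that I don't think the sketch closes.

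The paper does not approximate $\nu$ by finitely supported measures and does not invoke the curve complex at all. Instead it exploits the structure specific to Thurston's construction: $\langle T_A, T_B\rangle$ acts by isometries on the Teichm\"uller disk $f(\mathbb{H}^2)$, so the problem reduces to a random walk on $\operatorname{Isom}^+(\mathbb{H}^2)$. The proof then combines two ingredients: (a) the Maher--Tiozzo coarse translation-length formula $\tau(g) \approx d(\X, g\X) - 2(g\X\mid g^{-1}\X)_\X$, and (b) a deviation estimate of Benoist--Quint which, under a finite $p$-th moment with $p>1$, produces constants $C_n$ with $\sum n^{p-2}C_n < \infty$ bounding $\Prob\bigl(|h_x(\w_n^{-1}\X) - n\lambda| > \varepsilon n\bigr)$ and $\Prob\bigl(|d(\X,\w_n\X) - n\lambda|>\varepsilon n\bigr)$. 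Setting $p=2$ makes the $C_n$ themselves summable, so after bounding the relevant Gromov products at the midpoint $m = \lceil n/2\rceil$, the paper applies Borel--Cantelli to get almost-sure control of $(\w_n^{-1}\X\mid \w_n\X)_\X$ and hence of the translation length. The second moment is used precisely and only to upgrade summability for Borel--Cantelli; the ``eventually pseudo-Anosov'' conclusion follows because the translation length becomes strictly positive.

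The approximation/coupling step you propose is where I see the main gap. If you couple the increments so that $g_i = g_i'$ with high probability per step, a single early mismatch at step $k+1$ already makes $\w_n = \w_{k+1} h$ and $\w_n' = \w_{k+1}' h$ differ by $d(\w_n\X, \w_n'\X) = d(h\X,\ (g_{k+1}')^{-1}g_{k+1}\, h\X)$, which depends on where $h\X$ sits and is not controlled by the one-step perturbation --- the isometry action propagates the error in an uncontrolled way through the remaining $n-k$ steps. So neither the displacement nor the translation length of $\w_n$ is close to that of $\w_n'$ just because the walks mostly agree increment-by-increment, and ``eventually pseudo-Anosov'' is, as you yourself flag, not obviously stable under the limit. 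Your fallback of proving eventual pseudo-Anosov directly via the curve complex and geodesic tracking leans on results (almost-sure tracking under second moment) that are essentially as strong as what is being proved, so this is circular rather than a reduction.

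Finally, your diagnosis of the role of the second moment is not quite right. The tail quantity you isolate, $\sum_{d_\T(\X,g\X)>R} d_\T(\X,g\X)\,\nu(g) \to 0$ as $R\to\infty$, already holds under finite first moment (it is the tail of a convergent series). What the second moment actually buys is a summable rate of decay in the deviation probabilities, which is exactly what is needed to turn the in-probability convergence of $\frac{1}{n}\log\lambda_{\w_n}$ (Theorem~\ref{thm:inprobconv}, which only needs finite first moment) into almost-sure convergence via Borel--Cantelli. That is the real threshold, and it is what the Benoist--Quint lemma encodes through the condition $\sum n^{p-2}C_n < \infty$.
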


As noted in this theorem, random mapping classes eventually become pseudo-Ansov \emph{almost surely} if the second moment is finite. In contrast, without moment condition, previously known results are in the sense of \emph{phenomena in probability}, which is weaker than almost sure phenomena. For example, Rivin \cite{rivin2008walks} and Kowalski \cite{Kowalski2008sieve} proved that nearest-neighbor random walks on $\Mod(S)$ become pseudo-Anosov with asymptotic probability one. Note that their strategies are applicable in a broader setting, not only for mapping class groups. In addition, Maher proved in \cite{maher2011random} the convergence $\Prob(\w_n \mbox{ is pseudo-Anosov}) \to 1$ as $n \to \infty$ if the governing probability measure $\nu$ on $\Mod(S)$ is non-elementary. 

Hence, it is natural to ask whether an almost sure phenomenon can be obtained without any moment condition. We answer this question in the setting of Thurston's construction, utilizing facts about free groups.

\begin{restatable}{theorem}{eventuallypA} \label{theorem:C}
	Let $(A, B)$ be a filling pair of multicurves on $S$ such that $\langle T_A, T_B \rangle \cong F_{2}$, and $\nu : \Mod(S) \to [0, 1]$ be a non-elementary probability measure with $\supp \nu \subseteq \langle T_A, T_B \rangle$ and $\rk \langle \supp \nu \rangle <  \infty$. Then for $\Prob-a.e.$ $\w \in \Mod(S)^{\N}$, there exists $N \in \N$ such that $$\w_n \mbox{ is pseudo-Anosov for all } n > N.$$
\end{restatable}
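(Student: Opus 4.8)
The plan is to transport the problem, via Thurston's construction, into $\PR$, and then to use the free, finite-rank structure in place of the moment assumptions that appear elsewhere. Thurston's construction equips $\langle T_A,T_B\rangle$ with a homomorphism $\rho\colon\langle T_A,T_B\rangle\to\PR$ sending $T_A,T_B$ to parabolics fixing distinct points of $\RP^1$, such that $\varphi$ is pseudo-Anosov precisely when $\rho(\varphi)$ is hyperbolic, i.e.\ $|\tr\rho(\varphi)|>2$ — in particular a pseudo-Anosov $\varphi$ has $\rho(\varphi)\neq 1$. By Leininger's criterion the hypothesis $\langle T_A,T_B\rangle\cong F_2$ means $(A,B)$ is not in tree-like position, and then $\rho$ is faithful with discrete free image, a Schottky-type group generated by the two parabolics; since $\mathbb H^2/\rho(\langle T_A,T_B\rangle)$ has finite type, the \emph{non}-pseudo-Anosov elements of $\langle T_A,T_B\rangle\cong F_2$ are exactly the identity together with the conjugates of the powers of finitely many primitive cusp elements $c_1,\dots,c_r$, among which one may take $c_1=T_A$ and $c_2=T_B$ (any further $c_i$ has syllable length $\geq 2$ in the basis $\{T_A,T_B\}$). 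On the probabilistic side, $G:=\langle\supp\nu\rangle$ is a finitely generated subgroup of the free group $\langle T_A,T_B\rangle$, hence free of finite rank by the Nielsen--Schreier theorem; and non-elementarity of $\nu$ gives two pseudo-Anosovs in $G$ with disjoint fixed sets in $\PMF(S)$, whose $\rho$-images are two hyperbolics of $\PR$ with disjoint fixed pairs on $\RP^1$, so $\rho_*\nu$ is non-elementary on $\PR$ and $G$ is non-amenable.

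\emph{Easy reductions.} A random walk on a free group of finite rank $\geq 2$ driven by a non-elementary measure converges almost surely to a point $\xi_\infty\in\partial F_2$ with non-atomic hitting measure, and this needs no moment condition. Since $\rho_*\nu$ is non-elementary and $G$ is non-amenable, $(\rho(\w_n))$ is transient, so $\rho(\w_n)=1$ — that is, $\w_n=1$, as $\rho$ is faithful — only finitely often. By the dictionary it therefore suffices to show that, almost surely, $\w_n$ is eventually not conjugate in $\langle T_A,T_B\rangle$ to a power of any $c_i$. For $c_1=T_A$ and $c_2=T_B$ this says the cyclically reduced core of $\w_n$, written in the free basis $\{T_A,T_B\}$, eventually has at least two syllables; a remaining $c_i$ of syllable length $\geq 2$ is excluded by the same kind of estimate, since being conjugate to a power of such a $c_i$ would force an arbitrarily long stretch of $\xi_\infty$ to be periodic with period $c_i$, a null event. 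Thus the heart of the matter is that \emph{the syllable count of the cyclic core of $\w_n$ tends to infinity almost surely}.

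\emph{The crux.} This is where the free, finite-rank hypothesis replaces a moment assumption: because $\nu$ may have heavy tails one cannot bound $\Prob(\w_n\text{ not pseudo-Anosov})$ by the $\nu^{*n}$-mass of a conjugacy-closed set and sum over $n$ (the Green function need not decay exponentially), and one must analyze the reduced word of $\w_n$ directly. Writing $\w_n=h_1\cdots h_n$ with $h_i$ i.i.d.\ $\sim\nu$, the reduced word of $\w_n$ consists of an ever-longer agreement prefix with $\xi_\infty$, of length $k_n=(\w_n\mid\xi_\infty)_1\to\infty$, followed by a tail whose combinatorics are governed by an independent copy of the harmonic limit (the limit $\xi_\infty^{(n)}$ of the shifted walk $h_{n+1}h_{n+2}\cdots$, paired with $\w_n^{-1}$ via a Gromov-product identity); since $\xi_\infty$ almost surely has infinitely many syllables (its law being non-atomic), the cyclic core of $\w_n$ contains, up to what the tail can cancel, the initial segment of $\xi_\infty$ of length $k_n$, whence its syllable count is bounded below by the syllable length of $\w_n$ minus the syllable length contributed by the tail. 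It then remains to show that the syllable length of $\w_n$ — the distance from $1$ to $\w_n$ in the free-product metric on $\langle T_A,T_B\rangle=\langle T_A\rangle*\langle T_B\rangle$, equivalently the displacement of a basepoint in the Bass--Serre tree — tends to infinity faster than the syllable length of the tail; this rests on the facts that $\supp\nu$ is contained in no conjugate of $\langle T_A\rangle$ or $\langle T_B\rangle$ (so the walk is non-elementary for the Bass--Serre tree action) and that $G$ is free of finite rank, with no moment hypothesis entering. When the increments are ``spread out'' the syllables of the two types survive the reduction automatically; the genuine work is the heavy-tailed case, in which syllables may cancel and the surviving count must be bounded below by hand.

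\emph{Main obstacle.} The entire difficulty lies in executing the previous paragraph with no hypothesis on the moments of $\nu$: the two quantitative inputs — the control of the syllables cancelled off the tail, and the growth of the syllable length of $\w_n$ relative to that tail — are precisely what the free, finite-rank structure supplies, and they play the role that the finite second moment plays in the preceding theorem and in Dahmani--Horbez. A preliminary step I would settle first is to record precisely, in the multicurve setting, Thurston's pseudo-Anosov criterion and Leininger's dichotomy, so that ``not pseudo-Anosov'' is identified cleanly with ``trivial, or conjugate into one of the finitely many cusp subgroups, among them $\langle T_A\rangle$ and $\langle T_B\rangle$''.
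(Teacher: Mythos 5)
The preliminary reductions are fine and match the paper's framing: you correctly identify (via Theorem~\ref{thm:classofelts}) that the only non-pseudo-Anosov elements of $\langle T_A,T_B\rangle\cong F_2$ are the trivial element and the conjugates of powers of $T_A$, $T_B$, and (when $\mu=4$) $T_AT_B$, and you correctly observe that boundary convergence on $F_2$ and transience hold without moment hypotheses. So the proof reduces, as you say, to showing that $\w_n$ almost surely eventually avoids each of finitely many conjugacy-closed families of the form $\{g c^j g^{-1}\}$.

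But your proposal stops precisely where the proof begins. You acknowledge this yourself: the paragraphs headed ``The crux'' and ``Main obstacle'' do not give an argument, they describe what an argument would need to do. The heuristic you offer is also suspect. You assert that the cyclic core of $\w_n$ ``contains, up to what the tail can cancel, the initial segment of $\xi_\infty$ of length $k_n$.'' This is backwards: if $\w_n=g\,c^j\,g^{-1}$ in reduced form, the Gromov product $(\w_n\mid\xi_\infty)_1$ lives in the conjugating prefix $g$, \emph{not} in the cyclic core $c^j$. A long agreement prefix with $\xi_\infty$ is perfectly compatible with the cyclic core having one syllable; the prefix simply becomes part of the cancelled conjugator. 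Without a moment condition, a single heavy-tailed increment $h_n$ can produce exactly such a configuration, and no bound of the form ``syllables of core $\geq$ syllables of $\w_n$ minus syllables of tail'' is offered with a proof, or appears to hold. So there is a genuine gap, not merely an omitted computation.

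The paper's actual argument is of a different nature entirely and is worth contrasting. Rather than analyzing the reduced word of $\w_n$ directly, it exploits abelianization: take the free generators $v_1,\dots,v_r$ of $\langle\supp\nu\rangle$, extend them (Hall, Burns) to a free basis $v_1,\dots,v_k$ of a finite-index subgroup $L\le\langle T_A,T_B\rangle$, and consider $Q\circ Ab:L\to\Z^r$ followed by an orthogonal projection $P_i:\R^r\to\R^{r-1}$. The crucial observation is that, for each right coset $Lg_i$, the abelianized image of $\langle g_iT_Ag_i^{-1}\rangle\cap L$ lies on a single line $\langle w_i\rangle\subset\Z^r$, so visiting conjugates of powers of $T_A$ forces the $\Z^r$-valued walk $Q\circ Ab\circ\w$ to hit finitely many lines infinitely often. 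After projecting off $w_i$, the resulting walk on $\Z^{r-1}$ is genuinely $(r-1)$-dimensional and hence transient when $r\ge 4$; for $r=2,3$ one first passes to an index-$4$ subgroup of $\langle\supp\nu\rangle$ (the sets $A_k,B_k$ defined by congruence conditions on the abelianization) and a corresponding stopping-time subwalk, boosting the rank of the free group so the same transience argument applies. None of this needs any quantitative control of word length or any estimate relating the reduced word of $\w_n$ to $\xi_\infty$, which is exactly why it tolerates the absence of moment conditions. Your sketch, by contrast, tries to obtain a quantitative comparison between the cyclic core and the walk's length, which is precisely the kind of estimate one expects to require moments and which the paper deliberately avoids.

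If you want to repair your route, I would suggest abandoning the cyclic-core estimate and instead proving directly that, for each cusp element $c$, the event $\{\w_n\in\bigcup_g g\langle c\rangle g^{-1}\text{ i.o.}\}$ has probability zero, by pushing forward through a homomorphism to $\Z^d$ under which $\bigcup_g g\langle c\rangle g^{-1}$ (intersected with a fixed finite-index subgroup) has image of codimension $\ge 1$, and then invoking transience. That is, in effect, what the paper does.
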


There are reasons why we focus on Thurston's construction. First, Thurston's construction makes use of a Teichm{\"u}ller disk $f(\mathbb{H}^{2})$ in $\T(S)$, an isometrically embedded copy of the hyperbolic disk $\mathbb{H}^{2}$ of curvature $-4$. Mapping classes from Thurston's construction fix $f(\mathbb{H}^{2})$ and act as isometries of $f(\mathbb{H}^{2})$ (\cite{herrlich2007boundary}, \cite{thurston1988geometry}, \cite{leininger2004groups}). Hence, the Teichm\"uller distance can be translated into the hyperbolic distance, allowing us an explicit computation.

Moreover, let $\rho : \langle T_A, T_B \rangle \rightarrow \PR$ be the representation obtained by Thurston's construction. Then there exists a point $\mathcal{X} \in f(\mathbb{H}^{2})$ such that \[
\log \| \rho(w)\| = d_{\T}(\mathcal{X}, w \cdot \mathcal{X})
\]
for each $w \in \langle T_A, T_B \rangle$, where $\|T\| := \sup_{|x|=1}|Tx|$ is the operator norm. If a measure $\nu$ has finite first moment with respect to the Teichm\"uller metric, then $$\int \log \lVert \rho(\w_1) \rVert d\nu < \infty.$$ 
In this case, using the following theorem of Furstenberg and Kesten, we can approximate the topological entropy $h(\w)$ without referring to individual sample paths. We remark that an explicit formula for $h(\w)$ can be obtained by the work \cite{pollicott2010maximal} of Pollicott when $\nu$ is supported on finitely many mapping classes that correspond to strictly positive matrices in $\mathrm{SL}(2, \R)$.

\begin{thm}[Furstenberg--Kesten, \cite{furstenberg1960products}] \label{thm:furstenberg}
	Let $(X_n)$ be an independent and identically distributed stochastic process with values in the set of $k \times k$ matrices such that $$\E \max \{\log \lVert X_1 \rVert, 0\} < \infty.$$ Then as $n \to \infty$, $${1 \over n} \log \lVert X_1 \cdots X_{n-1} X_n \rVert \to E \quad \mbox{almost surely},$$ where $E = \lim_{n \to \infty} {1 \over n} \E \log \lVert X_1 \cdots X_{n-1} X_n \rVert $.
\end{thm}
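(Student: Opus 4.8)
The plan is to reduce the statement to Kingman's subadditive ergodic theorem, applied to $F_n := \log\lVert X_1\cdots X_n\rVert$. Since the law of each product $X_1\cdots X_n$ depends only on the joint distribution of $(X_n)$, there is no loss in working on the canonical space: let $\nu$ be the common law of the $X_n$ on the space of $k\times k$ matrices (real or complex, it makes no difference), set $\Omega = \{k\times k \text{ matrices}\}^{\N}$ with the product measure $\Prob = \nu^{\otimes\N}$, let $X_n$ be the $n$-th coordinate map, and let $\theta\colon\Omega\to\Omega$ be the left shift. Because the coordinates are i.i.d., $\theta$ is $\Prob$-preserving and ergodic (Bernoulli shifts are ergodic), and $X_i\circ\theta^n = X_{i+n}$.

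Next I would record the subadditivity. Submultiplicativity of the operator norm, $\lVert MN\rVert\le\lVert M\rVert\,\lVert N\rVert$, gives
\[
F_{n+m} = \log\lVert X_1\cdots X_{n+m}\rVert \le \log\lVert X_1\cdots X_n\rVert + \log\lVert X_{n+1}\cdots X_{n+m}\rVert = F_n + F_m\circ\theta^n,
\]
so $(F_n)_{n\ge1}$ is subadditive with respect to $\theta$. For the integrability hypothesis of Kingman's theorem, the assumption $\E\max\{\log\lVert X_1\rVert,0\}<\infty$ is precisely $F_1^+\in L^1(\Prob)$; moreover $F_n\le\sum_{i=1}^n\log\lVert X_i\rVert$ yields $\E F_n^+\le n\,\E(\log\lVert X_1\rVert)^+<\infty$ for every $n$, so each $a_n := \E F_n$ is well defined in $[-\infty,\infty)$, and $(a_n)$ is subadditive ($a_{n+m}\le a_n+a_m$, using $\theta$-invariance of $\Prob$); hence $a_n/n\to\inf_n a_n/n =: E$ by Fekete's lemma, which already gives the existence of the limit in the definition of $E$.

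Then I would invoke the form of Kingman's subadditive ergodic theorem valid when only the positive part of $F_1$ is integrable: $F_n/n$ converges $\Prob$-almost surely to a $\theta$-invariant function with values in $[-\infty,\infty)$ whose expectation equals $\lim_n a_n/n = E$. Ergodicity of $\theta$ forces this limit to be $\Prob$-almost surely the constant $E$, i.e. $\frac1n\log\lVert X_1\cdots X_n\rVert\to E$ almost surely, with $E=\lim_n\frac1n\E\log\lVert X_1\cdots X_n\rVert$ as asserted.

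The only delicate point --- not really an obstacle --- is that the hypothesis controls only the positive part of $\log\lVert X_1\rVert$ (products may be nearly singular), so one must use the version of Kingman's theorem that permits the subadditive limit to equal $-\infty$ and does not claim $L^1$ convergence; with that version in hand the argument is entirely routine. If a self-contained treatment were wanted, one could instead run Furstenberg and Kesten's original norm estimates, which predate Kingman's theorem, but routing through the subadditive ergodic theorem is the shortest path.
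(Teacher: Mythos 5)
Your argument is correct. Note, however, that the paper does not prove this statement at all: Theorem \ref{thm:furstenberg} is quoted verbatim from Furstenberg--Kesten as a known result (the paper only uses the consequence $E=\inf_n \frac1n \E\log\lVert X_1\cdots X_n\rVert$ coming from subadditivity), so there is no in-paper proof to compare against. Your route through Kingman's subadditive ergodic theorem is the standard modern derivation: the reduction to the canonical Bernoulli shift, the submultiplicativity giving $F_{n+m}\le F_n+F_m\circ\theta^n$, the observation that the hypothesis is exactly $F_1^+\in L^1$ (whence $\E F_n^+\le n\,\E(\log\lVert X_1\rVert)^+<\infty$ for all $n$), and the appeal to the version of Kingman's theorem that allows the limit to be $-\infty$ and concludes via ergodicity that the a.s.\ limit is the constant $E=\inf_n a_n/n$. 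You correctly isolate the one delicate point --- only the positive part of $\log\lVert X_1\rVert$ is controlled, and products can be singular --- and handle it by using the appropriate version of the theorem. The only historical caveat is the one you already flag: Kingman's theorem postdates the 1960 paper, so this is not Furstenberg and Kesten's original argument (which proceeds by direct norm estimates), but as a proof of the stated theorem it is complete and correct.
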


Especially, the subadditivity of $\E \log \lVert X_1 \cdots X_{n-1} X_n\rVert$ implies that $E = \lim_{n \to \infty} {1 \over n} \E \log \lVert X_1 \cdots X_{n-1} X_n \rVert =\inf_{n} {1 \over n} \E \log \lVert X_1 \cdots X_{n-1} X_n \rVert $.
This allows us to approximate or provide an explicit upper bound for $h(\w)$.

As an application of above results, we estimate the hyperbolic volumes of random mapping tori arising from a typical Thurston's construction. In \cite{kojima2018normalized}, Kojima and McShane fixed a pseudo-Anosov map $\varphi$ and investigated the hyperbolic volume of a mapping torus with monodromy $\varphi$. It represents the dynamics of the iterated sequence $\varphi, \varphi^2, \ldots, \varphi^n, \ldots$.

In order to translate this into a random version, we replace the iterate $\varphi^{n}$ with the element $\w_{n}$ of a random walk associated with a typical Thurston's construction. Thanks to Theorem \ref{theorem:C}, such elements are eventually pseudo-Anosov almost surely, and thus one can discuss the hyperbolic volume for the mapping torus with monodromy $\w_n$ for large $n$. Consequently, our random analogy of Kojima and McShane's estimate follows:

\begin{restatable}{theorem}{mappingtori}
	Let $(A, B)$ be a filling pair of multicurves on $S$ such that $\langle T_A, T_B \rangle \cong F_{2}$ and $\nu : \Mod(S) \to [0, 1]$ be a non-elementary probability measure with $\supp \nu \subseteq \langle T_A, T_B \rangle$ and $\rk \langle \supp \nu \rangle <  \infty$. Suppose also that $\nu$ has finite first moment with respect to the Teichm\"uller metric. Then $$\limsup_{n \to \infty} {1 \over n} \vol(M_{\w_n}) \le -3\pi\chi(S) \cdot h(\w)$$ for $\Prob-a.e.$ $\w \in \Mod(S)^{\N}$ where $\chi(S)$ is the Euler characteristic of $S$.
\end{restatable}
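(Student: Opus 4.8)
The plan is to combine Theorem~\ref{theorem:C} with the known upper bound of Kojima--McShane relating the hyperbolic volume of a mapping torus to the normalized entropy of its pseudo-Anosov monodromy. First I would recall that, by Kojima--McShane \cite{kojima2018normalized}, for any pseudo-Anosov $\varphi$ on $S$ the mapping torus $M_\varphi$ is hyperbolic and satisfies $\vol(M_\varphi) \le -3\pi\chi(S) \cdot \log\lambda_\varphi = -3\pi\chi(S) \cdot h(\varphi)$; here I am using that the topological entropy $h(\varphi)$ of a pseudo-Anosov equals $\log\lambda_\varphi$ as noted in the introduction. This inequality is not asymptotic and holds for every pseudo-Anosov, so the only issue is to make sense of the left-hand side along a random walk and to control the limsup.

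Next, since $(A,B)$ is a filling pair with $\langle T_A, T_B \rangle \cong F_2$ and $\nu$ is non-elementary with $\rk\langle \supp\nu\rangle < \infty$, Theorem~\ref{theorem:C} applies: for $\Prob$-a.e.\ sample path $\w$ there is $N = N(\w) \in \N$ with $\w_n$ pseudo-Anosov for all $n > N$. Fix such a sample path. For $n > N$ the mapping torus $M_{\w_n}$ is then a well-defined hyperbolic $3$-manifold, and applying the Kojima--McShane bound to $\varphi = \w_n$ gives
\[
\frac{1}{n}\vol(M_{\w_n}) \le -3\pi\chi(S)\cdot\frac{1}{n}\log\lambda_{\w_n}
\]
for all $n > N$. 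Now I invoke the stretch-factor asymptotics: the finite first moment hypothesis lets us apply Theorem~\ref{thm:DH} (in the form guaranteed by the finite second moment version, or more precisely the finite-first-moment conclusion of Masai together with Dahmani--Horbez), so that $\frac{1}{n}\log\lambda_{\w_n} \to h(\w)$ for $\Prob$-a.e.\ $\w$. Taking $\limsup_{n\to\infty}$ of the displayed inequality and using $-3\pi\chi(S) > 0$ (as $\chi(S) < 0$) yields $\limsup_{n\to\infty}\frac{1}{n}\vol(M_{\w_n}) \le -3\pi\chi(S)\cdot h(\w)$ on the full-measure set where both Theorem~\ref{theorem:C} and the stretch-factor convergence hold.

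The only genuine subtlety — and the step I would be most careful about — is the interface between the moment hypotheses: Theorem~\ref{theorem:DH}/the displayed spectral theorem above is stated under a finite \emph{second} moment assumption (or finite support), whereas here we only assume finite \emph{first} moment. The resolution is that the \emph{convergence} $\frac1n\log\lambda_{\w_n}\to h(\w)$, once we already know $\w_n$ is eventually pseudo-Anosov from Theorem~\ref{theorem:C} (which needs no moment condition), follows from the finite first moment hypothesis: in the Thurston setting $\log\lambda_{\w_n} = \log\|\rho(\w_n)\| = d_\T(\X, \w_n\cdot\X)$ for the special point $\X \in f(\mathbb{H}^2)$, and finite first moment is exactly the hypothesis under which the subadditive ergodic theorem (Theorem~\ref{thm:furstenberg}, via Kingman, together with Masai's identification $L_\T = h(\w)$) gives $\frac1n d_\T(\X,\w_n\cdot\X) \to L_\T = h(\w)$ almost surely. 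So no second moment is needed for this application. Everything else is a routine passage to the $\limsup$, and the positivity of the constant $-3\pi\chi(S)$ preserves the direction of the inequality.
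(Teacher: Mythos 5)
Your overall plan matches the paper's: use the eventually--pseudo-Anosov result (Theorem~\ref{theorem:C}), the Kojima--McShane volume bound, and identify the growth rate with $h(\w)$ via Masai's theorem. However, there is a genuine error in the justification of your final step.

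You assert that for the special point $\X \in f(\mathbb{H}^2)$ one has $\log\lambda_{\w_n} = \log\|\rho(\w_n)\| = d_{\T}(\X,\w_n\cdot\X)$, and from this conclude the almost-sure convergence $\tfrac1n\log\lambda_{\w_n}\to h(\w)$ under only a finite first moment hypothesis. The claimed equality is false: the stretch factor $\lambda_{\w_n}$ is the largest \emph{eigenvalue} of $\rho(\w_n)$, whereas $\|\rho(\w_n)\|$ is the operator norm (largest singular value), and for a non-normal hyperbolic matrix in $\mathrm{SL}(2,\R)$ the spectral radius is strictly smaller than the operator norm. Equivalently, $\log\lambda_{\w_n}$ is the \emph{translation length} of $\w_n$ on $\T(S)$ (Theorem~\ref{thm:bers}), while $d_{\T}(\X,\w_n\cdot\X)$ is a pointwise displacement; only the inequality $\log\lambda_{\w_n}\le d_{\T}(\X,\w_n\cdot\X)$ holds. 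Consequently the almost-sure convergence of $\tfrac1n\log\lambda_{\w_n}$ is not available to you under a first moment hypothesis — in the paper this is only established under finite \emph{second} moment (Theorem~\hyperref[thm:secondmoment]{A}), and whether it holds under first moment alone is explicitly listed as an open question.

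Fortunately the one-sided inequality is all that is needed, and it is exactly what the paper uses: by Theorem~\ref{thm:bers}, $\log\lambda_{\w_n}\le d_{\T}(\mathcal{Y},\w_n\cdot\mathcal{Y})$ for any $\mathcal{Y}\in\T(S)$, hence
\[
\frac{1}{n}\vol(M_{\w_n}) \;\le\; -3\pi\chi(S)\cdot\frac{1}{n}\log\lambda_{\w_n} \;\le\; -3\pi\chi(S)\cdot\frac{1}{n}d_{\T}(\mathcal{Y},\w_n\cdot\mathcal{Y}),
\]
and by Theorem~\ref{thm:masai} the right-hand side converges almost surely to $-3\pi\chi(S)\cdot h(\w)$ under finite first moment. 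Taking $\limsup$ gives the statement. So the result and most of your route are correct, but you should replace the false equality and the claimed full stretch-factor convergence with the one-sided bound from Bers' theorem, which is sufficient and correctly calibrated to the first-moment hypothesis.
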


This estimation supports the point of view that the topological entropy of a random walk shares common characteristics with the entropy of a single pseudo-Anosov. Indeed, Kojima and McShane's estimate asserts that $${1 \over n} \vol(M_{\varphi^n}) \le -3\pi \chi(S) \cdot h(\varphi)$$ where $h(\varphi)$ is the topological entropy of a  pseudo-Anosov $\varphi$, which can be regarded as an interchange of $h(\w)$ and $h(\varphi)$. See Remark \ref{rem:kojimamcshane} for details.

As another application, we establish an estimation for stretch factors obtained by Thurston's construction; note that the estimation is non-probabilistic.

\begin{restatable}{theorem}{distribution} \label{theorem:E}
	Let $(A, B)$ be a filling pair of multicurves on $S$. For a finite generating set $\Lambda$ of $\langle T_A, T_B \rangle$, let $| \cdot |_{\Lambda}$ be a word norm with respect to $\Lambda$. Then there exist constants $L > 0$ and $N$ such that $$\log \lambda_{\varphi} < L |\varphi|_{\Lambda}$$ for all pseudo-Anosov $\varphi \in \langle T_A, T_B \rangle$ with $|\varphi|_{\Lambda} > N$.

%Let $(A, B)$ be a filling pair of multicurves on $S$ and $\varepsilon > 0$. Then $$\exp\left(\frac{1}{4} \log |\varphi| \right)< \lambda_{\varphi} < e^{K|\varphi|}$$ for any pseudo-Anosov $\varphi \in \langle T_A, T_B \rangle$. Here, $\lambda_{\varphi}$ is the stretch factor of $\varphi$ and $|\varphi|$ is its reduced word norm with respect to $T_A$ and $T_B$.
\end{restatable}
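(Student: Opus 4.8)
The plan is to reduce the bound to submultiplicativity of the operator norm under the Thurston representation. Recall from above that Thurston's construction furnishes a representation $\rho\colon\langle T_A, T_B \rangle\to\PR$ together with a point $\X\in f(\mathbb{H}^{2})$ satisfying $\log\|\rho(w)\|=d_\T(\X,w\cdot\X)$ for every $w\in\langle T_A, T_B \rangle$. This is available for an arbitrary filling pair $(A,B)$, so the hypothesis $\langle T_A, T_B \rangle\cong F_2$ is irrelevant here; the only structural fact used is that $\langle T_A, T_B \rangle$ is finitely generated, which is automatic since $T_A$ and $T_B$ generate it.

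The core estimate has two ingredients. First, for any pseudo-Anosov $\varphi\in\langle T_A, T_B \rangle$,
$$\log\lambda_\varphi=\inf_{Y\in\T(S)}d_\T(Y,\varphi\cdot Y)\le d_\T(\X,\varphi\cdot\X)=\log\|\rho(\varphi)\|,$$
the first equality being the classical identification of $\log\lambda_\varphi$ with the Teichm\"uller translation length of $\varphi$ (one can bypass this by instead applying $\log\lambda_\psi\le d_\T(\X,\psi\cdot\X)$ to $\psi=\varphi^n$, using $d_\T(\X,\varphi^n\cdot\X)=\log\|\rho(\varphi)^n\|\le n\log\|\rho(\varphi)\|$, and dividing by $n$). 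Second, writing $\varphi=s_1\cdots s_k$ as a geodesic word with $s_i\in\Lambda\cup\Lambda^{-1}$ and $k=|\varphi|_\Lambda$, submultiplicativity of the operator norm yields $\|\rho(\varphi)\|\le\prod_{i=1}^{k}\|\rho(s_i)\|\le M^{|\varphi|_\Lambda}$, where $M:=\max_{s\in\Lambda\cup\Lambda^{-1}}\|\rho(s)\|$. Combining the two gives $\log\lambda_\varphi\le|\varphi|_\Lambda\log M$.

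It remains only to see that $\log M>0$, which also ensures the constant produced is positive. If instead $\|\rho(s)\|=1$ for every $s\in\Lambda$, then $d_\T(\X,s\cdot\X)=0$, i.e. $s\cdot\X=\X$, for each generator, so all of $\langle T_A, T_B \rangle$ fixes $\X$ and every element of the image of $\rho$ is elliptic or trivial; this contradicts a basic feature of Thurston's construction, namely that $\rho(T_A)$ is a nontrivial parabolic isometry of $f(\mathbb{H}^{2})$ (see \cite{thurston1988geometry, leininger2004groups}). Hence $\log M>0$, and we may take $L$ to be any constant with $L>\log M$ (say $L=\log M+1$) and $N=1$: for every pseudo-Anosov $\varphi\in\langle T_A, T_B \rangle$ with $|\varphi|_\Lambda>N$ we get $\log\lambda_\varphi\le|\varphi|_\Lambda\log M<L|\varphi|_\Lambda$.

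I do not anticipate a real obstacle: the entire content is the geometry of Thurston's construction — the isometrically embedded Teichm\"uller disk and the identity $\log\|\rho(w)\|=d_\T(\X,w\cdot\X)$ — which is already in hand. The two points requiring some care are the normalization in the first ingredient (so that the comparison constant between $\log\lambda_\varphi$ and $\log\|\rho(\varphi)\|$ is exactly $1$) and the positivity of $\log M$ just verified. For the sharpest $L$ obtainable this way one may take any value strictly larger than $\max_{s\in\Lambda\cup\Lambda^{-1}}\log\|\rho(s)\|$, a quantity computable directly from the matrices of Thurston's construction.
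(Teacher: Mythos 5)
Your proof is correct, but it takes a genuinely different route from the paper's. The paper derives Theorem E from the probabilistic machinery it has been building: it applies the Benoist--Quint exponential concentration inequalities (proved via Azuma's inequality for finitely supported measures) to get $\Prob(|\log\lambda_{\w_n}-nL_\T|>6\varepsilon n)<p^n$ for large $n$, where $p=\min\nu$, and then exploits the fact that every atom of the step-$n$ distribution of a finitely supported walk has mass at least $p^n$, so the high-probability bound becomes a deterministic bound on every group element of word length $n$. This yields a constant $L=L_\T+6\varepsilon$ that is tied to the drift $L_\T$, which (as the paper argues via a clever choice of pivot point $s\cdot\X_0$) can be substantially smaller than the worst-case displacement of a generator. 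Your argument is instead the direct submultiplicative/triangle-inequality estimate $\log\lambda_\varphi\le d_\T(\X,\varphi\cdot\X)=\log\|\rho(\varphi)\|\le|\varphi|_\Lambda\cdot\max_s\log\|\rho(s)\|$, which the paper itself mentions in the surrounding discussion as the comparison bound $nM_1$. What your version buys: it is elementary, it needs no measure, and it works with $N=0$ rather than just for ``sufficiently large'' word length; your verification that $\max_s\|\rho(s)\|>1$ (since otherwise $\rho$ would land in a point stabilizer, contradicting that $\rho(T_A)$ is a nontrivial parabolic) is also correct. What the paper's probabilistic proof buys: a potentially sharper constant (the paper explicitly constructs generating sets where the drift-based constant beats $M_1$), together with a matching lower bound $n(L_\T-6\varepsilon)$ when that quantity is positive. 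Both are complete proofs of the stated theorem.
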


See Subsection \ref{subsec:distribution} for the detailed nature of $L$. In particular, for some choice of $\Lambda$, $L$ can be much smaller than the first moments of measures on $\Lambda$. In contrast, $L$ is always greater than the drift $L_{\T}$ of a measure on $\Lambda$. 

%%
%Underlying idea of this application is the observation that for probability measure $\Prob$ on $\Mod(S)^{N}$ associated with the Thurston's construction and any $\Omega \subseteq \Mod(S)^{\N}$ with $\Prob(\Omega) = 1$, we have  $$\bigcup_{N = 0} ^{\infty} \{\w_N : \w = (\w_n) \in \Omega \} = \llangle \supp \nu \rrangle. $$ Due to this observation, it is sufficient to consider $\Prob-a.e.$ random walk $\w = (\w_n) \in \Mod(S)^{\N}$ in order to deal with pseudo-Anosov mapping classes arising from the Thurston's construction. Consequently, our strategy results in non-probabilistic estimation as above.
%%

As stretch factors of pseudo-Anosovs from Thurston's construction are related to Salem number, an algebraic unit $\lambda$ whose all Galois conjugates except $\lambda^{\pm 1}$ lie on the unit circle in $\mathbb{C}$, one can also obtain a consequence regarding Salem numbers. Pankau proved in \cite{pankau2017salem} that every Salem number has a power, which is the stretch factor of a pseudo-Anosov arising from Thurston's construction. Regarding this, one can ask how large the power should be. We provide one estimation here.

\begin{restatable}{theorem}{salempower}
	Let $\lambda$ be a Salem number and suppose that $\lambda^k$ is the stretch factor of a pseudo-Anosov $\varphi$ obtained from Thurston's construction with $\rho: \langle T_A, T_B \rangle \to \PR$. Then$$ k \le {K |\varphi| \over \log \lambda}$$ where $|\varphi|$ is the cyclically reduced word norm of $\varphi$ in terms of $T_A^{\pm}$ and $T_B^{\pm}$ and $K = \log {\sqrt{\mu} + \sqrt{4 + \mu} \over 2}.$
	
	Moreover, when $\langle T_A, T_B \rangle \cong F_2$, we further have $$\frac{\log |\varphi|}{4 \log \lambda} \le k \le {K |\varphi| \over \log \lambda}.$$
\end{restatable}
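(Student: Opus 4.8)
The plan is to work inside $\PR$ via the Thurston representation $\rho\colon\langle T_A,T_B\rangle\to\PR$ and the identity $\log\|\rho(w)\|=d_{\T}(\X,w\cdot\X)$, exploiting that a hyperbolic element of $\PR$ carries two invariants: its operator norm (which records the orbit displacement $d_{\T}(\X,\cdot\,\X)$) and its spectral radius (which records the stretch factor). Since a pseudo-Anosov $\varphi$ from Thurston's construction acts on the curvature $-4$ Teichm\"uller disk $f(\mathbb{H}^{2})$ as a hyperbolic isometry, $\log\lambda_{\varphi}=\lim_{n}\tfrac1n d_{\T}(\X,\varphi^{n}\cdot\X)=\lim_{n}\tfrac1n\log\|\rho(\varphi)^{n}\|$ equals $\log$ of the spectral radius of $\rho(\varphi)$; and a direct computation of the singular values of $\rho(T_A)$ gives $e^{K}=\|\rho(T_A^{\pm1})\|=\|\rho(T_B^{\pm1})\|$, so $K=\max\{\log\|\rho(s)\|:s\in\{T_A^{\pm1},T_B^{\pm1}\}\}$.

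For the upper bound, write the conjugacy class of $\varphi$ as a cyclically reduced word $w=s_{1}\cdots s_{N}$ in the letters $s_{i}\in\{T_A^{\pm1},T_B^{\pm1}\}$ with $N=|\varphi|$. Since the spectral radius is a conjugacy invariant dominated by the operator norm, which is submultiplicative, $\lambda^{k}=\lambda_{\varphi}=(\text{spectral radius of }\rho(w))\le\|\rho(w)\|\le\prod_{i=1}^{N}\|\rho(s_{i})\|=e^{K|\varphi|}$; taking logarithms and rearranging gives $k\le K|\varphi|/\log\lambda$. Only the fact that $\varphi$ is conjugate to a word of length $\le|\varphi|$ is used here, so $\langle T_A,T_B\rangle\cong F_2$ is not needed for this part.

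For the lower bound it suffices to prove $\lambda_{\varphi}^{4}\ge|\varphi|$, since $\log\lambda_{\varphi}=k\log\lambda$. The structural input is that, when $\langle T_A,T_B\rangle\cong F_2$, the map $\rho$ is injective — any element of $\ker\rho$ would fix a point of $f(\mathbb{H}^{2})$, hence be of finite order, while $F_2$ is torsion-free — so $\Gamma:=\rho(\langle T_A,T_B\rangle)$ is free on the two parabolics $\rho(T_A)$ (fixing $\infty$) and $\rho(T_B)$ (fixing $0$); moreover $\Gamma$ is discrete, being contained in the Veech group of the flat structure, and this forces $\mu\ge4$ (otherwise $\rho(T_AT_B)$ would be a nontrivial finite-order element, contradicting injectivity and torsion-freeness). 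Now write the conjugacy class of $\varphi$ as the genuinely alternating cyclically reduced word $T_A^{a_{1}}T_B^{b_{1}}\cdots T_A^{a_{r}}T_B^{b_{r}}$ with all $a_{i},b_{i}\ne0$ and $\sum_{i}(|a_{i}|+|b_{i}|)=|\varphi|$ (alternation uses that $\varphi$ is pseudo-Anosov, a power of a single multitwist being reducible). Fix a $\Gamma$-invariant family of pairwise disjoint horoballs based at the $\Gamma$-orbits of $0$ and $\infty$. The axis of $\rho(\varphi)$ must enter $r$ distinct horoballs of each type, and the syllable $T_A^{a_{i}}$ (respectively $T_B^{b_{i}}$) forces it to penetrate the corresponding horoball to a depth growing like $\log|a_{i}|$ (respectively $\log|b_{i}|$); summing the contributions gives a lower bound for the Teichm\"uller translation length $\log\lambda_{\varphi}$ of the form $c_{1}r+c_{2}\log\prod_{i}(|a_{i}|\,|b_{i}|)$ with $c_{1},c_{2}>0$, and since $\max\!\bigl(r,\ \log\prod_{i}(|a_{i}|\,|b_{i}|)\bigr)\gtrsim\log|\varphi|$ one obtains, after tracking the constants (which are controlled because $\mu\ge4$) and checking the finitely many short $\varphi$ by hand, $\log\lambda_{\varphi}\ge\tfrac14\log|\varphi|$. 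Rewriting $\log\lambda_{\varphi}=k\log\lambda$ yields $k\ge\log|\varphi|/(4\log\lambda)$.

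The step I expect to cost the most is this translation-length estimate: turning ``the cyclically reduced syllables force deep horoball penetration'' into an inequality with constants $c_{1},c_{2}$ good enough to land on the clean exponent $4$. This is precisely where the discrete free Fuchsian structure of $\Gamma$ — hence the hypothesis $\langle T_A,T_B\rangle\cong F_2$ — is essential, and it is also where one must be careful at $\mu=4$, i.e.\ the thrice-punctured sphere group $\Gamma(2)$, where the horoballs are ``tight''; for $\mu>4$ the larger shears only help. The remaining points — the singular-value computation identifying $K$, and disposing of the finitely many $\varphi$ with $|\varphi|$ too small for the asymptotic estimate — are routine.
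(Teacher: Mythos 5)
Your upper bound is, up to notation, the paper's argument: you use submultiplicativity of the operator norm and the fact that spectral radius is conjugacy-invariant and dominated by the norm, which is exactly the triangle inequality
\[
\log\lambda_\varphi \le d_{\mathbb{H}^2}(\X_0,\rho(\varphi)\cdot\X_0) \le \sum_i d_{\mathbb{H}^2}(\X_0,\rho(s_i)\cdot\X_0) = K|\varphi|
\]
for a cyclically reduced representative, with $K=\log\|\rho(T_A^{\pm1})\|=\log\frac{\sqrt\mu+\sqrt{4+\mu}}{2}$ checked by the singular-value computation, just as in the discussion preceding Theorem \ref{thm:constantestimate}. And you correctly recognize that the theorem reduces to combining this with the lower bound of Proposition \ref{prop:lowerboundtranslationlength}, i.e.\ $\log\lambda_\varphi\ge\frac14\log|\varphi|$.

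For that lower bound, however, you take a genuinely different route, and it has a real gap. The paper's proof is combinatorial: it follows the invariant axis of $\rho(\varphi)$ through the tiling of $\mathbb{H}^2$ by $\Gamma$-translates of the fundamental domain, records for each consecutive pair of crossed edges whether the move is a left turn, right turn, or diagonal, extracts a subsequence $\mathcal{J}'$ of ``transition'' indices of period $2|\varphi|$, bounds $d_{\mathbb{H}^2}(e_{j_k},e_{j_{k+1}})$ below by explicit hyperbolic formulas, and then uses the elementary inequality $\sum_k\log(j_{k+1}-j_k)\ge\log\bigl(\sum_k(j_{k+1}-j_k)\bigr)$ to get translation length $\ge\frac14\log(2|\varphi|)$. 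The cleanliness of the exponent $4$ is an artifact of exactly this bookkeeping. Your horoball-penetration sketch — split the cyclically reduced word into alternating syllables $T_A^{a_i}T_B^{b_i}$ and charge each syllable a contribution of about $\log|a_i|$ resp.\ $\log|b_i|$ from cusp excursions — is plausible in spirit, but you yourself flag the crux and leave it undone: making the constants $c_1,c_2$ land on $\frac14$ uniformly is not routine. Moreover, for $\mu>4$ the quotient $\mathbb{H}^2/\Gamma$ is a twice-punctured \emph{disk} of infinite area, not a finite-volume cusped surface; the funnel means the axis is not confined to cusp neighborhoods, and the assertion that ``larger shears only help'' is not automatic — a wider parabolic step can keep the axis farther from the parabolic fixed point for the same exponent $a_i$, so the penetration estimate needs a separate check. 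The paper's tiling argument avoids this entirely: it measures distances between fundamental-domain edges directly, and those estimates monotonically improve as $\mu$ grows past $4$. So: right target, same upper bound, but the lower-bound route you propose needs the horoball-depth inequality actually proved with the exponent $\frac14$ and with the $\mu>4$ geometry handled; as written, that step is a sketch rather than a proof.
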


The paper is organized as follows. In Section \ref{sec:thurstonconstruction}, we review Thurston's construction of pseudo-Anosov mapping classes. We present Leininger's characterization in \cite{leininger2004groups} of filling multicurves $A$ and $B$ whose multitwists generate a free group $F_2$ of rank 2 and its typicality. We also classify elements in $\langle T_A, T_B \rangle \cong F_2$ using the representation obtained from Thurston's construction.

In Section \ref{sec:asympbehav}, we investigate the dynamical properties arising from (typical) Thurston's construction. After reviewing the theory of topological entropy, we present a spectral theorem of Dahmani and Horbez. For the sake of completeness, we prove a version of the theorem asserted in \cite{dahmani2018spectral} in the setting of Thurston's construction. We also investigate the eventually pseudo-Anosov behavior of random walks without moment condition.

%In Section \ref{sec:asympbehav}, we investigate dynamical properties of Thurston's construction. We first introduce the topological entropy of a pseudo-Anosov and of a random walk on the mapping class group. We then observe how it is related to the stretch factors of pseudo-Anosovs from Thurston's construction, via the previous work of Dahmani and Horbez, \cite{dahmani2018spectral}. Dahmani and Horbez remarked the spectral theorem for a probability measure with a finite second moment with respect to the Teichm\"uller metric, and we provide the detailed proof of it in the case of Thurston's construction. We also prove that almost every random walk associated with a typical Thurston's construction is eventually pseudo-Anosov, without any restriction on the moment, which is one of the main results of our work.

Section \ref{sec:appmat} deals with applications of our results. Following Kojima and McShane, we consider the hyperbolic volume of a mapping torus with pseudo-Anosov monodromy and stretch factors from Thurston's construction. While our strategy relies on probabilistic tools, we successfully obtain a non-probabilistic estimate regarding the stretch factors.

Let us finish the introduction by suggesting further questions: \begin{enumerate}
	\item Is there a generalization of Thurston's construction involving $n$ multicurves $A_{1}, \ldots, A_{n}$ so that $\langle T_{A_{1}}, \ldots, T_{A_{n}}\rangle \cong F_{n}$ in a typical case?

	\item On which subgroup of $\Mod(S)$ almost every random walk associated with it is eventually pseudo-Anosov, as in Theorem \ref{theorem:C}?
	
	\item Does the spectral theorem hold under the finite $p$-th moment condition with respect to the Teichm\"uller metric, for $1 \le p < 2$? Or, can we provide the threshold for $p$?
\end{enumerate}

\subsection*{Acknowledgments}
We truly appreciate Mladen Bestvina, Ilya Gekhtman, Camille Horbez, Paul Jung, Wanmo Kang, Sang-hyun Kim, Hidetoshi Masai, Bram Petri, and Giulio Tiozzo for fruitful conversations. Finally we thank the anonymous referees for their valuable comments which improved the organization of the paper a lot. 
The first and second authors were partially supported by Samsung Science \& Technology Foundation grant No. SSTF-BA1702-01. 
The third author was supported by KAIST Undergraduate Research Participation
program.

%%%%%%%%%%%%%%%%%%%%%%%%%%%%%%%%%%%%%%%%%%%%%%%%
%
%					Thurston's construction 
%
%%%%%%%%%%%%%%%%%%%%%%%%%%%%%%%%%%%%%%%%%%%%%%%%

\medskip
\section{Thurston's construction}	\label{sec:thurstonconstruction}

\subsection{Quick review of Thurston's construction}

In this section, we review Thurston's construction of pseudo-Anosov mapping classes using multitwists along filling multicurves. Throughout the paper, every surface is a  closed orientable connected surface of genus $g > 1$; every simple closed curve is essential, i.e., not homotopic to a single point, unless otherwise stated.

\begin{definition}[Filling multicurves]
	A \emph{multicurve} $A$ on a surface $S$ is a finite set of disjoint simple closed curves on $S$. A pair $(A, B)$ of multicurves $A$ and $B$ is \emph{filling} if $S \setminus \bigcup_{\gamma \in A \cup B} \gamma$ is a disjoint union of open disks.
\end{definition}

Note that in the above definition of a multicurve, a multicurve is allowed to have two isotopic simple closed curves as distinct elements.

\begin{definition}[Multitwist]
	For a multicurve $A$ on a surface $S$, the \emph{multitwist} $T_A$ along $A$ is the product $$T_A := \prod _{\alpha \in A} T_{\alpha}$$ where $T_{\alpha}$ is the Dehn twist along a simple closed curve $\alpha \in A$.
\end{definition}

Given a filling pair $(A, B)$ of multicurves, Thurston's construction provides a $\PR$-representation of $\langle T_A, T_B \rangle$. We can then apply the classification of elements in $\PR$ to deduce that in $\langle T_A, T_B \rangle$.

We now explain Thurston's construction. The first ingredient is as follows:

\begin{lem}[Perron--Frobenius]
	Let $M$ be an $n \times n$ matrix with integer entries. Assume $M$ is primitive, i.e., $M$ is nonnegative and $M^m$ is positive for some $m \in \N$. Then $M$ has a unique nonnegative unit eigenvector $v$. Moreover, $v$ is positive and has a positive eigenvalue, called the \emph{Perron--Frobenius eigenvalue}, which is larger in absolute value than all other eigenvalues.
\end{lem}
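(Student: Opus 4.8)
The plan is to deduce the statement from the Brouwer fixed point theorem together with a careful analysis of the equality case in the triangle inequality, the latter being where primitivity is used in an essential way.

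First I would establish the existence of a positive eigenvector. Let $\Delta = \{x \in \R^n : x_i \ge 0,\ \sum_i x_i = 1\}$ denote the standard simplex. For $x \in \Delta$ we have $Mx \ge 0$, and $Mx \ne 0$: otherwise $M^m x = M^{m-1}(Mx) = 0$, contradicting $M^m x > 0$ (which holds since $x$ has a positive coordinate and $M^m$ is positive). Hence $\|Mx\|_1 > 0$, so the map $f \colon \Delta \to \Delta$, $f(x) = Mx / \|Mx\|_1$, is well defined and continuous; since $\Delta$ is compact and convex, Brouwer produces a fixed point $v \in \Delta$, i.e.\ $Mv = \lambda v$ with $\lambda = \|Mv\|_1 > 0$. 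From $M^m v = \lambda^m v > 0$ and $\lambda > 0$ we get $v > 0$ coordinatewise. Applying the same argument to $M^\top$ (also primitive, since $(M^\top)^m = (M^m)^\top > 0$) yields a positive row vector $u > 0$ with $uM = \sigma u$ and $\sigma > 0$; then $\sigma(uv) = uMv = \lambda(uv)$ with $uv > 0$ forces $\sigma = \lambda$, so $u$ is a positive left eigenvector for the same value $\lambda$. (Alternatively, existence can be obtained without Brouwer from the Collatz--Wielandt formula $\lambda = \max_{x \in \Delta} \min_{i : x_i > 0} (Mx)_i / x_i$, the maximum being attained by a compactness argument that again uses positivity of $M^m$.)

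Next I would bound every eigenvalue in modulus. If $Mw = \mu w$ with $w \in \mathbb{C}^n \setminus \{0\}$, then taking coordinatewise absolute values and using $M \ge 0$ gives $|\mu|\,|w| = |Mw| \le M|w|$; left-multiplying by $u > 0$ yields $|\mu|\,(u|w|) \le \lambda\,(u|w|)$ with $u|w| > 0$, hence $|\mu| \le \lambda$. For the strict inequality when $\mu \ne \lambda$, suppose $|\mu| = \lambda$. Then $u\big(M|w| - \lambda|w|\big) = 0$ with $M|w| - \lambda|w| \ge 0$ and $u > 0$ force $M|w| = \lambda|w|$; thus $M^m|w| = \lambda^m|w| > 0$, so $|w| > 0$ coordinatewise. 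Now $|\mu|^m\,|w| = |M^m w| \le M^m|w| = \lambda^m|w| = |\mu|^m\,|w|$, so equality holds coordinatewise: for every $i$, $\big|\sum_j (M^m)_{ij} w_j\big| = \sum_j (M^m)_{ij}\,|w_j|$. Since $(M^m)_{ij} > 0$ for all $i,j$ and $w_j \ne 0$ for all $j$, equality in the triangle inequality forces all the complex numbers $w_j$ to share a common argument; hence $w = e^{i\phi}|w|$ for some $\phi \in \R$, and $Mw = \mu w$ reduces to $M|w| = \mu|w|$. Comparing with $M|w| = \lambda|w|$ and $|w| > 0$ gives $\mu = \lambda$, a contradiction; therefore $|\mu| < \lambda$ for every eigenvalue $\mu \ne \lambda$. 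Finally, uniqueness: any $z \ge 0$, $z \ne 0$, with $Mz = \rho z$ satisfies $M^m z = \rho^m z > 0$, so $\rho > 0$ and $z > 0$, whence $\rho(uz) = uMz = \lambda(uz)$ and $uz > 0$ give $\rho = \lambda$; and if $v'$ is another positive $\lambda$-eigenvector, then for $t = \min_i v_i / v'_i$ the vector $v - tv' \ge 0$ is either $0$ or a $\lambda$-eigenvector, but it has a vanishing coordinate and so cannot be coordinatewise positive, forcing $v = tv'$. Normalizing then gives the unique nonnegative unit eigenvector $v$.

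The main obstacle is the strict spectral gap $|\mu| < \lambda$: the argument must invoke primitivity, i.e.\ positivity of $M^m$, precisely at the step analyzing equality in the triangle inequality. Mere nonnegativity — even with irreducibility — is insufficient, as a cyclic permutation matrix has all of its eigenvalues of modulus equal to the Perron value; passing from $M$ to $M^m$ is exactly what rules this out, since a positive matrix admits no such rotational obstruction.
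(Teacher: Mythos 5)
Your proof is correct. The paper states this Perron--Frobenius lemma as a classical ingredient for Thurston's construction and offers no proof of its own, so there is nothing to compare against; your argument --- Brouwer's fixed point theorem on the simplex for existence, a positive left eigenvector of $M^\top$ to pin down the Perron value, the equality case of the triangle inequality applied to the positive matrix $M^m$ for the strict spectral gap, and the $v - tv'$ trick for uniqueness --- is a complete and standard self-contained proof, and it correctly isolates primitivity as the hypothesis that excludes peripheral eigenvalues of modulus $\lambda$ (as in the cyclic permutation matrix example).
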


Let $A = \{\alpha_{1}, \ldots, \alpha_{n}\}$ and $B = \{\beta_{1}, \cdots, \beta_{m}\}$. We define a matrix $N$ by $$N_{jk} = i(\alpha_j, \beta_k)$$ where $i(\cdot, \cdot)$ is the geometric intersection number. It can be shown that $NN^t$ is primitive, and thus by Perron--Frobenius theorem, its Perron--Frobenius eigenvalue $\mu = \mu(A, B)$ is obtained. Together with this observation, Thurston constructed pseudo-Anosov mapping classes in the following theorem.

\begin{thm}[\cite{thurston1988geometry}, {\cite[Chapter 14]{FarbMargalit12}}] \label{thm:thurstonconstruction}
	
	Let $(A, B)$ be a filling pair of multicurves on $S$. For the Perron--Frobenius eigenvalue $\mu = \mu(A, B)$ obtained by the above procedure, there is a representation $\rho : \langle T_A, T_B \rangle \to \PR$ defined by $$T_A \mapsto \begin{bmatrix}
	1 & -\sqrt{\mu} \\ 0 & 1
	\end{bmatrix}\quad T_B \mapsto \begin{bmatrix}
	1 & 0 \\ \sqrt{\mu} & 1
	\end{bmatrix}.$$
	
	The representation $\rho$ has the following properties:
	
	\begin{enumerate}
		\item For $\varphi \in \langle  T_A, T_B \rangle$, $\varphi$ is periodic, reducible, or pseudo-Anosov if $\rho(\varphi)$ is elliptic, parabolic, or hyperbolic, respectively.
		
		\item If $\rho(\varphi)$ is parabolic, then $\varphi$ is a multitwist.
		
		\item If $\rho(\varphi)$ is hyperbolic, the largest eigenvalue of $\rho(\varphi)$ is the stretch factor of the pseudo-Anosov mapping class $\varphi$.
	\end{enumerate}
	
\end{thm}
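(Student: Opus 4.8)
The plan is to realize $(A,B)$ as the horizontal and vertical cylinder decompositions of a singular flat metric on $S$ and to let $\rho$ be the derivative (holonomy) homomorphism of the resulting group of affine homeomorphisms; properties (1)--(3) then become the standard dynamical trichotomy for affine maps of a flat surface.

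First I would build the flat structure. Let $\mathbf{h}=(h_1,\dots,h_n)$ be the positive Perron--Frobenius eigenvector of $NN^t$ for the eigenvalue $\mu$, and put $\mathbf{v}:=\tfrac1{\sqrt\mu}N^t\mathbf{h}$, a positive eigenvector of $N^tN$ with the same eigenvalue, so that $N\mathbf{v}=\sqrt\mu\,\mathbf{h}$ and $N^t\mathbf{h}=\sqrt\mu\,\mathbf{v}$. For each point $p\in\alpha_j\cap\beta_k$ take a Euclidean rectangle $R_p$ of width $v_k$ and height $h_j$, with $\alpha_j$ running horizontally and $\beta_k$ vertically through its center, and glue the $R_p$ along the combinatorial pattern in which the curves of $A$ and $B$ meet. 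This produces a closed surface carrying a quadratic differential $q$, homeomorphic to $S$, in which the curves of $A$ are the cores of a horizontal cylinder decomposition, the curves of $B$ the cores of a vertical one, and a complementary $2k$-gon of $A\cup B$ contributes a cone point of angle $k\pi$. The place where Perron--Frobenius enters is the modulus computation: the horizontal cylinder $C_j$ around $\alpha_j$ has circumference $\sum_k N_{jk}v_k=(N\mathbf{v})_j=\sqrt\mu\,h_j$ and height $h_j$, hence circumference-to-height ratio $\sqrt\mu$ independent of $j$; symmetrically every vertical cylinder has the same ratio.

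Next I would produce $\rho$ and deduce (1)--(3). Because all horizontal cylinders have equal modulus, the twists $T_{\alpha_j}$ are realized simultaneously by a single affine homeomorphism of $(S,q)$ whose derivative in the horizontal/vertical frame is the shear $\left[\begin{smallmatrix}1 & -\sqrt\mu\\ 0 & 1\end{smallmatrix}\right]$, and this affine map represents the mapping class $T_A$; likewise $T_B$ is represented by an affine map with derivative $\left[\begin{smallmatrix}1 & 0\\ \sqrt\mu & 1\end{smallmatrix}\right]$. Sending an affine homeomorphism to its derivative is a homomorphism into $\PR$, so its restriction to $\langle T_A,T_B\rangle$ sends $T_A,T_B$ to the stated matrices and is the desired $\rho$. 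Now take $\varphi\in\langle T_A,T_B\rangle$ with affine representative $\Phi$ of derivative $\rho(\varphi)$. If $\rho(\varphi)$ is elliptic then a power of $\Phi$ has rotational derivative after a linear change of the flat coordinates, hence is an isometry of a closed flat surface homeomorphic to $S$; since such isometry groups are finite, $\varphi$ is periodic. If $\rho(\varphi)$ is hyperbolic with eigenvalues $\lambda^{\pm1}$, $\lambda>1$, the straight-line foliations of $(S,q)$ in the two eigendirections are transverse, fill $S$, and are scaled by $\lambda^{\pm1}$ under $\Phi$, so $\varphi$ is pseudo-Anosov with stretch factor the largest eigenvalue $\lambda$ of $\rho(\varphi)$, giving (3). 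If $\rho(\varphi)$ is parabolic fixing a direction $\theta$, then $\Phi$ preserves the direction-$\theta$ foliation, which (since a direction fixed by a parabolic in a Veech group is completely periodic) decomposes $(S,q)$ into cylinders that $\Phi$ permutes trivially and shears; hence $\varphi$ is a product of powers of Dehn twists about disjoint curves, i.e.\ a multitwist, and in particular reducible, giving (1) and (2).

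The main obstacle --- and the substance of the theorem --- is twofold: showing via the Perron--Frobenius eigenvector that all horizontal (resp.\ vertical) cylinders have equal modulus, which is exactly what promotes the Dehn twist on each separate cylinder to one globally affine map and makes $\rho$ well defined; and, in the parabolic case, invoking complete periodicity of parabolic directions to pass from ``$\rho(\varphi)$ parabolic'' to ``$\varphi$ a multitwist''. The remaining points --- that the gluing closes up to a surface homeomorphic to $S$, the elementary trichotomy in $\PR$, and the identification of eigendirections with invariant measured foliations --- are routine once the flat structure is in hand; full details are in \cite{thurston1988geometry} and \cite[Chapter~14]{FarbMargalit12}.
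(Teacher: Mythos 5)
The paper does not prove this theorem: it is stated as a quoted background result with citations to Thurston and to Farb--Margalit, so there is no internal argument to compare yours against. Your outline is the standard proof from those sources, and its load-bearing parts are correct: the relations $N\mathbf{v}=\sqrt{\mu}\,\mathbf{h}$ and $N^{t}\mathbf{h}=\sqrt{\mu}\,\mathbf{v}$ force every horizontal and every vertical cylinder to have circumference-to-height ratio $\sqrt{\mu}$, which is exactly what assembles the individual Dehn twists into a single affine homeomorphism with the stated derivative; the elliptic case then follows from finiteness of the isometry group of a closed singular flat surface, and the hyperbolic case from the fact that the two eigendirection foliations (whose singularities are the cone points of angle $k\pi$, $k\ge 3$) are literally stretched and contracted by the top eigenvalue.

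Two places are thinner than they should be. The minor one: $\langle T_A,T_B\rangle$ lives in $\Mod(S)$, not in the affine group, so ``restricting'' the derivative homomorphism presupposes that an affine homeomorphism isotopic to the identity has trivial derivative; this is standard but deserves a sentence. The substantive one is your assertion, in the parabolic case, that $\Phi$ ``permutes the cylinders trivially.'' That claim is precisely the content of conclusion (2) beyond mere reducibility, and nothing preceding it implies it: an affine map with parabolic derivative carries each cylinder in the eigendirection to a cylinder of equal height and circumference, but on a general half-translation surface such a map can permute the cylinders nontrivially (compose a multitwist in one periodic direction with a translation automorphism permuting its cylinders), in which case the mapping class is reducible and only a proper power of it is a multitwist. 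Ruling this out in the present setting requires an extra argument --- for instance, using discreteness of the Veech group to see that every parabolic of $\rho(\langle T_A,T_B\rangle)$ is conjugate in that group to a power of $\rho(T_A)$, $\rho(T_B)$, or (when $\operatorname{tr}\rho(T_AT_B)=\pm 2$) $\rho(T_AT_B)$, and then verifying the multitwist conclusion for those specific elements. As written, your sketch establishes (1), (3), and reducibility in the parabolic case, but statement (2) is asserted rather than proved.
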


\subsection{Group-theoretic aspects of Thurston's construction} \label{subsec:groupaspect}

We now connect the group-theoretic aspects of $\langle  T_A, T_B \rangle$ with the topological aspects of multicurves $A$ and $B$.

The crucial observation on Thurston's construction is that $\langle T_A , T_B \rangle \cong F_2$, a free group of rank 2, for a typical configuration of $A$ and $B$. To see the typicality, we first follow the work of Leininger \cite{leininger2004groups} that provides a necessary and sufficient condition for $A$ and $B$ to result in $\langle T_A, T_B \rangle \cong F_2$. We begin by defining a graph associated with multicurves $A$ and $B$:

\begin{definition}[Configuration graph, \cite{leininger2004groups}]
	For a finite set $C$ of simple closed curves on $S$, the \emph{configuration graph} $G(C)$ for $C$ is a graph defined by the following rule: \begin{itemize}
		\item The vertex set of $G(C)$ is $C$ and;
		\item There are $i(c_1, c_2)$ edges between vertices corresponding to $c_1, c_2 \in C$.
	\end{itemize}
\end{definition}

	\begin{figure}[h]
		
	\begin{tikzpicture}[scale=1.7, every node/.style={scale=1}]
	\draw (-2, 0) .. controls (-2, 1) and (-1, 0.5) .. (0, 0.5) .. controls (1, 0.5) and (2, 1) .. (2, 0) .. controls (2, -1) and (1, -0.5) .. (0, -0.5) .. controls (-1, -0.5) and (-2, -1) .. (-2, 0);
	
	\draw (-1.5, 0.1) .. controls (-1.2, -0.1) and (-0.8, -0.1) .. (-0.5, 0.1);
	\draw (1.5, 0.1) .. controls (1.2, -0.1) and (0.8, -0.1) .. (0.5, 0.1);
	
	\draw (-1.2, 0) .. controls (-1.1, 0.1) and (-0.9, 0.1) .. (-0.8, 0);
	\draw (1.2, 0) .. controls (1.1, 0.1) and (0.9, 0.1) .. (0.8, 0);
	
	\draw[red] (-1, -0.05) .. controls (-1.1, -0.3) .. (-1, -0.6);
	\draw[red, dashed] (-1, -0.05) .. controls (-0.9, -0.3) .. (-1, -0.6);
	
	\draw[red, dashed] (1, -0.05) .. controls (1.1, -0.3) .. (1, -0.6);
	\draw[red] (1, -0.05) .. controls (0.9, -0.3) .. (1, -0.6);
	
	\draw[red, dashed] (1, 0.08) .. controls (1.1, 0.3) .. (1, 0.6);
	\draw[red] (1, 0.08) .. controls (0.9, 0.3) .. (1, 0.6);
	
	\draw[red] (0, 0.5) .. controls (-0.2, 0) .. (0, -0.5);
	\draw[red, dashed] (0, 0.5) .. controls (0.2, 0) .. (0, -0.5);
	
	\draw[blue] (-1.7, 0) .. controls (-1.7, 0.7) and (-1, 0.3) .. (0, 0.3) .. controls (1, 0.3) and (1.7, 0.7) .. (1.7, 0) .. controls (1.7, -0.7) and (1, -0.3) .. (0, -0.3) .. controls (-1, -0.3) and (-1.7, -0.7) .. (-1.7, 0);
	
	\begin{scope}[shift={(0, 0.1)}]
	\draw (3, 0) -- (4, 0) -- (5, 0.45);
	\draw (4, 0) -- (5, -0.45);
	\draw (4, 0) .. controls (3.8, -0.25) and (3.8, -0.6) .. (4, -0.85);
	\draw (4, 0) .. controls (4.2, -0.3) and (4.2, -0.5) .. (4, -0.85);
	
	\filldraw[red] (3, 0) circle(1.5pt);
	
	\filldraw[blue] (4, 0) circle(1.5pt);
	
	\filldraw[red] (4, -0.85) circle(1.5pt);
	
	\filldraw[red] (5, 0.45) circle(1.5pt);
	\filldraw[red] (5, -0.45) circle(1.5pt);
	\end{scope}
%	\draw (0, -1.5);
	\end{tikzpicture}
	\caption{Filling multicurves $A$, $B$ on $S$ and the configuration graph $G(A \sqcup B)$.} \label{fig:exofconfigurationgraph}
	\end{figure}
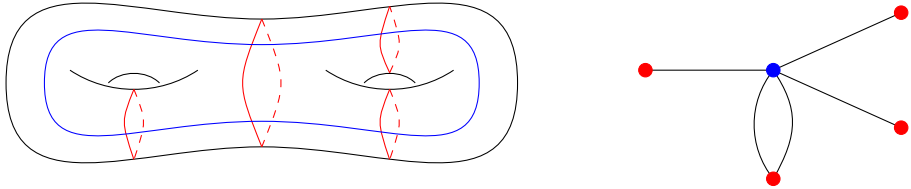

With this definition, Leininger proved the following theorem.

\begin{thm}[Leininger, \cite{leininger2004groups}] \label{thm:leininger}
	For a filling pair $(A, B)$ of multicurves, $$\langle T_A, T_B \rangle \ncong F_2 \Leftrightarrow \begin{matrix} G(A \sqcup B) \mbox{ is one of} \\ \mbox{the graphs in Figure \ref{fig:treelike}}: \end{matrix}$$
	
	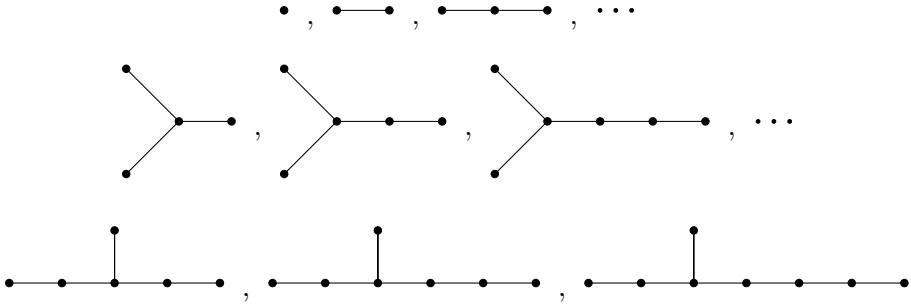
\begin{figure}[h]
		\begin{tikzpicture}[scale=0.7]

		\filldraw (0, 0) circle(2pt);
		
		\draw (0.5, 0) node[below] {$,$};
		
		\draw (1, 0) -- (2, 0);	
		\filldraw (1, 0) circle(2pt);
		\filldraw (2, 0) circle(2pt);
		
		\draw (2.5, 0) node[below] {$, $};
		
		\draw (3, 0) -- (4, 0) -- (5, 0);
		\filldraw (3, 0) circle(2pt);
		\filldraw (4, 0) circle(2pt);
		\filldraw (5, 0) circle(2pt);
		
		\draw (5.5, 0) node[below] {$,$};
		\filldraw (6, 0) circle(1pt);
		\filldraw (6.3, 0) circle(1pt);
		\filldraw (6.6, 0) circle(1pt);
		
	%	\draw (0, 1);
		\draw (0, -0.5);
		
		\end{tikzpicture}
		
		\begin{tikzpicture}[scale=0.7]
		
		\draw (-2, 1) -- (-1, 0) -- (-2, -1);
		\draw (-1, 0) -- (0, 0);
		
		\filldraw (-2, 1) circle(2pt);
		\filldraw (-2, -1) circle(2pt);
		\filldraw (-1, 0) circle(2pt);
		\filldraw (0, 0) circle(2pt);

		\draw (0.5, 0) node[below] {$,$};
		
		\draw (1, 1) -- (2, 0) -- (1, -1);
		\draw (2, 0) -- (3, 0) -- (4, 0);
		
		\filldraw (1, 1) circle(2pt);
		\filldraw (1, -1) circle(2pt);
		\filldraw (2, 0) circle(2pt);
		\filldraw (3, 0) circle(2pt);
		\filldraw (4, 0) circle(2pt);

		\draw (4.5, 0) node[below] {$,$};

		\draw (5, 1) -- (6, 0) -- (5, -1);
		\draw (6, 0) -- (7, 0) -- (8, 0) -- (9, 0);
		
		\filldraw (5, 1) circle(2pt);
		\filldraw (5, -1) circle(2pt);
		\filldraw (6, 0) circle(2pt);
		\filldraw (7, 0) circle(2pt);
		\filldraw (8, 0) circle(2pt);
		\filldraw (9, 0) circle(2pt);

		\draw (9.5, 0) node[below] {$,$};

		\filldraw (10, 0) circle(1pt);
		\filldraw (10.3, 0) circle(1pt);
		\filldraw (10.6, 0) circle(1pt);
		
		\draw (0, -1.5);
		\draw (0, 1.5);
		
		\end{tikzpicture}
		
		\begin{tikzpicture}[scale=0.7]
		
		\draw (0, 0) -- (1, 0) -- (2, 0) -- (2, 1) -- (2, 0) -- (3, 0) -- (4, 0);
		
		\filldraw (0, 0) circle(2pt);
		\filldraw (1, 0) circle(2pt);
		\filldraw (2, 0) circle(2pt);
		\filldraw (2, 1) circle(2pt);
		\filldraw (3, 0) circle(2pt);
		\filldraw (4, 0) circle(2pt);
		
		\draw (4.5, 0) node[below] {$,$};
		
		\draw (5, 0) -- (6, 0) -- (7, 0) -- (7, 1) -- (7, 0) -- (8, 0) -- (9, 0) -- (10, 0);
		
		\filldraw (5, 0) circle(2pt);
		\filldraw (6, 0) circle(2pt);
		\filldraw (7, 0) circle(2pt);
		\filldraw (7, 1) circle(2pt);
		\filldraw (8, 0) circle(2pt);
		\filldraw (9, 0) circle(2pt);
		\filldraw (10, 0) circle(2pt);
		
		\draw (10.5, 0) node[below] {$,$};
		
		\draw (11, 0) -- (12, 0) -- (13, 0) -- (13, 1) -- (13, 0) -- (14, 0) -- (15, 0) -- (16, 0) -- (17, 0);
		
		\filldraw (11, 0) circle(2pt);
		\filldraw (12, 0) circle(2pt);
		\filldraw (13, 0) circle(2pt);
		\filldraw (13, 1) circle(2pt);
		\filldraw (14, 0) circle(2pt);
		\filldraw (15, 0) circle(2pt);
		\filldraw (16, 0) circle(2pt);
		\filldraw (17, 0) circle(2pt);
		
		\draw(0, 1.5);
	%	\draw(0, -1);
		
		\end{tikzpicture}
		\caption{Configuration graphs that do not entail $F_2$.} \label{fig:treelike}
	\end{figure}
\end{thm}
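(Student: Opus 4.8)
The plan is to transport the whole question into $\PR$ via the representation $\rho$ of Theorem \ref{thm:thurstonconstruction}, so that the single number $\mu = \mu(A,B)$ decides everything. I would begin with two preliminary computations. First, a linear-algebra observation: the matrix $N = \big(i(\alpha_j,\beta_k)\big)$ is precisely the biadjacency matrix of the \emph{bipartite} graph $G(A\sqcup B)$ (its two sides being the vertex sets $A$ and $B$), so the adjacency matrix of $G(A\sqcup B)$ is $\big[\begin{smallmatrix} 0 & N\\ N^{t} & 0\end{smallmatrix}\big]$, whose eigenvalues are $\pm$ the singular values of $N$; hence the spectral radius of $G(A\sqcup B)$ equals $\sqrt{\lambda_{\max}(NN^{t})}=\sqrt{\mu}$. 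Second, multiplying the two matrices in Theorem \ref{thm:thurstonconstruction} gives $\tr\rho(T_A T_B)=2-\mu$. I also record that $(A,B)$ filling forces $G(A\sqcup B)$ to be connected (otherwise some proper complementary subsurface of one part would be filled into disks by the other part, contradicting an Euler-characteristic count) and forces $\mu>0$ (as $A$, $B$ must cross).

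For the implication ``$G(A\sqcup B)$ appears in Figure \ref{fig:treelike} $\Rightarrow \langle T_A,T_B\rangle\ncong F_2$'': the graphs listed there are exactly the $ADE$ Dynkin diagrams (the paths $A_n$; the $D_n$; and $E_6,E_7,E_8$), and by the classical classification of graphs of spectral radius below $2$ (Smith's theorem), a connected graph has spectral radius $<2$ if and only if it is one of these. Thus $\sqrt\mu<2$, i.e.\ $0<\mu<4$, so $|\tr\rho(T_AT_B)|=|2-\mu|<2$ and $\rho(T_AT_B)$ is elliptic. By Theorem \ref{thm:thurstonconstruction}(1) the mapping class $T_AT_B$ is then periodic, hence of finite order in $\Mod(S)$; it is nontrivial (otherwise $\langle T_A,T_B\rangle$ is cyclic, and in any case $\ncong F_2$). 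So $\langle T_A,T_B\rangle$ contains torsion and cannot be free. (For the few $ADE$ graphs too small to arise from an actual filling pair on a closed genus-$\geq 2$ surface, this implication is vacuous.)

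For the converse I argue contrapositively. Suppose $G(A\sqcup B)$ is \emph{not} in Figure \ref{fig:treelike}. If $G(A\sqcup B)$ has a multiple edge it contains $\big[\begin{smallmatrix}0&2\\2&0\end{smallmatrix}\big]$ as a principal submatrix; otherwise it is a connected simple graph outside the $ADE$ list. Either way Smith's theorem gives spectral radius $\geq 2$, hence $\sqrt\mu\geq 2$. Now $\rho(T_A)=\big[\begin{smallmatrix}1&-\sqrt\mu\\0&1\end{smallmatrix}\big]$ and $\rho(T_B)=\big[\begin{smallmatrix}1&0\\\sqrt\mu&1\end{smallmatrix}\big]$ with $\sqrt\mu\geq 2$, so a ping-pong argument on the cones $\{|x|>|y|\}$ and $\{|y|>|x|\}$ in $\mathbb{R}^2$ (for $\sqrt\mu=2$ this is Sanov's theorem) shows that $\rho(T_A)$ and $\rho(T_B)$ generate a free group of rank $2$ in $\PR$ \emph{with free basis $\{\rho(T_A),\rho(T_B)\}$}. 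Consequently the composite $F(x,y)\twoheadrightarrow\langle T_A,T_B\rangle\xrightarrow{\ \rho\ }\langle\rho(T_A),\rho(T_B)\rangle$ sending $x,y$ to $\rho(T_A),\rho(T_B)$ is an isomorphism; since it factors through the tautological surjection $F(x,y)\twoheadrightarrow\langle T_A,T_B\rangle$, that surjection must be injective, i.e.\ $\langle T_A,T_B\rangle\cong F_2$.

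The step I expect to be the main obstacle is the combinatorial heart ``$G(A\sqcup B)$ in Figure \ref{fig:treelike} $\Leftrightarrow$ spectral radius $<2$'': one must verify that Figure \ref{fig:treelike} is \emph{exactly} the $ADE$ family, apply the spectral characterization in the category of finite multigraphs rather than simple graphs, and be careful that ``$(A,B)$ filling'' really gives a connected configuration graph so that irreducible Perron--Frobenius theory (and hence the single scalar $\mu$) governs the picture. The boundary value $\mu=4$ --- where $G(A\sqcup B)$ is an \emph{affine} $ADE$ diagram, absent from Figure \ref{fig:treelike} --- is precisely where Sanov's theorem, rather than a naive open ping-pong, is needed, and it lands on the ``$\cong F_2$'' side, consistent with the statement.
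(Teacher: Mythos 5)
The paper does not prove this statement; it is quoted as Leininger's theorem with a citation to \cite{leininger2004groups}, so there is no in-paper proof to compare against. Your proposal is a self-contained reconstruction and it appears to be sound. The two pillars are (1) the identification of the spectral radius of the bipartite configuration graph $G(A\sqcup B)$ with $\sqrt{\mu}$ via the block form of its adjacency matrix, together with the Smith classification of connected multigraphs of spectral radius $<2$ as precisely the simply-laced $ADE$ Dynkin diagrams of Figure \ref{fig:treelike}; and (2) the trichotomy in $\PR$ driven by $\tr \rho(T_A T_B) = 2 - \mu$: for $\mu < 4$, the element $T_A T_B$ is a nontrivial periodic mapping class, so $\langle T_A, T_B\rangle$ has torsion and is not free, while for $\mu \geq 4$ a ping-pong/Sanov argument shows $\rho(T_A)$ and $\rho(T_B)$ freely generate a rank-$2$ free group, and injectivity of the tautological surjection $F_2 \twoheadrightarrow \langle T_A, T_B\rangle$ follows by factoring $\rho$.

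You correctly flag the two spots that need care: the filling hypothesis must deliver connectivity of $G(A\sqcup B)$ (the paper tacitly assumes this when asserting that $NN^t$ is primitive), and Smith's theorem must be applied in the multigraph setting, which you handle via the principal-submatrix observation for a double edge. One further detail worth recording in a full writeup: the ping-pong produces a free subgroup of $\mathrm{SL}(2,\R)$, whereas $\rho$ lands in $\PR$; since a free group is torsion-free, $-I$ cannot lie in that $\mathrm{SL}(2,\R)$ subgroup, so the quotient to $\PR$ restricts to an isomorphism and freeness persists. None of these is a genuine gap.
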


In other words, all configurations of $A$ and $B$ not as in Figure \ref{fig:treelike} result in $\langle T_A, T_B \rangle  \cong F_2$. Hence we observe the following typicality of $F_2$:

\typicalthurston
We now consider a quantitative model that demonstrates this typicality. Let us introduce the convention that $[A] := A/\mbox{isotopy}$ ($[B] := B/\mbox{isotopy}$, resp.), and $n_{\alpha}$ ($m_{\beta}$, resp.) is the number of representatives in $A$ ($B$, resp.) of an isotopy class $\alpha \in [A]$ ($\beta \in [B]$, resp.). We also note the result of Pankau \cite{pankau2017salem} that any nonsingular positive integer square matrix is realized as the intersection matrix of a filling pair of multicurves.

\begin{prop}[A quantitative reasoning for the typicality]  \label{prop:probmodel}
Consider the following random model for multicurves $A$ and $B$: \begin{itemize}
		\item $|[A]| = n$, $|[B]| = m$;
		\item $i(\alpha, \beta) \sim \Unif(0, 1, \ldots, k-1)$ for $\alpha \in [A]$, $\beta \in [B]$ and;
		\item $n_{\alpha}, m_{\beta} \sim \Unif(1, 2, \ldots, k)$ for $\alpha \in [A]$, $\beta \in [B]$
	\end{itemize} where the random variables are independent. Then, $$\Prob(A, B \mbox{ are filling and }\langle  T_A, T_B \rangle \ncong F_2) \le 2 - \left(1 - m/k^{m+1}\right)^n - \left(1 - n/k^{n+1}\right)^m.$$ 
\end{prop}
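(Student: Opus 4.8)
The plan is to estimate instead the probability of the larger event that the configuration graph $G(A\sqcup B)$ is isomorphic to one of the graphs in Figure~\ref{fig:treelike}, by extracting from each such graph a single isotopy class whose local data in the model is very rigid. By Theorem~\ref{thm:leininger}, whenever the sampled pair $(A,B)$ is filling with $\langle T_A,T_B\rangle\ncong F_2$, the graph $G(A\sqcup B)$ is isomorphic to one of those graphs; each of them is a tree without multiple edges, so on this event $i(\alpha,\beta)\in\{0,1\}$ for all $\alpha\in[A]$, $\beta\in[B]$. Call $\alpha\in[A]$ \emph{$A$-terminal} if $n_\alpha=1$, exactly one $\beta\in[B]$ has $i(\alpha,\beta)\neq 0$, and $i(\alpha,\beta)=1$ for that $\beta$; define \emph{$B$-terminal} classes symmetrically. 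The combinatorial heart of the proof is the claim: \emph{if $G(A\sqcup B)$ is one of the graphs in Figure~\ref{fig:treelike}, then some $\alpha\in[A]$ is $A$-terminal or some $\beta\in[B]$ is $B$-terminal.}

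To prove the claim it suffices, since the graph is a tree with no multiple edges, to produce an isotopy class of multiplicity $1$ that meets exactly one class on the opposite side. First, if some leaf $v$ of $G$ is the only curve in its isotopy class, then that class meets only the class of the unique neighbour of $v$, and it is $A$- or $B$-terminal. Otherwise every leaf has multiplicity $\ge 2$; since isotopic curves have equal intersection numbers with every curve, the curves isotopic to a leaf $v$ are again leaves sharing the unique neighbour of $v$, so the neighbour of each leaf has at least two leaf-neighbours. Running through the finitely many families in Figure~\ref{fig:treelike}, this is impossible unless $G$ is the path $P_3$ on three vertices or the claw $K_{1,3}$, because in every other graph the endpoint of a longest leg has a neighbour whose only leaf-neighbour is that endpoint, which would put us in the previous case. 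When $G\cong P_3$ or $K_{1,3}$ and every leaf has multiplicity $\ge 2$, all leaves lie in one class $\gamma$, and the unique non-leaf (central) vertex has a neighbourhood shared by no other vertex, so its class has multiplicity $1$ and meets only $\gamma$, hence is terminal. This proves the claim.

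For the probabilities: for a fixed $\alpha\in[A]$ the event that $\alpha$ is $A$-terminal depends only on the tuple $\big(n_\alpha,(i(\alpha,\beta))_{\beta\in[B]}\big)$, which by the independence and uniformity in the model is uniform on a set of $k\cdot k^{m}=k^{m+1}$ outcomes, of which exactly $m$ realize the event (choose the one $\beta$ met, give it intersection $1$ and the other $m-1$ intersection $0$, and set $n_\alpha=1$); hence $\Prob(\alpha\text{ is }A\text{-terminal})=m/k^{m+1}$. As $\alpha$ runs over the $n$ classes of $[A]$, these events are functions of pairwise disjoint blocks of the independent variables of the model, hence are mutually independent, so $\Prob(\text{some }\alpha\text{ is }A\text{-terminal})=1-(1-m/k^{m+1})^{n}$; symmetrically $\Prob(\text{some }\beta\text{ is }B\text{-terminal})=1-(1-n/k^{n+1})^{m}$. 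Combining the claim, Theorem~\ref{thm:leininger}, and a union bound gives
\[
\Prob\big(A,B\text{ filling and }\langle T_A,T_B\rangle\ncong F_2\big)\ \le\ \Big(1-\big(1-\tfrac{m}{k^{m+1}}\big)^{n}\Big)+\Big(1-\big(1-\tfrac{n}{k^{n+1}}\big)^{m}\Big),
\]
which is the asserted bound (the degenerate case $k=1$ being vacuous).

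I expect the main obstacle to be isolating the correct notion of ``terminal'': it must be forced by \emph{every} graph in Leininger's infinite list and at the same time be realized by exactly $m$ (resp.\ $n$) of the $k^{m+1}$ (resp.\ $k^{n+1}$) local outcomes, so that the independence across $[A]$ (resp.\ $[B]$) yields precisely the factors appearing in the bound. The naive candidates --- ``$n_\alpha\ge 2$'', or ``$\alpha$ is a leaf of $G$'' --- fail one or both requirements, which forces the three-part conjunction above; and even with the right definition, verifying the claim still requires treating the two small graphs $P_3$ and $K_{1,3}$ by hand, where the endpoint of a longest leg may be a repeated curve and the class to be exhibited is the central one rather than a boundary one.
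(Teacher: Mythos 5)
Your proof is correct and arrives at the same bound, but by a genuinely different route. The paper never invokes Leininger's characterization (Theorem~\ref{thm:leininger}) in the proof of Proposition~\ref{prop:probmodel}; instead it negates the ping-pong sufficient condition of Lemma~\ref{lem:suffforfree} with $A'=A$, $B'=B$, and then uses the filling hypothesis to upgrade ``$n_\alpha i(\alpha,[B])<2$'' to ``$n_\alpha i(\alpha,[B])=1$''. You replace this with Theorem~\ref{thm:leininger} (filling and not $F_2$ forces $G(A\sqcup B)$ to be an $A$-$D$-$E$ tree) together with a self-contained combinatorial lemma about those trees. What you lose is elementarity: the paper's Lemma~\ref{lem:suffforfree} is proved in a few lines from \cite[Proposition~A.1]{FLP}, whereas you lean on the substantially harder Theorem~\ref{thm:leininger}. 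What you gain is that the tree structure already forbids isolated vertices, so the passage from ``$<2$'' to ``$=1$'' (which the paper handles by the implicit geometric observation that filling forces $i(\alpha,[B])\ge 1$ for every $\alpha$) is absorbed into the graph-theoretic claim, and your notion of terminal class coincides on the nose with the paper's condition $n_\alpha i(\alpha,[B])=1$.

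Two small caveats. First, your claim as literally stated fails for the one-vertex graph $P_1$ (then one of $[A],[B]$ is empty and nothing is terminal); this is harmless because a filling pair forces both multicurves nonempty, hence $G(A\sqcup B)$ has at least two vertices, but the exception should be acknowledged rather than folded into ``this proves the claim''. Second, your inspection step --- that in every listed tree other than $P_3$ and $K_{1,3}$ the endpoint of a longest leg has a neighbour with exactly one leaf-neighbour --- is correct (it covers $A_n$ for $n\ge 2$, $D_n$ for $n\ge 5$, and $E_6$, $E_7$, $E_8$), but since Figure~\ref{fig:treelike} depicts three infinite-looking families the reader would benefit from being told explicitly that these are the $A$-$D$-$E$ Dynkin trees and why the longest-leg endpoint works uniformly in each family. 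The probability bookkeeping (uniformity over $k^{m+1}$ local outcomes, the count of $m$ terminal configurations, independence across $[A]$, and the final union bound) matches the paper's computation exactly.
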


Fixing a surface of genus $g$, we have a restriction $n, m \le 3g-3$ and the asymptotic probability $\Prob(A, B \mbox{ are filling and } \langle T_A, T_B \rangle \ncong F_2) \to 0$ as $k \to \infty$. In other words, the probability gets closer to $0$ as we allow more intersections among the given isotopy classes. Even in an extreme case that $n = m = 1$ and $k = 4$, the above bound is calculated as $1/8$.

In order to prove the proposition, we first provide a sufficient condition for $\langle T_A, T_B \rangle \cong F_2$ in terms of $i(\cdot, \cdot)$, $n_{\alpha}$, and $m_{\beta}$. Observing that $\langle T_{\alpha}, T_{\beta}\rangle \cong F_2$ if $i(\alpha, \beta) \ge 2$ for simple closed curves $\alpha$ and $\beta$, we expect a similar phenomenon for multitwists, generalizing the case of single curves:

\begin{lem} \label{lem:suffforfree}
	
	For multicurves $A$ and $B$ on $S$, suppose that there are subsets $\emptyset \neq A' \subseteq A$ and $\emptyset \neq B' \subseteq B$ satisfying the following:
	
	\begin{enumerate}
		\item $n_{\alpha} i(\alpha, [B']) = n_{\alpha}\sum_{\beta \in [B']} i(\alpha, \beta) \ge 2$ for all $\alpha \in [ A']$
		\item $m_{\beta}i([A'], \beta) = m_{\beta}\sum_{\alpha \in [A']} i(\alpha, \beta) \ge 2$ for all $\beta \in [B']$
	\end{enumerate}
	
	\noindent Then the group $\langle  T_A, T_B \rangle$ generated by multitwists is isomorphic to  $F_2.$
	
\end{lem}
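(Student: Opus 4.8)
The plan is to derive the lemma from Leininger's characterization, Theorem~\ref{thm:leininger}: it suffices to prove that the configuration graph $G := G(A \sqcup B)$ is \emph{not} one of the graphs in Figure~\ref{fig:treelike}. The feature of those graphs I would exploit is that each is a tree with no multiple edge, of maximum degree at most $3$, and with at most one vertex of degree $3$. Accordingly I would assume, for contradiction, that $G$ has all four of these properties and draw a contradiction from hypotheses (1)--(2). (This subsumes the single-curve case $i(\alpha,\beta)\ge 2$ mentioned before the lemma: such an intersection produces a multiple edge of $G$.)

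First I would translate the hypotheses. Since the number of edges of $G$ joining a representative of $\alpha\in A$ to a representative of $\beta\in B$ is $i(\alpha,\beta)$, the absence of multiple edges forces $i(\alpha,\beta)\le 1$ whenever $\alpha\in[A']$, $\beta\in[B']$. Writing $d_\alpha := \#\{\beta\in[B'] : i(\alpha,\beta)=1\}$ and $e_\beta := \#\{\alpha\in[A'] : i(\alpha,\beta)=1\}$, hypotheses (1)--(2) become $n_\alpha d_\alpha\ge 2$ for $\alpha\in[A']$ and $m_\beta e_\beta\ge 2$ for $\beta\in[B']$; so every class of multiplicity $1$ has $d_\bullet\ge 2$ (resp.\ $e_\bullet\ge 2$), and every class of multiplicity $\ge 2$ has $d_\bullet\ge 1$ (resp.\ $e_\bullet\ge 1$). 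The key structural remark, which uses only that $G$ is a tree, is: if $\gamma\in[A']$ has $n_\gamma\ge 2$ then $d_\gamma=1$, since otherwise two representatives of $\gamma$ and representatives of two distinct $[B']$-classes meeting $\gamma$ span a $4$-cycle; and, with $\delta\in[B']$ the unique class with $i(\gamma,\delta)=1$, the same argument gives $m_\delta=1$. The mirror statement holds with $A'$, $B'$ swapped.

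Next I would show that \emph{at most one class of $[A']\cup[B']$ has multiplicity $\ge 2$}. If $\gamma\ne\gamma'$ both did, each would determine a neighbour-class ($\delta$, $\delta'$) whose unique representative meets every representative of $\gamma$ (resp.\ $\gamma'$) and also, since $e_\bullet\ge 2$ (resp.\ $d_\bullet\ge2$) forces a further incident class, a representative of a second class; such a vertex thus has degree $\ge 3$, hence exactly $3$. Depending on the sides of the bipartition, the two resulting degree-$3$ vertices are either distinct (two vertices of degree $3$) or equal (a vertex of degree $\ge n_\gamma+n_{\gamma'}\ge 4$), each contradicting the assumptions on $G$. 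With this in hand, consider the ``class graph'' $\overline G$ — the simple bipartite graph on $[A']\sqcup[B']$ with an edge $\alpha\beta$ whenever $i(\alpha,\beta)=1$. Picking one representative per class exhibits $\overline G$ as an induced subgraph of the tree $G$, so $\overline G$ is a forest with $|E(\overline G)|\le|V(\overline G)|-1$. But by the above, every vertex of $\overline G$ has degree $\ge 2$ except at most one, of degree $\ge 1$, so $2|E(\overline G)|\ge 2|V(\overline G)|-1$ and hence $|E(\overline G)|\ge|V(\overline G)|$, a contradiction. Therefore $G$ is not one of the graphs of Figure~\ref{fig:treelike}, and $\langle T_A,T_B\rangle\cong F_2$ by Theorem~\ref{thm:leininger}.

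The step I expect to require the most care is the claim that at most one class has multiplicity $\ge 2$: it is the only place where one must use \emph{both} hypotheses at once and invoke the fine combinatorics of Figure~\ref{fig:treelike} — that a vertex of degree $3$, when present, is unique, and that the maximum degree is exactly $3$ — all while tracking which side of the bipartition each representative lies on. The $4$-cycle observation and the closing edge count should be routine once the hypotheses have been translated.
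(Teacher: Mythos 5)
Your argument is a genuinely different route from the paper's. The paper proves Lemma~\ref{lem:suffforfree} directly and self-containedly by a ping-pong argument: it takes $X$ to be the set of isotopy classes of essential simple closed curves, sets $X_{A'}=\{x: i(x,[A'])<i(x,[B'])\}$ and $X_{B'}=\{x:i(x,[B'])<i(x,[A'])\}$, and uses the intersection-number estimate \cite[Proposition~A.1]{FLP} together with hypotheses (1)--(2) to verify $T_A^n X_{B'}\subseteq X_{A'}$ and $T_B^n X_{A'}\subseteq X_{B'}$. You instead invoke Leininger's classification (Theorem~\ref{thm:leininger}) as a black box and rule out the ADE-type configuration graphs combinatorially. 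Your combinatorics check out: the 4-cycle observation, the bound on the number of classes with multiplicity $\ge 2$ (which correctly uses both the uniqueness of a degree-$3$ vertex and the bound on maximum degree), and the closing edge-count $|E(\overline G)|\ge |V(\overline G)|$ versus $|E(\overline G)|\le|V(\overline G)|-1$ for a forest are all sound. So your approach does prove the statement under Leininger's hypotheses, at the cost of relying on a far deeper theorem in place of the elementary ping-pong lemma.

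There is, however, a genuine gap: Theorem~\ref{thm:leininger} as stated applies to a \emph{filling} pair $(A,B)$, while Lemma~\ref{lem:suffforfree} makes no such assumption and the paper's ping-pong proof does not need it (e.g.\ two curves $\alpha,\beta$ with $i(\alpha,\beta)\ge 2$ supported on a proper subsurface still satisfy the hypotheses, and $\langle T_\alpha,T_\beta\rangle\cong F_2$ by ping-pong, even though $(A,B)$ does not fill $S$). Your derivation therefore establishes only the special case of the lemma in which $(A,B)$ fills. To close the gap you would need either to appeal to a version of Leininger's theorem without the filling hypothesis, or to add an argument reducing the non-filling case to the filling case on the subsurface filled by $A\cup B$ (and then handle the passage from $\Mod(S')$ to $\Mod(S)$, e.g.\ via the Hopfian property of $F_2$). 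Since the paper does apply the lemma in a filling setting (Proposition~\ref{prop:probmodel}), the gap does not break the downstream results, but it does mean your proof proves a strictly weaker statement than the one claimed.
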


This lemma is a generalization of \cite[Theorem 3.2]{hamidi2002groups}. Proof of this lemma relies on the following ping-pong lemma:

\begin{lem}[Ping-pong lemma]
	Let $\langle  g_1, \ldots, g_n \rangle$, $n \ge 2$, be a group acting on a set $X$. Suppose that there are disjoint nonempty subsets $X_1, \ldots, X_n \subseteq~X$ such that for each $i \neq j$, $g_i^{k}(X_j) \subseteq X_i$ for all $0 \neq k \in \Z$. Then $$\langle  g_1, \ldots, g_n \rangle \cong F_n.$$
\end{lem}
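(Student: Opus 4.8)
The plan is to show that the canonical surjection $\phi\colon F_n \to \langle g_1, \ldots, g_n \rangle$, sending the $i$-th free generator $x_i$ to $g_i$, is injective; equivalently, that every nonempty reduced word $w = g_{i_1}^{k_1} g_{i_2}^{k_2} \cdots g_{i_\ell}^{k_\ell}$ (with $k_j \neq 0$ and $i_j \neq i_{j+1}$) acts nontrivially on $X$, so that $w \neq e$ in the group. Since $e$ acts as the identity on $X$, this suffices, and surjectivity of $\phi$ is automatic from the definition of $\langle g_1,\ldots,g_n\rangle$.

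The first step is the ping-pong estimate in the case where the word begins and ends with the same generator, $i_1 = i_\ell =: s$ (this includes $\ell = 1$). Because $n \ge 2$, I can fix an index $t \neq s$. Applying $w$ to $X_t$ and reading right to left: $g_{i_\ell}^{k_\ell}(X_t) \subseteq X_{i_\ell} = X_s$ since $i_\ell = s \neq t$ and $k_\ell \neq 0$; then, inductively, whenever a set sits inside $X_{i_{j+1}}$, applying $g_{i_j}^{k_j}$ pushes it into $X_{i_j}$ because $i_j \neq i_{j+1}$ and $k_j\neq 0$. At the end $w(X_t) \subseteq X_{i_1} = X_s$. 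As $X_t$ is nonempty and disjoint from $X_s$, the element $w$ moves some point of $X_t$, so $w \neq e$.

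The second step handles a general nonempty reduced word with $i_1 \neq i_\ell$ (so $\ell \ge 2$) by reducing it to the first case through conjugation. I would set $\epsilon := -\operatorname{sign}(k_\ell)$ and consider $w' := g_{i_\ell}^{\epsilon}\, w\, g_{i_\ell}^{-\epsilon} = g_{i_\ell}^{\epsilon} g_{i_1}^{k_1} \cdots g_{i_{\ell-1}}^{k_{\ell-1}} g_{i_\ell}^{k_\ell - \epsilon}$. The sign is chosen so that $k_\ell - \epsilon = k_\ell + \operatorname{sign}(k_\ell)$ has absolute value $|k_\ell| + 1 \neq 0$; thus $w'$ is again a nonempty reduced word, now with first and last generator both equal to $g_{i_\ell}$ (the interior is unchanged, and $g_{i_\ell}\neq g_{i_1}$, $g_{i_\ell}\neq g_{i_{\ell-1}}$). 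By the first step $w' \neq e$, and since $w'$ is conjugate to $w$, also $w \neq e$. Combining the two steps, $\ker \phi = \{e\}$, so $\phi$ is an isomorphism and $\langle g_1, \ldots, g_n \rangle \cong F_n$.

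I expect the only genuinely delicate point to be precisely this second step in the rank-two case: when $n = 2$ and the word starts and ends with different generators there is no ``spectator'' index $t \notin \{i_1, i_\ell\}$, so passing to a conjugate is unavoidable, and one must conjugate in the direction that lengthens the last syllable rather than the one that might collapse it to the empty syllable — hence the specific choice $\epsilon = -\operatorname{sign}(k_\ell)$. For $n \ge 3$ this subtlety disappears, since one can simply pick $t \notin \{i_1, i_\ell\}$ and run the first step's estimate directly; the conjugation argument is just a uniform way to treat all $n \ge 2$ at once.
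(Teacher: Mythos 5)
Your proof is correct, and it is the standard argument for the ping-pong lemma (Klein's criterion): show every nonempty reduced word acts nontrivially, handling words that begin and end with the same generator directly via the ping-pong estimate against a spectator set $X_t$, and reducing the remaining case by conjugating so as to lengthen (rather than cancel) the last syllable. The paper states this lemma without proof, as a classical fact, so there is nothing to compare against; your write-up supplies exactly the argument one would expect, and the one delicate point you flag --- choosing $\epsilon = -\operatorname{sign}(k_\ell)$ so the conjugated word stays reduced --- is handled correctly.
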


\begin{proof}[Proof of Lemma \ref{lem:suffforfree}]
	To apply the ping-pong lemma, let $X$ be the set of isotopy classes of essential simple closed curves on $S$. Then for $x \in X$ and $0\neq n \in \Z$, \cite[Proposition A.1]{FLP} implies \begin{equation} \label{eqn:intersectionineq}
	\left| i(T^n_{A}x, [B']) - |n| \sum_{\alpha \in [A]} n_{\alpha} i(x, \alpha) i(\alpha, [B'])\right| \le i(x, [B']).
	\end{equation}
	
	Now let $$X_{A'} = \{x \in X : i(x, [A']) < i(x, [B'])\},$$ $$X_{B'}= \{x \in X : i(x, [B']) < i(x, [A'])\}.$$ They are nonempty since $\emptyset \neq [A'] \subseteq X_{A'}$ and $\emptyset \neq [B'] \subseteq X_{B'}$, and are disjoint. For $x \in X_{B'}$ and $0 \neq n \in \Z$, it follows from Equation \ref{eqn:intersectionineq} that $$i(T_A^{n}x, [B']) \ge |n| \sum_{\alpha \in [A]} n_{\alpha}i(x, \alpha)i(\alpha, [B']) - i(x, [B']).$$ Since $x \in X_{B'}$ and $A' \subseteq A$, $$\begin{aligned}[cl]
	|n| \sum_{\alpha \in [A]} n_{\alpha} i(x, \alpha)i(\alpha, [B']) - i(x, [B']) & >  |n| \sum_{\alpha \in [A]} n_{\alpha}i(x, \alpha)i(\alpha, [B']) - i(x, [A'])\\ & \ge |n| \sum_{\alpha \in [A']} n_{\alpha} i(x, \alpha) i(\alpha, [B']) - i(x,[A']) \\ & = \sum_{\alpha \in [A']} i(x, \alpha)(|n|n_{\alpha}i(\alpha, [B']) - 1).\end{aligned}$$ From the condition (1) given in the statement, $$\begin{aligned}\sum_{\alpha \in [A']} i(x, \alpha)(|n|n_{\alpha}i(\alpha, [B']) - 1) & \ge \sum_{\alpha \in[ A']} i(x, \alpha) = i(x, [A']) \\ & = i(T_A^nx, T_A^n[A']) = i(T_A^nx, [A'])\end{aligned}$$ which implies $$T_{A}^{n}X_{B'}\subseteq X_{A'}.$$ A similar argument after interchanging $A$ and $B$ shows that $$T_{B}^nX_{A'} \subseteq X_{B'}.$$
	
	Therefore, by the ping-pong lemma, we conclude that $$\langle  T_A, T_B \rangle \cong F_2. \qedhere$$
\end{proof}

Based on this lemma, we now prove Proposition \ref{prop:probmodel}.

\begin{proof}[Proof of Proposition \ref{prop:probmodel}]
	
We fix $|[A]| = n$ and $|[B]| = m$ as in the statement, and then denote $$\Prob(i(\alpha, \beta) = j) = p_j \quad \Prob(n_{\alpha} = j) = \Prob(m_{\beta} = j) = q_j.$$
Note that $$\begin{aligned}\Prob(A, B \mbox{ are filling}, \langle T_A, T_B \rangle \ncong F_2) & \le \Prob(n_{\alpha}i(\alpha, [B])  = 1 \mbox{ for some }\alpha \in [A])\\ & \quad + \Prob(m_{\beta}i([A], \beta) = 1 \mbox{ for some }\beta \in [B]).\end{aligned}$$ We first estimate $\Prob(n_{\alpha}i(\alpha, [B]) = 1 \mbox{ for some }\alpha \in [A])$. For a fixed $\alpha \in [A]$, $$\Prob(n_{\alpha} i(\alpha, [B]) = 1) = \Prob(n_{\alpha} = 1)\Prob(i(\alpha, [B]) = 1) = q_1mp_1p_0^{m-1}$$ where the first equality comes from the independence. Hence, we have $$\Prob(n_{\alpha}i(\alpha, [B]) \neq 1 \mbox{ for all } \alpha \in [A]) = (1 - q_1mp_1p_0^{m-1})^n$$ from the independence, and thus $$\Prob(n_{\alpha}i(\alpha, [B]) = 1 \mbox{ for some }\alpha \in [A]) = 1 - (1-q_1mp_1p_0^{m-1})^n.$$ Similarly, $$\Prob(m_{\beta}i([A], \beta) = 1 \mbox{ for some }\beta \in [B]) = 1 - (1-q_1np_1p_0^{n-1})^m$$ which deduces the desired estimation $$\Prob(A, B \mbox{ are filling}, \langle T_A, T_B\rangle \ncong F_2) \le 2 - (1-q_1mp_1p_0^{m-1})^n - (1-q_1np_1p_0^{n-1})^m.$$

Now plugging in $p_{j} = q_{j} = 1/k$, we conclude $$\Prob(A, B \mbox{ are filling, }\langle  T_A, T_B \rangle \ncong F_2) \le 2 - \left(1 - m/k^{m+1}\right)^n - \left(1 - n/k^{n+1}\right)^{m}. \qedhere$$
\end{proof}

Note that the estimation above is based on a criterion weaker than Lemma \ref{lem:suffforfree}. Thus, the actual probability for not being $F_2$ will be even smaller than our estimation.

According to the classification of subgroups of $\Mod(S)$, proven by McCarthy and Papadopoulos \cite{mccarthy1989dynamics} and Kida \cite{kida2008mapping}, every subgroup of $\Mod(S)$ is classified into one of the below four types : \begin{itemize}
	\item \emph{Non-elementary subgroup}, i.e., it contains a pair of pseudo-Anosov mapping classes with disjoint fixed point sets in $\PMF(S)$.
	\item \emph{Virtually infinite cyclic subgroup}
	\item \emph{Reducible subgroup}, i.e., there exists a multicurve on $S$ fixed by an action of the subgroup
	\item \emph{Finite subgroup}
\end{itemize} This classification deduces the following proposition about $F_2$ subgroup of $\Mod(S)$ generated by two multitwists along filling multicurves.

\begin{prop} \label{prop:nonelementary}
	For a filling pair $(A, B)$ of multicurves on $S$ resulting in $\langle T_A, T_B\rangle \cong F_2$, $\langle T_A, T_B\rangle$ is a non-elementary subgroup of $\Mod(S)$.
\end{prop}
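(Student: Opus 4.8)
The plan is to prove the proposition by elimination, using the four-way classification of subgroups of $\Mod(S)$ recalled just above. Write $G := \langle T_A, T_B \rangle$; I must rule out that $G$ is finite, virtually infinite cyclic, or reducible, after which the classification forces $G$ to be non-elementary.

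Two of the three cases are immediate from the hypothesis $G \cong F_2$: the free group of rank $2$ is infinite, so $G$ is not finite, and it is not virtually infinite cyclic either (for instance it surjects onto $\Z^2$, or, more crudely, it has exponential growth while every virtually cyclic group has linear growth). The substantive point is that $G$ is not reducible, and the cleanest way I would argue this is to exhibit a pseudo-Anosov element inside $G$. For this I would feed Thurston's representation $\rho : G \to \PR$ from Theorem \ref{thm:thurstonconstruction} an explicit word: since
$$\rho(T_A^{n} T_B) = \begin{bmatrix} 1 & -n\sqrt{\mu} \\ 0 & 1 \end{bmatrix}\begin{bmatrix} 1 & 0 \\ \sqrt{\mu} & 1 \end{bmatrix} = \begin{bmatrix} 1 - n\mu & -n\sqrt{\mu} \\ \sqrt{\mu} & 1 \end{bmatrix},$$
this matrix has trace $2 - n\mu$, and since the Perron--Frobenius eigenvalue $\mu = \mu(A,B)$ of the primitive matrix $NN^t$ is strictly positive, the trace leaves the interval $[-2,2]$ as soon as $n\mu > 4$. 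Thus for every sufficiently large $n$ the element $\rho(T_A^{n}T_B)$ is hyperbolic, so $T_A^{n}T_B \in G$ is pseudo-Anosov by Theorem \ref{thm:thurstonconstruction}(1).

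To finish, I would invoke the standard fact that a reducible subgroup $H \le \Mod(S)$ contains no pseudo-Anosov element: $H$ fixes the isotopy class of some multicurve $C$, so after passing to the finite-index subgroup of $H$ that fixes each component of $C$, every element of $H$ has a power fixing an essential simple closed curve, which is impossible for (a power of) a pseudo-Anosov. Since $G$ does contain a pseudo-Anosov, $G$ is not reducible, and therefore the classification leaves only the possibility that $G$ is non-elementary. The only step needing a moment's care is the choice of word in the matrix computation: one cannot simply take $\rho(T_A T_B)$, whose trace $2 - \mu$ may well have absolute value at most $2$ (making it elliptic or parabolic), so replacing $T_A$ by a sufficiently high power is exactly what is required to push the trace past $2$ in absolute value, and this works for any $\mu > 0$.
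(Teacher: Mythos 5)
Your proof is correct, and for the two substantive cases it takes a route genuinely different from the paper's. To rule out reducibility, the paper argues directly from the topology: it assumes $G$ stabilizes a multicurve $C$, observes that $T_A$ and $T_B$ permute the components of $C$ so that $T_A^{M!}$ and $T_B^{M!}$ fix each curve $\gamma \in C$ (forcing $\gamma$ to be disjoint from both $A$ and $B$ up to isotopy), and then derives a contradiction with the filling hypothesis since $\gamma$ must sit in a complementary disk of $A \cup B$. You instead exhibit a pseudo-Anosov element explicitly via the trace of $\rho(T_A^n T_B)=2-n\mu$, which leaves $[-2,2]$ for $n$ large since $\mu>0$, and then invoke the standard fact that reducible subgroups contain no pseudo-Anosov. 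Both are sound; the paper's version is purely topological and does not appeal to Thurston's $\PR$-representation, while yours is more explicit and in fact produces the same kind of trace computation the paper later uses in Lemma~\ref{lem:necforfree} (your ``one cannot simply take $T_A T_B$'' caveat is exactly the borderline case $\mu=4$ that lemma pins down). To rule out virtually infinite cyclic, the paper runs a hands-on coset/word-length argument inside $F_2$, whereas you note that $F_2$ surjects onto $\Z^2$ (equivalently, has abelianization of rank $2$, or exponential growth), which no virtually cyclic group does; your argument is shorter and arguably cleaner. One minor point worth recording if you polish this: the fact that a reducible subgroup has no pseudo-Anosov element is standard but does deserve the one-line justification you gave (pass to the finite-index subgroup fixing each component of the reducing multicurve, and note that no power of a pseudo-Anosov fixes an essential simple closed curve).
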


\begin{proof}
	$\langle  T_A, T_B \rangle \cong F_{2}$ is clearly not a finite subgroup of $\Mod(S)$. Now suppose that $\langle T_A, T_B \rangle$ fixes a multicurve $C$. Let $M$ be the number of curves in $C$, and let $\gamma$ be one such curve.  Since $T_A$ permutes the curves in $C$, we observe $T_A^{M!}(\gamma) = \gamma$. This implies that $A$ and $\gamma$ are disjoint, possibly after perturbing $A$ a bit. Similarly, $T_{B}^{M!}(\gamma) = \gamma$, implying that $B$ and $\gamma$ are disjoint. Thus, a complementary region of $A \cup B$ contains the essential curve $\gamma$; that region cannot be null-homotopic, which contradicts the filling condition of $(A, B)$. In conclusion, $\langle T_A, T_B \rangle$ cannot be reducible.
	
	Now suppose that $\langle T_A, T_B \rangle$ is a virtually infinite cyclic subgroup, i.e., $$\langle T_A, T_B \rangle = \bigsqcup_{i = 1}^{n} h_i \langle  g \rangle$$ for some $g, h_i \in \langle T_A, T_B\rangle$ and $n \in \N$. Let $H, T \in \{T_A, T_B, T_A^{-1}, T_B^{-1}\}$ be the first and the last alphabet of the reduced word representing $g$, respectively. Furthermore, let $N = 1 + \max_{i} |h_i|$ where $| \cdot |$ denotes the word norm in terms of $T_A^{\pm}$ and $T_B^{\pm}$. Finally, let $R \in \{T_A, T_B, T_A^{-1}, T_B^{-1}\} \setminus \{H^{-1}, T\}$. Then $$R^N = h_j g^k = h_j (H\cdots T)^k$$holds for some $j \in \{1, \ldots, n\}$ and $k \in \Z$ since $R^N \in \langle  T_A, T_B \rangle = \bigsqcup_{i = 1}^{n} h_i \langle  g \rangle$.
	
	If $k = 0$, then $R^N = h_j$, deducing $|h_j| = N > |h_j|$, contradiction.
	
	If $k > 0$, then $$R^N T^{-1} \cdots H^{-1} = h_j$$ where the left-hand side is in its reduced form. Hence, it follows that $|h_j| = |R^N T^{-1} \cdots H^{-1}| \ge N > |h_j|$, a contradiction.
	
	If $k < 0$, then $$R^N H \cdots T = h_j$$ where the left-hand side is in its reduced form. Hence, $|h_j| = |R^N H \cdots T| \ge N > |h_j|$, a contradiction.
	
	Therefore, $\langle T_A, T_B \rangle \cong F_2$ cannot be virtually infinite cyclic subgroup and we conclude that it is a non-elementary subgroup.
\end{proof}

\subsection{Elements of $\langle  T_A, T_B \rangle \cong F_2$} \label{subsec:classificationF_2}

We finish this section by classifying elements of $\langle  T_A, T_B \rangle \cong F_2$ for a filling pair $(A, B)$. First, we observe the following property of the Perron--Frobenius eigenvalue $\mu = \mu(A, B)$:

\begin{lem} \label{lem:necforfree}
	For filling multicurves $A$ and $B$ on $S$, with $\langle  T_A, T_B \rangle \cong F_2$, $$\mu = \mu(A, B) \ge 4.$$
\end{lem}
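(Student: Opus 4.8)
The plan is to argue by contradiction, exploiting the explicit $\PR$-representation $\rho$ of $\langle T_A, T_B\rangle$ provided by Thurston's construction (Theorem \ref{thm:thurstonconstruction}) together with its property $(1)$, which reads off the Nielsen--Thurston type of an element $\varphi \in \langle T_A, T_B\rangle$ from the conjugacy type of $\rho(\varphi)$ in $\PR$.

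\textbf{Key computation.} The one computation I need is the image of the element $T_A T_B$:
$$\rho(T_A T_B) = \begin{bmatrix} 1 & -\sqrt{\mu} \\ 0 & 1 \end{bmatrix} \begin{bmatrix} 1 & 0 \\ \sqrt{\mu} & 1 \end{bmatrix} = \begin{bmatrix} 1-\mu & -\sqrt{\mu} \\ \sqrt{\mu} & 1 \end{bmatrix},$$
which has trace $2 - \mu$. I also record that $\mu > 0$: since $(A, B)$ is filling, some intersection number $i(\alpha, \beta)$ is nonzero, so the intersection matrix $N$ is nonzero, hence $NN^{t} \neq 0$; being moreover primitive, $NN^{t}$ has a strictly positive Perron--Frobenius eigenvalue $\mu$.

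\textbf{The contradiction.} Suppose for contradiction that $\mu < 4$. Then $-2 < 2 - \mu < 2$, so $\rho(T_A T_B)$ is elliptic, and Theorem \ref{thm:thurstonconstruction}$(1)$ forces $T_A T_B$ to be a periodic mapping class, in particular of finite order in $\Mod(S)$. On the other hand $T_A T_B \neq \mathrm{id}$ (otherwise $\langle T_A, T_B\rangle = \langle T_A\rangle$ would be cyclic, not $F_2$), and $\langle T_A, T_B\rangle \cong F_2$ is torsion-free, so $T_A T_B$ has infinite order. This contradiction yields $\mu \ge 4$.

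\textbf{Expected difficulty.} I do not expect a genuine obstacle here: the statement falls out immediately once Theorem \ref{thm:thurstonconstruction} is invoked. The only place that warrants a line of care is the remark that $\mu > 0$, which is what rules out the degenerate alternative that $\rho(T_A T_B)$ is the identity rather than a nontrivial elliptic element; after that, the dichotomy ``finite order versus nontrivial element of a free group'' closes the argument. One could equally run it with any fixed reduced word whose $\rho$-image has trace in $(-2, 2)$ when $\sqrt{\mu} < 2$, but $T_A T_B$ is the most economical choice.
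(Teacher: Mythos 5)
Your proof is correct and follows essentially the same argument as the paper: compute $\tr\bigl(\rho(T_A T_B)\bigr) = 2 - \mu$, assume $\mu < 4$ to conclude $\rho(T_A T_B)$ is elliptic, invoke Theorem \ref{thm:thurstonconstruction}(1) to get that $T_A T_B$ is periodic, and contradict torsion-freeness of $F_2$. The extra remarks you include about $\mu > 0$ and $T_A T_B \neq \mathrm{id}$ are sound but not present in the paper's more terse version.
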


\begin{proof}
	Suppose to the contrary that $\mu < 4$. Then, $$ \tr \left(\rho(T_A)\rho(T_B) \right)  = 2 - \mu \in (-2, 2)$$ where $\rho$ is the representation for Thurston's construction. Hence $\rho(T_AT_B)$ is elliptic in $\PR$ and thus it follows from Theorem \ref{thm:thurstonconstruction}, Thurston's construction, that $T_AT_B$ is periodic, which contradicts to $\langle  T_A, T_B \rangle \cong F_2$.
\end{proof}

As an application of Lemma \ref{lem:necforfree}, we get the following classification of elements of $\langle  T_A, T_B \rangle \cong F_2$ by Leininger \cite[Proposition 6.4]{leininger2004groups}.

\begin{thm} \label{thm:classofelts}
	Let $A$ and $B$ be filling multicurves on $S$ with $\langle T_A, T_B \rangle \cong F_2$. If $\mu(A, B) > 4$, then
	$$\begin{matrix}
	\varphi \in \langle T_A, T_B \rangle \\
	\mbox{is pseudo-Anosov}
	\end{matrix} \quad \Leftrightarrow \quad
	\begin{matrix}
	\nexists g \in \langle T_A, T_B \rangle \mbox{ such that}\\
	g \varphi g^{-1} \in \langle T_A \rangle \cup \langle T_B \rangle.
	\end{matrix}$$ Otherwise (i.e., $\mu(A, B) = 4$), we have
	$$\begin{matrix}
	\varphi \in \langle T_A, T_B \rangle \\
	\mbox{is pseudo-Anosov}
	\end{matrix} \quad \Leftrightarrow \quad 
	\begin{matrix}
	\nexists g \in \langle T_A, T_B \rangle \mbox{ such that}\\
	g \varphi g^{-1} \in \langle T_A \rangle \cup \langle T_B \rangle \cup \langle T_AT_B \rangle.
	\end{matrix}$$
	
	% \varphi \notin \bigcup_{g \in \langle T_A, T_B \rangle} g \left(\langle T_A \rangle \cup \langle T_B \rangle \right)g^{-1}.$$ Otherwise, i.e., $\mu(A, B) = 4$, then $$\varphi \in \langle T_A, T_B \rangle \mbox{ is pseudo-Anosov} \Leftrightarrow \varphi \notin \bigcup_{g \in \langle T_A, T_B \rangle} g \left(\langle T_A \rangle \cup \langle T_B \rangle \cup \langle T_AT_B \rangle \right)g^{-1}.$$
	
%	For filling multicurves $A$ and $B$ on $S$ with $\langle T_A, T_B \rangle \cong F_2$, every nontrivial element of $\langle T_A, T_B \rangle$ is pseudo-Anosov except conjugates of powers of $T_A$ and $T_B$, and also, in the case of $\mu(A, B) = 4$, $T_AT_B$.
\end{thm}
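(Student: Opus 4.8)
The plan is to read off Leininger's classification \cite[Proposition 6.4]{leininger2004groups} from the $\PR$-representation $\rho$ of Theorem~\ref{thm:thurstonconstruction}, using that $\mu\ge4$ by Lemma~\ref{lem:necforfree}. I would open with three reductions. (i) Since $\langle T_A,T_B\rangle\cong F_2$ is torsion-free, no nontrivial element is periodic, so $\rho(\varphi)$ is never elliptic for $\varphi\ne1$; together with the elliptic/parabolic/hyperbolic trichotomy in $\PR$ and part~(1) of Theorem~\ref{thm:thurstonconstruction}, this shows that for $\varphi\ne1$ the map $\varphi$ is pseudo-Anosov if and only if $\rho(\varphi)$ is hyperbolic, equivalently $\rho(\varphi)$ is \emph{not} parabolic. (ii) Because $\sqrt\mu\ge2$, a Sanov-type ping-pong on $\partial\mathbb{H}^2\cong\mathbb{R}\cup\{\infty\}$ shows $\Gamma:=\langle\rho(T_A),\rho(T_B)\rangle$ is free of rank $2$; thus $\rho$ is a surjection between rank-$2$ free groups, hence an isomorphism since $F_2$ is Hopfian. (iii) The implication ``$\varphi$ pseudo-Anosov $\Rightarrow$ no conjugate in the list'' is then immediate in contrapositive form: if $g\varphi g^{-1}$ lies in $\langle T_A\rangle\cup\langle T_B\rangle$, or in $\langle T_AT_B\rangle$ when $\mu=4$, then $\rho(g\varphi g^{-1})$ is a power of $\rho(T_A)$, $\rho(T_B)$, or (when $\mu=4$) $\rho(T_AT_B)$, each of which is parabolic or the identity (the first two are unipotent of trace $2$, and $\tr\rho(T_AT_B)=2-\mu=-2$ when $\mu=4$); hence $g\varphi g^{-1}$, and so $\varphi$, is reducible or trivial, not pseudo-Anosov.

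The substance is the converse: assuming $\varphi\ne1$ and $\rho(\varphi)$ parabolic, show $\varphi$ is conjugate into the asserted list. After conjugating in $\langle T_A,T_B\rangle\cong F_2$ I may take $\varphi$ cyclically reduced in $T_A^{\pm},T_B^{\pm}$, so $\varphi$ is a nonzero power of $T_A$, a nonzero power of $T_B$, or a mixed word $T_A^{n_1}T_B^{m_1}\cdots T_A^{n_k}T_B^{m_k}$ with $k\ge1$ and all exponents nonzero. The first two already lie in the list, so the question is whether a mixed word can map to a parabolic. On $\partial\mathbb{H}^2$ one has $\rho(T_A)^n\colon z\mapsto z-n\sqrt\mu$ fixing $\infty$ and $\rho(T_B)^n$ fixing $0$, and I would use the ping-pong table $X_A=\{|z|>1\}\cup\{\infty\}$, $X_B=\{|z|<1\}$: when $\sqrt\mu>2$ one checks that $\rho(T_A)^n(\overline{X_B})$ and $\rho(T_B)^n(\overline{X_A})$ are compact subsets of $X_A$ and $X_B$ respectively for every $n\ne0$. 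Feeding a mixed word through this table, $\rho(\varphi)$ maps $\overline{X_A}$ into a compact subset of $X_A$, hence has an attracting fixed point in $X_A$; applying the same argument to $\varphi^{-1}$ gives an attracting fixed point of $\rho(\varphi)^{-1}$ in $X_B$, i.e.\ a repelling fixed point of $\rho(\varphi)$ in $X_B$. Since $X_A\cap X_B=\emptyset$ these two fixed points are distinct, so $\rho(\varphi)$ is hyperbolic. Thus when $\mu>4$ no mixed word is parabolic, and the only non-pseudo-Anosov conjugacy classes are $\langle T_A\rangle$ and $\langle T_B\rangle$, which settles the $\mu>4$ case.

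For $\mu=4$ the table degenerates --- $X_A$ and $X_B$ only touch, at $\pm1$ --- and mixed words genuinely can be parabolic: $\rho(T_AT_B)=\left[\begin{smallmatrix}-3&-2\\2&1\end{smallmatrix}\right]$ has trace $-2$, so $T_AT_B$ is a reducible multitwist, and this is exactly where the extra class appears. Here I would identify $\Gamma=\langle\rho(T_A),\rho(T_B)\rangle$ with $\overline{\Gamma(2)}$, the image in $\PR$ of the principal congruence subgroup $\Gamma(2)\le\mathrm{SL}(2,\Z)$, via the classical fact that $\Gamma(2)/\{\pm I\}$ is free of rank $2$ on $\left[\begin{smallmatrix}1&2\\0&1\end{smallmatrix}\right]=\rho(T_A)^{-1}$ and $\left[\begin{smallmatrix}1&0\\2&1\end{smallmatrix}\right]=\rho(T_B)$. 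Then $\Gamma$ is a lattice with $\mathbb{H}^2/\Gamma$ the thrice-punctured sphere, so it has exactly three cusps, namely the orbits of $\infty$, $0$, and $-1$ (the last being the fixed point of $\rho(T_AT_B)$). Each of $T_A$, $T_B$, $T_AT_B$ is primitive in $\langle T_A,T_B\rangle\cong F_2$ --- each belongs to a free basis --- so via the isomorphism $\rho$ each of $\rho(T_A)$, $\rho(T_B)$, $\rho(T_AT_B)$ generates the full cyclic stabilizer of its cusp. Since in a lattice every parabolic element is conjugate into a cusp stabilizer, $\rho(\varphi)$ parabolic forces $\rho(\varphi)$ to be conjugate in $\Gamma$ to a power of $\rho(T_A)$, $\rho(T_B)$, or $\rho(T_AT_B)$; transporting through $\rho^{-1}$ shows $\varphi$ is conjugate in $\langle T_A,T_B\rangle$ into $\langle T_A\rangle\cup\langle T_B\rangle\cup\langle T_AT_B\rangle$, finishing the proof.

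The main obstacle is clearly the borderline case $\mu=4$: the clean ping-pong of the $\mu>4$ case breaks down precisely because the intervals $X_A,X_B$ meet, and one must pin down exactly which mixed conjugacy classes remain parabolic. Passing to $\overline{\Gamma(2)}$ and its three-cusped quotient is the cleanest route; an alternative would be a refined ping-pong tracking the single common endpoint of the two intervals, which should show directly that the surviving parabolic classes are exactly the conjugates of powers of $T_AT_B$. The remaining ingredients --- torsion-freeness of $F_2$, Hopficity, and the trace computations $\tr\rho(T_A)=\tr\rho(T_B)=2$ and $\tr\rho(T_AT_B)=2-\mu$ --- are routine.
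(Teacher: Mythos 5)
Your proof is correct and arrives at the same classification, but by a route that differs from the paper's in both cases. The paper's argument is purely geometric: it reads the quotient orbifold $\mathbb{H}^2/\rho(\langle T_A,T_B\rangle)$ off the fundamental domains in Figure 2 (a thrice-punctured sphere when $\mu=4$; a twice-punctured disk when $\mu>4$) and then appeals in one stroke to the fact that hyperbolic conjugacy classes are exactly those freely homotopic to closed geodesics, so the non-geodesic classes are precisely the loops around the punctures, i.e.\ the conjugates of powers of $T_A$, $T_B$ (and $T_AT_B$ when $\mu=4$). You instead do two different things. For $\mu>4$ you run a quantitative ping-pong on $\partial\mathbb{H}^2$ showing that every cyclically reduced alternating word pushes a closed half into a compact piece of itself and therefore has an attracting/repelling pair, hence is hyperbolic; this is a hands-on replacement for the twice-punctured-disk picture and is arguably more elementary, at the price of the bookkeeping needed to put a general cyclically reduced word into the form $T_A^{n_1}T_B^{m_1}\cdots T_A^{n_k}T_B^{m_k}$ before feeding it to the table. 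For $\mu=4$ you pass to $\overline{\Gamma(2)}$ and use lattice theory (every parabolic is conjugate into a cusp stabilizer, cusp stabilizers are cyclic, and $T_A$, $T_B$, $T_AT_B$ are primitive so generate them); this reaches the same thrice-punctured sphere picture as the paper but via the arithmetic identification and a group-theoretic argument rather than by reading the cusps directly off the fundamental domain. Your preliminary reductions (torsion-freeness of $F_2$ rules out elliptics, ping-pong plus Hopficity gives faithfulness of $\rho$, trace computations give the easy direction) are all sound and supply detail that the paper leaves implicit. In short: same destination, but the paper travels through the quotient surface while you travel through the boundary action and the congruence subgroup; your version is more explicit and self-contained, the paper's is shorter and treats the two values of $\mu$ more uniformly.
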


\begin{proof}
	According to Thurston's construction, it suffices to classify elements of $\rho(\langle  T_A, T_B \rangle)$. To do this, let us consider its fundamental domain. Since $\mu \ge 4$ by Lemma \ref{lem:necforfree}, there are two cases:

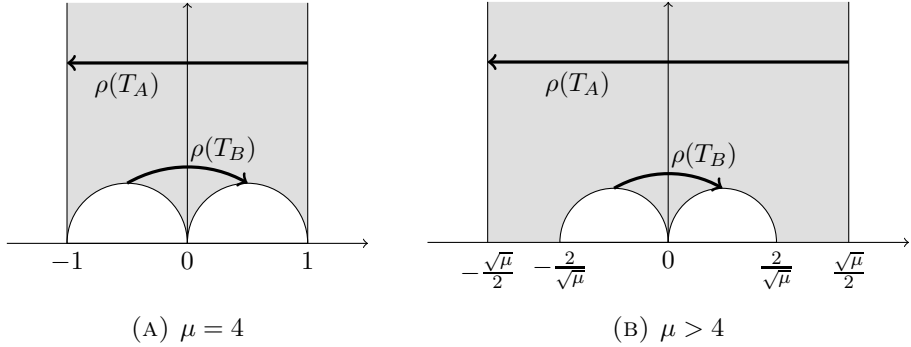
\begin{figure}[h]
	\centering
	\begin{subfigure}[b]{0.4\textwidth}
		\centering
		\begin{tikzpicture}[scale=0.8, every node/.style={scale=0.9}]
		
		\fill[gray!25] (-2, 4) -- (-2, 0) arc(180:0:1) -- (0, 4) -- (0, 4) -- (0, 0) arc(180:0:1) -- (2, 4) -- cycle;
		
		\draw[->] (-3, 0) -- (3, 0);
		\draw[->] (0, 0) -- (0, 4);
		\draw (0, 0) node[below] {$0$};
		
		\draw (-2, 0) -- (-2, 4);
		\draw (-2, 0) node[below] {$-1$};
		\draw (2, 0) -- (2, 4);
		\draw (2, 0) node[below] {$1$};
		
		\draw (0, 0) arc(0:180:1) -- cycle;
		\draw (2, 0) arc(0:180:1) -- cycle;

		\draw[very thick, ->] (2, 3) -- (-2, 3);
		\draw (-1, 3) node[below] {$\rho(T_A)$};
		
		\draw[very thick, ->] (-1, 1) arc(120:60:2);
		\draw (0.6, 1.2) node[above] {$\rho(T_B)$};
		
		\draw (0, -0.9);
		
		\end{tikzpicture}
		
		\caption{$\mu = 4$}
		\label{fig:Fund1}
	\end{subfigure}
	\hfill
	\begin{subfigure}[b]{0.59\textwidth}
		\centering
		\begin{tikzpicture}[scale=0.8, every node/.style={scale=0.9}]
		
		\fill[gray!25] (-3, 4) -- (-3, 0) -- (-1.8, 0) arc(180:0:0.9) -- (0, 4) -- (0, 4) -- (0, 0) arc(180:0:0.9) -- (3, 0) -- (3, 4) -- cycle;
		
		\draw[->] (-4, 0) -- (4, 0);
		\draw[->] (0, 0) -- (0, 4);
		\draw (0, 0) node[below] {$0$};
		
		\draw (-3, 0) -- (-3, 4);
		\draw (-3, 0) node[below] {$-{\sqrt{\mu} \over 2}$};
		\draw (3, 0) -- (3, 4);
		\draw (3, 0) node[below] {${\sqrt{\mu} \over 2}$};
		
		\draw (0, 0) arc(0:180:0.9) -- cycle;
		\draw (-1.8, 0) node[below] {$-{2 \over \sqrt{\mu}}$};
		
		\draw (0, 0) arc(180:0:0.9) -- cycle;
		\draw (1.8, 0) node[below] {${2 \over \sqrt{\mu}}$};

		\draw[very thick, ->] (3, 3) -- (-3, 3);
		\draw (-1.5, 3) node[below] {$\rho(T_A)$};
		
		\draw[very thick, ->] (-0.9, 0.9) arc(120:60:1.8);
		\draw (0.6, 1.08) node[above] {$\rho(T_B)$};
		
		\end{tikzpicture}
		
		\caption{$\mu > 4$}
		\label{fig:Fund2}
	\end{subfigure}
	\caption{Fundamental domains of $\rho(\langle T_A, T_B \rangle)$}	
\end{figure}

\textbf{Case 1: $\mu = 4$.} Its fundamental domain is as in Figure \ref{fig:Fund1}. Accordingly, the quotient $\mathbb{H} ^2 / \rho(\langle  T_A, T_B \rangle)$ is a thrice-punctured sphere. Since hyperbolic elements of $\rho(\langle T_A, T_B \rangle)$ correspond to loops in $\mathbb{H} ^2 / \rho(\langle  T_A, T_B \rangle)$ which are freely homotopic to closed geodesics, the theorem follows in this case.

\textbf{Case 2: $\mu > 4$.} Fundamental domain for this case is as in Figure \ref{fig:Fund2}, which implies that the quotient $\mathbb{H}^2 / \rho(\langle T_A, T_B \rangle)$ is a twice-punctured disk. Again, by the same arguments, all elements of $\rho(\langle  T_A, T_B \rangle)$ but conjugates of powers of $T_A$ and $T_B$ are hyperbolic.
\end{proof}

We explore the properties of $\langle \rho(T_{A}), \rho(T_{B})\rangle$ in detail. Given a pivot fundamental domain depicted in Figure \ref{fig:Fund1} and \ref{fig:Fund2}, we consider a tiling of $\mathbb{H}^{2}$ with its translations by $\langle \rho(T_{A}), \rho(T_{B})\rangle$ as in Figure \ref{fig:FundTiling}. This tiling has the Cayley graph $\mathcal{T}$ of $\langle \rho(T_{A}), \rho(T_{B}) \rangle \cong F_{2}$ as a dual tree (blue and red lines in the figure). Each edge of $\mathcal{T}$ is colored and directed, and each vertex is labelled with a reduced word of alphabets $\{a^{\pm}, b^{\pm}\}$. The labelling rule of vertices is as follows: if $\overrightarrow{vw}$ is the blue (red, resp.) directed edge, $w$ is represented by concatenating $a$ to $v$ from the right.

%the (right) concatenation $va$ of $a$ ($b$, resp.) to $v$.

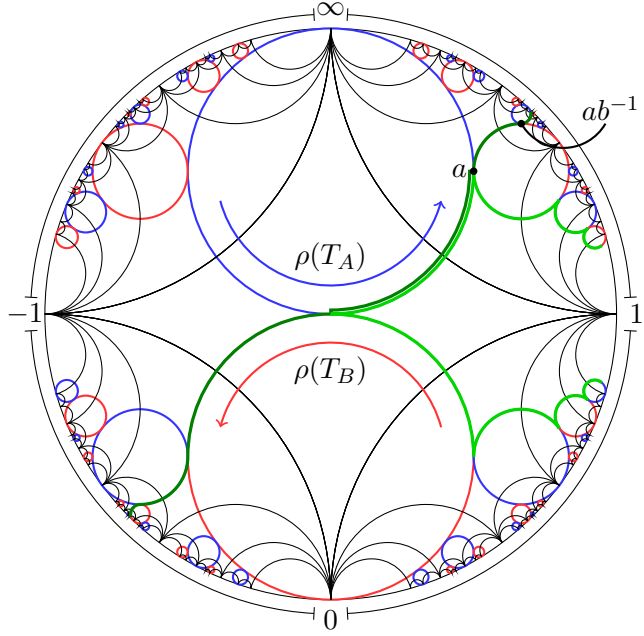
\begin{figure}[h]
\centering
\def\sc{3.8}
\begin{tikzpicture}[scale=1, every node/.style={scale=1}]
\draw (0, 0) circle (\sc);
\draw[blue!80, thick] (0, 0.5*\sc) circle (0.5*\sc);
\draw[red!80, thick] (0, -0.5*\sc) circle (0.5*\sc);

\foreach \i in {0, 1, 2, 3}{
\begin{scope}[rotate=90*\i]
\draw (\sc*1.05*0.998440764181981, \sc*1.05*0.055821504993164) arc (3.2:86.8:\sc*1.05);
\draw[rotate=3.2] (\sc*1.028, 0) -- (\sc*1.072, 0);
\draw[rotate=86.8] (\sc*1.028, 0) -- (\sc*1.072, 0);
\end{scope}
}
\draw (0, -\sc*1.07) node {$0$};
\draw (-\sc*1.07, 0) node {$-1$};
\draw (\sc*1.07, 0) node {$1$};
\draw (0, \sc*1.07) node {$\infty$};

%%FirstQuadrant
\begin{scope}[xscale=-1, rotate=-90]

\draw[thick, red!80] (-0.5*\sc, -2*\sc/3) circle (\sc/6);
\draw[thick, blue!80] (-13*\sc*5/182, -13*\sc*12/182) circle (\sc/14);

\draw[thick, blue!80] (-29*21*\sc/870, -29*20*\sc/870) circle (\sc/30);
\draw[thick, red!80] (-17*15*\sc/306, -17*8*\sc/306) circle (\sc/18);

\draw[thick, red!80](-25*7*\sc/650, -25*24*\sc/650) circle (\sc/26);
\draw[thick, red!80](-0.433439179286761*\sc, -0.889106008793356*\sc) circle (\sc/92);

\draw[thick, blue!80](-6.5*3.3*\sc/42.9, -6.5*5.6*\sc/42.9) circle (\sc/66);
\draw[thick, blue!80](-0.99*6.5*\sc/9.7, -0.99*7.2*\sc/9.7) circle (\sc/100);

\draw[thick, red!80] (-7.3*5.5*\sc/54.02, -7.3*4.8*\sc/54.02) circle (\sc/74);
\draw[thick, blue!80] (-5.3*4.5*\sc/28.62, -5.3*2.8*\sc/28.62) circle (\sc/54);

\draw[thick, blue!80] (-8.5*7.7*\sc/73.1, -8.5*3.6*\sc/73.1) circle (\sc/86);
\draw[thick, red!80] (-3.7*3.5*\sc/14.06, -3.7*1.2*\sc/14.06) circle (\sc/38);
\draw[thick, red!80] (-1.79*11.9*\sc/30.42, -1.79*12*\sc/30.42) circle (\sc/180);

\draw (0, -\sc) arc (0:90:1*\sc);
\draw (1*\sc, 0) arc (90:180:1*\sc);
\draw (0, 1*\sc) arc (180:270:1*\sc);
\draw (-\sc, 0) arc (270:360:1*\sc);
\draw (0, -\sc) arc (0:180-36.869897645844021:\sc/3);
\draw (-0.6*\sc, -0.8*\sc) arc (-36.869897645844021:90+36.869897645844021:\sc/7);
\draw (-\sc, 0) arc (90:-90+36.869897645844021:\sc/3);

\draw (0, -\sc) arc (0:157.380135051959574:\sc/5) arc (157.380135051959574-180:151.927513064147043:\sc/21) arc (151.927513064147043-180 : 180-36.869897645844021:\sc/13);

\draw (-0.6*\sc, -0.8*\sc) arc (-36.869897645844021:136.397181027296376:\sc/17) arc (136.397181027296376-180:133.602818972703624:\sc/41) arc (133.602818972703624-180:90+36.869897645844021:\sc/17);

\draw (-\sc, 0) arc (90:90-157.380135051959574:\sc/5) arc (90-157.380135051959574+180:90-151.927513064147043:\sc/21) arc (90-151.927513064147043+180 : -90+36.869897645844021:\sc/13);

\draw (0, -\sc) arc (0: 2*81.869897645844021:\sc/7) arc (2*81.869897645844021 - 180:2*80.537677791974383:\sc/43) arc (2*80.537677791974383-180:2*78.690067525979787:\sc/31);

\draw (-5*\sc/13, -12*\sc/13) arc (2*78.690067525979787-180:2*77.471192290848489:\sc/47) arc (2*77.471192290848489-180:2*77.005383208083496:\sc/119) arc (2*77.005383208083496-180:2*75.963756532073521:\sc/55);

\draw(-8*\sc/17, -15*\sc/17) arc (2*75.96375653207352-180: 2*74.744881296942224:\sc/47) arc (2*74.744881296942224-180:2*74.054604099077145:\sc/83) arc (2*74.054604099077145-180:2*71.565051177077989:\sc/23);

\draw(-3*\sc/5, -4*\sc/5) arc (2*71.565051177077989-180:2*69.443954780416536:\sc/27) arc (2*69.443954780416536-180:2*68.962488974578183:\sc/119) arc (2*68.962488974578183-180:2*68.198590513648188:\sc/75);

\draw(-20*\sc/29, -21*\sc/29) arc (2*68.198590513648188-180:2*67.619864948040426:\sc/99) arc (2*67.619864948040426-180:270-2*67.619864948040426:\sc/239) arc (90-2*67.619864948040426:270-2*68.198590513648188:\sc/99);

\draw (-21*\sc/29, -20*\sc/29)  arc (90-2*68.198590513648188:270-2*68.962488974578183:\sc/75) arc (90-2*68.962488974578183:270-2*69.443954780416536:\sc/119) arc (90-2*69.443954780416536:270-2*71.565051177077989:\sc/27);

\draw(-4*\sc/5, -3*\sc/5) arc (90-2*71.565051177077989:270-2*74.054604099077145:\sc/23) arc (90-2*74.054604099077145:270-2*74.744881296942224:\sc/83) arc (90-2*74.744881296942224:270-2*75.96375653207352:\sc/47);

\draw(-15*\sc/17, -8*\sc/17) arc (90-2*75.963756532073521:270-2*77.005383208083496:\sc/55) arc (90-2*77.005383208083496:270-2*77.471192290848489:\sc/119) arc (90-2*77.471192290848489:270-2*78.690067525979787:\sc/47);

\draw(-12*\sc/13, -5*\sc/13) arc (90-2*78.690067525979787:270-2*80.537677791974383:\sc/31) arc (90-2*80.537677791974383:270-2*81.869897645844021:\sc/43) arc (90-2*81.869897645844021:90:\sc/7);

\end{scope}

%%SecondQuadrant

\begin{scope}[rotate=-90]

\draw[thick, red!80] (-0.5*\sc, -2*\sc/3) circle (\sc/6);
\draw[thick, blue!80] (-13*\sc*5/182, -13*\sc*12/182) circle (\sc/14);

\draw[thick, blue!80] (-29*21*\sc/870, -29*20*\sc/870) circle (\sc/30);
\draw[thick, red!80] (-17*15*\sc/306, -17*8*\sc/306) circle (\sc/18);

\draw[thick, red!80](-25*7*\sc/650, -25*24*\sc/650) circle (\sc/26);
\draw[thick, red!80](-0.433439179286761*\sc, -0.889106008793356*\sc) circle (\sc/92);

\draw[thick, blue!80](-6.5*3.3*\sc/42.9, -6.5*5.6*\sc/42.9) circle (\sc/66);
\draw[thick, blue!80](-0.99*6.5*\sc/9.7, -0.99*7.2*\sc/9.7) circle (\sc/100);

\draw[thick, red!80] (-7.3*5.5*\sc/54.02, -7.3*4.8*\sc/54.02) circle (\sc/74);
\draw[thick, blue!80] (-5.3*4.5*\sc/28.62, -5.3*2.8*\sc/28.62) circle (\sc/54);

\draw[thick, blue!80] (-8.5*7.7*\sc/73.1, -8.5*3.6*\sc/73.1) circle (\sc/86);
\draw[thick, red!80] (-3.7*3.5*\sc/14.06, -3.7*1.2*\sc/14.06) circle (\sc/38);
\draw[thick, red!80] (-1.79*11.9*\sc/30.42, -1.79*12*\sc/30.42) circle (\sc/180);

\draw (0, -\sc) arc (0:90:1*\sc);
\draw (1*\sc, 0) arc (90:180:1*\sc);
\draw (0, 1*\sc) arc (180:270:1*\sc);
\draw (-\sc, 0) arc (270:360:1*\sc);
\draw (0, -\sc) arc (0:180-36.869897645844021:\sc/3);
\draw (-0.6*\sc, -0.8*\sc) arc (-36.869897645844021:90+36.869897645844021:\sc/7);
\draw (-\sc, 0) arc (90:-90+36.869897645844021:\sc/3);

\draw (0, -\sc) arc (0:157.380135051959574:\sc/5) arc (157.380135051959574-180:151.927513064147043:\sc/21) arc (151.927513064147043-180 : 180-36.869897645844021:\sc/13);

\draw (-0.6*\sc, -0.8*\sc) arc (-36.869897645844021:136.397181027296376:\sc/17) arc (136.397181027296376-180:133.602818972703624:\sc/41) arc (133.602818972703624-180:90+36.869897645844021:\sc/17);

\draw (-\sc, 0) arc (90:90-157.380135051959574:\sc/5) arc (90-157.380135051959574+180:90-151.927513064147043:\sc/21) arc (90-151.927513064147043+180 : -90+36.869897645844021:\sc/13);

\draw (0, -\sc) arc (0: 2*81.869897645844021:\sc/7) arc (2*81.869897645844021 - 180:2*80.537677791974383:\sc/43) arc (2*80.537677791974383-180:2*78.690067525979787:\sc/31);

\draw (-5*\sc/13, -12*\sc/13) arc (2*78.690067525979787-180:2*77.471192290848489:\sc/47) arc (2*77.471192290848489-180:2*77.005383208083496:\sc/119) arc (2*77.005383208083496-180:2*75.963756532073521:\sc/55);

\draw(-8*\sc/17, -15*\sc/17) arc (2*75.96375653207352-180: 2*74.744881296942224:\sc/47) arc (2*74.744881296942224-180:2*74.054604099077145:\sc/83) arc (2*74.054604099077145-180:2*71.565051177077989:\sc/23);

\draw(-3*\sc/5, -4*\sc/5) arc (2*71.565051177077989-180:2*69.443954780416536:\sc/27) arc (2*69.443954780416536-180:2*68.962488974578183:\sc/119) arc (2*68.962488974578183-180:2*68.198590513648188:\sc/75);

\draw(-20*\sc/29, -21*\sc/29) arc (2*68.198590513648188-180:2*67.619864948040426:\sc/99) arc (2*67.619864948040426-180:270-2*67.619864948040426:\sc/239) arc (90-2*67.619864948040426:270-2*68.198590513648188:\sc/99);

\draw (-21*\sc/29, -20*\sc/29)  arc (90-2*68.198590513648188:270-2*68.962488974578183:\sc/75) arc (90-2*68.962488974578183:270-2*69.443954780416536:\sc/119) arc (90-2*69.443954780416536:270-2*71.565051177077989:\sc/27);

\draw(-4*\sc/5, -3*\sc/5) arc (90-2*71.565051177077989:270-2*74.054604099077145:\sc/23) arc (90-2*74.054604099077145:270-2*74.744881296942224:\sc/83) arc (90-2*74.744881296942224:270-2*75.96375653207352:\sc/47);

\draw(-15*\sc/17, -8*\sc/17) arc (90-2*75.963756532073521:270-2*77.005383208083496:\sc/55) arc (90-2*77.005383208083496:270-2*77.471192290848489:\sc/119) arc (90-2*77.471192290848489:270-2*78.690067525979787:\sc/47);

\draw(-12*\sc/13, -5*\sc/13) arc (90-2*78.690067525979787:270-2*80.537677791974383:\sc/31) arc (90-2*80.537677791974383:270-2*81.869897645844021:\sc/43) arc (90-2*81.869897645844021:90:\sc/7);

\end{scope}

%%ThirdQuadrant

\begin{scope}[xscale = -1, rotate=90]

\draw[thick, blue!80] (-0.5*\sc, -2*\sc/3) circle (\sc/6);
\draw[thick, red!80] (-13*\sc*5/182, -13*\sc*12/182) circle (\sc/14);

\draw[thick, red!80] (-29*21*\sc/870, -29*20*\sc/870) circle (\sc/30);
\draw[thick, blue!80] (-17*15*\sc/306, -17*8*\sc/306) circle (\sc/18);

\draw[thick, blue!80](-25*7*\sc/650, -25*24*\sc/650) circle (\sc/26);
\draw[thick, blue!80](-0.433439179286761*\sc, -0.889106008793356*\sc) circle (\sc/92);

\draw[thick, red!80](-6.5*3.3*\sc/42.9, -6.5*5.6*\sc/42.9) circle (\sc/66);
\draw[thick, red!80](-0.99*6.5*\sc/9.7, -0.99*7.2*\sc/9.7) circle (\sc/100);

\draw[thick, blue!80] (-7.3*5.5*\sc/54.02, -7.3*4.8*\sc/54.02) circle (\sc/74);
\draw[thick, red!80] (-5.3*4.5*\sc/28.62, -5.3*2.8*\sc/28.62) circle (\sc/54);

\draw[thick, red!80] (-8.5*7.7*\sc/73.1, -8.5*3.6*\sc/73.1) circle (\sc/86);
\draw[thick, blue!80] (-3.7*3.5*\sc/14.06, -3.7*1.2*\sc/14.06) circle (\sc/38);
\draw[thick, blue!80] (-1.79*11.9*\sc/30.42, -1.79*12*\sc/30.42) circle (\sc/180);

\draw (0, -\sc) arc (0:90:1*\sc);
\draw (1*\sc, 0) arc (90:180:1*\sc);
\draw (0, 1*\sc) arc (180:270:1*\sc);
\draw (-\sc, 0) arc (270:360:1*\sc);
\draw (0, -\sc) arc (0:180-36.869897645844021:\sc/3);
\draw (-0.6*\sc, -0.8*\sc) arc (-36.869897645844021:90+36.869897645844021:\sc/7);
\draw (-\sc, 0) arc (90:-90+36.869897645844021:\sc/3);

\draw (0, -\sc) arc (0:157.380135051959574:\sc/5) arc (157.380135051959574-180:151.927513064147043:\sc/21) arc (151.927513064147043-180 : 180-36.869897645844021:\sc/13);

\draw (-0.6*\sc, -0.8*\sc) arc (-36.869897645844021:136.397181027296376:\sc/17) arc (136.397181027296376-180:133.602818972703624:\sc/41) arc (133.602818972703624-180:90+36.869897645844021:\sc/17);

\draw (-\sc, 0) arc (90:90-157.380135051959574:\sc/5) arc (90-157.380135051959574+180:90-151.927513064147043:\sc/21) arc (90-151.927513064147043+180 : -90+36.869897645844021:\sc/13);

\draw (0, -\sc) arc (0: 2*81.869897645844021:\sc/7) arc (2*81.869897645844021 - 180:2*80.537677791974383:\sc/43) arc (2*80.537677791974383-180:2*78.690067525979787:\sc/31);

\draw (-5*\sc/13, -12*\sc/13) arc (2*78.690067525979787-180:2*77.471192290848489:\sc/47) arc (2*77.471192290848489-180:2*77.005383208083496:\sc/119) arc (2*77.005383208083496-180:2*75.963756532073521:\sc/55);

\draw(-8*\sc/17, -15*\sc/17) arc (2*75.96375653207352-180: 2*74.744881296942224:\sc/47) arc (2*74.744881296942224-180:2*74.054604099077145:\sc/83) arc (2*74.054604099077145-180:2*71.565051177077989:\sc/23);

\draw(-3*\sc/5, -4*\sc/5) arc (2*71.565051177077989-180:2*69.443954780416536:\sc/27) arc (2*69.443954780416536-180:2*68.962488974578183:\sc/119) arc (2*68.962488974578183-180:2*68.198590513648188:\sc/75);

\draw(-20*\sc/29, -21*\sc/29) arc (2*68.198590513648188-180:2*67.619864948040426:\sc/99) arc (2*67.619864948040426-180:270-2*67.619864948040426:\sc/239) arc (90-2*67.619864948040426:270-2*68.198590513648188:\sc/99);

\draw (-21*\sc/29, -20*\sc/29)  arc (90-2*68.198590513648188:270-2*68.962488974578183:\sc/75) arc (90-2*68.962488974578183:270-2*69.443954780416536:\sc/119) arc (90-2*69.443954780416536:270-2*71.565051177077989:\sc/27);

\draw(-4*\sc/5, -3*\sc/5) arc (90-2*71.565051177077989:270-2*74.054604099077145:\sc/23) arc (90-2*74.054604099077145:270-2*74.744881296942224:\sc/83) arc (90-2*74.744881296942224:270-2*75.96375653207352:\sc/47);

\draw(-15*\sc/17, -8*\sc/17) arc (90-2*75.963756532073521:270-2*77.005383208083496:\sc/55) arc (90-2*77.005383208083496:270-2*77.471192290848489:\sc/119) arc (90-2*77.471192290848489:270-2*78.690067525979787:\sc/47);

\draw(-12*\sc/13, -5*\sc/13) arc (90-2*78.690067525979787:270-2*80.537677791974383:\sc/31) arc (90-2*80.537677791974383:270-2*81.869897645844021:\sc/43) arc (90-2*81.869897645844021:90:\sc/7);

\end{scope}

%%FourthQuadrant

\begin{scope}[rotate=90]

\draw[thick, blue!80] (-0.5*\sc, -2*\sc/3) circle (\sc/6);
\draw[thick, red!80] (-13*\sc*5/182, -13*\sc*12/182) circle (\sc/14);

\draw[thick, red!80] (-29*21*\sc/870, -29*20*\sc/870) circle (\sc/30);
\draw[thick, blue!80] (-17*15*\sc/306, -17*8*\sc/306) circle (\sc/18);

\draw[thick, blue!80](-25*7*\sc/650, -25*24*\sc/650) circle (\sc/26);
\draw[thick, blue!80](-0.433439179286761*\sc, -0.889106008793356*\sc) circle (\sc/92);

\draw[thick, red!80](-6.5*3.3*\sc/42.9, -6.5*5.6*\sc/42.9) circle (\sc/66);
\draw[thick, red!80](-0.99*6.5*\sc/9.7, -0.99*7.2*\sc/9.7) circle (\sc/100);

\draw[thick, blue!80] (-7.3*5.5*\sc/54.02, -7.3*4.8*\sc/54.02) circle (\sc/74);
\draw[thick, red!80] (-5.3*4.5*\sc/28.62, -5.3*2.8*\sc/28.62) circle (\sc/54);

\draw[thick, red!80] (-8.5*7.7*\sc/73.1, -8.5*3.6*\sc/73.1) circle (\sc/86);
\draw[thick, blue!80] (-3.7*3.5*\sc/14.06, -3.7*1.2*\sc/14.06) circle (\sc/38);
\draw[thick, blue!80] (-1.79*11.9*\sc/30.42, -1.79*12*\sc/30.42) circle (\sc/180);

\draw (0, -\sc) arc (0:90:1*\sc);
\draw (1*\sc, 0) arc (90:180:1*\sc);
\draw (0, 1*\sc) arc (180:270:1*\sc);
\draw (-\sc, 0) arc (270:360:1*\sc);
\draw (0, -\sc) arc (0:180-36.869897645844021:\sc/3);
\draw (-0.6*\sc, -0.8*\sc) arc (-36.869897645844021:90+36.869897645844021:\sc/7);
\draw (-\sc, 0) arc (90:-90+36.869897645844021:\sc/3);

\draw (0, -\sc) arc (0:157.380135051959574:\sc/5) arc (157.380135051959574-180:151.927513064147043:\sc/21) arc (151.927513064147043-180 : 180-36.869897645844021:\sc/13);

\draw (-0.6*\sc, -0.8*\sc) arc (-36.869897645844021:136.397181027296376:\sc/17) arc (136.397181027296376-180:133.602818972703624:\sc/41) arc (133.602818972703624-180:90+36.869897645844021:\sc/17);

\draw (-\sc, 0) arc (90:90-157.380135051959574:\sc/5) arc (90-157.380135051959574+180:90-151.927513064147043:\sc/21) arc (90-151.927513064147043+180 : -90+36.869897645844021:\sc/13);

\draw (0, -\sc) arc (0: 2*81.869897645844021:\sc/7) arc (2*81.869897645844021 - 180:2*80.537677791974383:\sc/43) arc (2*80.537677791974383-180:2*78.690067525979787:\sc/31);

\draw (-5*\sc/13, -12*\sc/13) arc (2*78.690067525979787-180:2*77.471192290848489:\sc/47) arc (2*77.471192290848489-180:2*77.005383208083496:\sc/119) arc (2*77.005383208083496-180:2*75.963756532073521:\sc/55);

\draw(-8*\sc/17, -15*\sc/17) arc (2*75.96375653207352-180: 2*74.744881296942224:\sc/47) arc (2*74.744881296942224-180:2*74.054604099077145:\sc/83) arc (2*74.054604099077145-180:2*71.565051177077989:\sc/23);

\draw(-3*\sc/5, -4*\sc/5) arc (2*71.565051177077989-180:2*69.443954780416536:\sc/27) arc (2*69.443954780416536-180:2*68.962488974578183:\sc/119) arc (2*68.962488974578183-180:2*68.198590513648188:\sc/75);

\draw(-20*\sc/29, -21*\sc/29) arc (2*68.198590513648188-180:2*67.619864948040426:\sc/99) arc (2*67.619864948040426-180:270-2*67.619864948040426:\sc/239) arc (90-2*67.619864948040426:270-2*68.198590513648188:\sc/99);

\draw (-21*\sc/29, -20*\sc/29)  arc (90-2*68.198590513648188:270-2*68.962488974578183:\sc/75) arc (90-2*68.962488974578183:270-2*69.443954780416536:\sc/119) arc (90-2*69.443954780416536:270-2*71.565051177077989:\sc/27);

\draw(-4*\sc/5, -3*\sc/5) arc (90-2*71.565051177077989:270-2*74.054604099077145:\sc/23) arc (90-2*74.054604099077145:270-2*74.744881296942224:\sc/83) arc (90-2*74.744881296942224:270-2*75.96375653207352:\sc/47);

\draw(-15*\sc/17, -8*\sc/17) arc (90-2*75.963756532073521:270-2*77.005383208083496:\sc/55) arc (90-2*77.005383208083496:270-2*77.471192290848489:\sc/119) arc (90-2*77.471192290848489:270-2*78.690067525979787:\sc/47);

\draw(-12*\sc/13, -5*\sc/13) arc (90-2*78.690067525979787:270-2*80.537677791974383:\sc/31) arc (90-2*80.537677791974383:270-2*81.869897645844021:\sc/43) arc (90-2*81.869897645844021:90:\sc/7);

\end{scope}

\draw[black!50!green, very thick] (0, 0) -- (0, \sc*0.014) arc (-90:0:\sc*0.486) -- (\sc*0.5, \sc*0.5) arc (180:90:\sc/6) arc (-90:0:\sc/30) arc (180:90:\sc/180);
\draw[rotate=180, black!50!green, very thick] (0, 0) arc (-90:0:\sc*0.5) arc (180:90:\sc/6) arc (-90:0:\sc/30) arc (180:90:\sc/180);
\draw[black!16!green, very thick] (\sc*0.5, \sc*0.5) arc (180:323:\sc/6) arc (143:307.5:\sc/14) arc (127.5:310:\sc/26);
\draw[black!16!green, very thick] (\sc*0.5, \sc*0.5) arc (0:-90:\sc*0.5) arc(90:0:\sc*0.5) arc (180:37:\sc/6) arc (217:52.5:\sc/14) arc (232.5:50:\sc/26);

\draw[thick, blue!80, ->] (-\sc*0.4*0.965925826289068, \sc*0.5-\sc*0.4*0.258819045102521) arc (195:345:\sc*0.4);
\draw[rotate=180, thick, red!80, ->] (-\sc*0.4*0.965925826289068, \sc*0.5-\sc*0.4*0.258819045102521) arc (195:345:\sc*0.4);
\draw (0,\sc*0.2) node {$\rho(T_{A})$};
\draw (0, -\sc*0.2) node {$\rho(T_{B})$};

\draw (0.45*\sc, 0.5*\sc) node {$a$};
\fill (0.5*\sc, 0.5*\sc) circle (0.05);
\fill (2*\sc/3, 2*\sc/3) circle (0.05);
\draw (0.98*\sc, 0.72*\sc) node {$ab^{-1}$};
\draw[thick] (2*\sc/3, 2*\sc/3) arc (210:330:\sc*0.17);

\end{tikzpicture}
\caption{Fundamental domains for $\langle \rho(T_{A}), \rho(T_{B})\rangle$ when $\mu = 4$ and the Cayley graph $\mathcal{T}$ of $F_{2}$ as a dual tree. Blue lines correspond to the (right) cosets of $\rho(T_{A})$ and red lines correspond to the (right) cosets of $\rho(T_{B})$. The invariant axis of an element in $\mathrm{Isom}(\mathcal{T})$ sending 0 to $ab^{-1}$ ($ab$, resp.) is shown as the dark (light, resp.) green line.}
\label{fig:FundTiling}
\end{figure}

We now make a 1-1 correspondence among the set $V(\mathcal{T})$ of vertices of $\mathcal{T}$, the set $\mathrm{Isom}(\mathcal{T})$ of isometries of $\mathcal{T}$, and the Deck transformation group $\langle \rho(T_A), \rho(T_B) \rangle \le \mathrm{Isom}^{+}(\mathbb{H}^2)$. For each reduced word $w$ of $\{a^{\pm}, b^{\pm}\}$, there exists a cyclically reduced word $v$ such that $w = cvc^{-1}$ for some word $c$. Then the translation $t(w)$ of $\mathcal{T}$ along the axis connecting $\{cv^{n}\}_{n \in \mathbb{Z}}$ by $v$ is the unique isometry of $\mathcal{T}$ sending $0$ to $w$. The corresponding word $w'$ of $\{T_{A}^{\pm}, T_{B}^{\pm}\}$ determines the isometry $\rho(w')$ of $\mathbb{H}^{2}$.

The axes of $t(w)$ and $\rho(w')$ are homotopic relative to $\partial \mathbb{H}^{2} = S^{1}$. When $w$ is a conjugate of a power of $a$ or $b$ ($a$, $b$, or $ab$ in the case of $\mu = 4$), the entire $\mathcal{T}$ is on the left/right side of the axis of $t(w)$, and $\rho(w')$ is parabolic. If not, $\rho(w')$ is hyperbolic. In this case, the edges of fundamental domains intersecting the axis of $t(w)$ and those intersecting the axis of $\rho(w')$ coincide.

Let us now consider a hyperbolic word $w = c v c^{-1}$ with cyclically reduced form $v = \alpha_{1} \alpha_{2} \cdots \alpha_{n}$ $(\alpha_{i}$'s are alphabets of $v$). Here, $n$ is called \emph{cyclically reduced word norm} of $w$. For each $l \in \Z$ and $k \in \{1, \ldots, n\}$, the invariant axis of $\rho(w')$ passes through the edge of the tiling that corresponds to $cv^{l} \alpha_{1} \cdots \alpha_{k}$. Let us denote this edge by $e_{ln+k}$. We then define a collection $\mathcal{J}$ of indices $j$ such that:
\begin{itemize} \item $e_{j-2} \rightarrow e_{j-1}$ was a left turn but $e_{j-1} \rightarrow e_{j}$ is a right turn, or \item$e_{j-2} \rightarrow e_{j-1}$ was a right turn but $e_{j-1} \rightarrow e_{j}$ is a left turn, or 
	\item $e_{j-1} \rightarrow e_{j}$ is a diagonal move.
\end{itemize} 
See Figure \ref{fig:moves} for details. Since $\rho(w')$ is not parabolic, $\mathcal{J}$ is not empty; it is periodic with a period $n$.

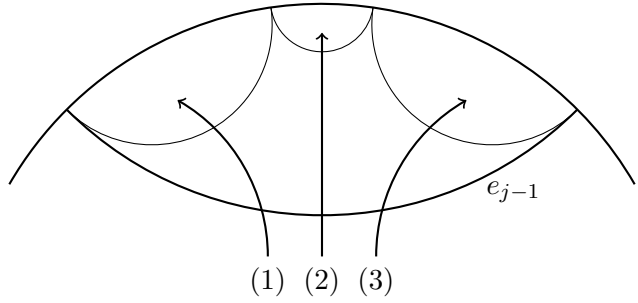
\begin{figure}[h]
	\def\sc{6}
	\begin{tikzpicture}[scale=0.8, every node/.style={scale=1}]
	\draw[rotate=30, thick] (\sc, 0) arc (0:120:\sc);
	\draw[thick, rotate=45] (\sc, 0) arc (270:180:\sc);
	\draw[rotate=225] (0, -\sc) arc (0:180-36.869897645844021:\sc/3);
	\draw[rotate=225] (-0.6*\sc, -0.8*\sc) arc (-36.869897645844021:90+36.869897645844021:\sc/7);
	\draw[rotate=225] (-\sc, 0) arc (90:-90+36.869897645844021:\sc/3);
	\draw[thick, ->] (-0.15*\sc, 0.3*\sc) arc (0:60:0.5*\sc);
	\draw[thick, ->] (0.15*\sc, 0.3*\sc) arc (180:120:0.5*\sc);
	\draw[thick, ->] (0*\sc, 0.3*\sc) -- (0*\sc, 0.92*\sc);
	\draw (0.53*\sc, 0.48*\sc) node {$e_{j-1}$};
	\draw (-0.15*\sc, 0.23*\sc) node {$(1)$};
	\draw (0*\sc, 0.23*\sc) node {$(2)$};
	\draw (0.15*\sc, 0.23*\sc) node {$(3)$};
	\end{tikzpicture}
	\caption{Consecutive edges passed through by the invariant axis. (1) left turn, (2) diagonal move, and (3) right turn.} \label{fig:moves}
\end{figure}

We now pick arbitrary $j_{0} \in \mathcal{J}$, and inductively collect $j_{i} \in \mathcal{J}$ by setting $j_{i+1} = \inf\{j \in \mathcal{J} : j \ge j_{i} + 2\}$ to form $\mathcal{J}'$. Then $\mathcal{J}'$ is also nonempty and is eventually periodic with a period $2n$. Suppose that $j_{1}, \ldots, j_{m} = j_{1} + 2n$ form a period of length $2n$.

Let us estimate $d_{\mathbb{H}^{2}}(e_{j_{k}}, e_{j_{k+1}})$ in the following two cases: \begin{enumerate}
\item $j_{k+1} = j_{k} + 2$: then $e_{j_{k}} \rightarrow e_{j_{k}+1} \rightarrow e_{j_{k+1}}$ is either a left turn after a right turn, a right turn after a left turn, or a diagonal move after any move. In either case, $d_{\mathbb{H}^2}(e_{j_{k}}, e_{j_{k+1}}) \ge \frac{1}{2}\log(3+2\sqrt{2})$.
\item $j_{k+1} \ge j_{k} + 3$: then $j \notin \mathcal{J}$ for $j_{k}+2 \le j < j_{k+1}$. This implies that $e_{j_{k}+1} \rightarrow e_{j_{k}+2}$ is either left or right turn, and $e_{j_{k}+1} \rightarrow e_{j_{k}+2} \rightarrow \cdots \rightarrow e_{j_{k+1} - 1}$ are turns of the same type, but $e_{j_{k+1} - 1} \rightarrow e_{j_{k+1}}$ is not in that type. Then the distance between $e_{j_{k}}$ and $e_{j_{k+1}}$ is at least $\frac{1}{2}\log (l + \sqrt{l^{2} - 1})$ where $l=2(j_{k+1} - j_{k} - 2) + 1 \ge j_{k+1} - j_{k}$. 
\end{enumerate}
As a result, the distance between $e_{j_{k}}$ and $e_{j_{k+1}}$ is greater than $1/2$ and ${1 \over 2}\log (j_{k+1} - j_{k})$. This gives the estimation \[
d_{\mathbb{H}^2}(e_{j_{1}}, e_{j_{m}}) \ge \sum_{k=1}^{m-1} d_{\mathbb{H}^2}(e_{j_{k}}, e_{j_{k+1}}) \ge \frac{1}{2} \sum_{k=1}^{m-1} \log (j_{k+1} - j_{k}).
\]
Since $j_{k+1} - j_k \ge 2$ and $f(x, y) =\log(x + y) - \log x - \log y$ has negative partial derivatives with $f(2, 2) = 0$, $\sum_{k = 1}^{m-1} \log (j_{k+1} - j_k) \ge \log \left( \sum_{k = 1}^{m-1} (j_{k+1} - j_k) \right)$. From $j_m = j_1 + 2n$, we conclude that \[
d_{\mathbb{H}^2}(e_{j_1}, e_{j_m}) \ge \frac{1}{2} \log (2n).
\]

Note that the translation length of $\rho(w')$ is equal to $\log \lambda_{\rho(w')}$ and is greater than $\frac{1}{2} d_{\mathbb{H}^{2}} (e_{j_1}, e_{j_{1} + 2n})$. As such, we deduce the following proposition.

\begin{prop} \label{prop:lowerboundtranslationlength}
	Let $A$ and $B$ be filling multicurves on $S$ such that $\langle T_A, T_B \rangle \cong F_2$. Then for a pseudo-Anosov $\varphi \in \langle T_A, T_B \rangle$ and its stretch factor $\lambda_{\varphi}$, we have $$\log \lambda_{\varphi} \ge \frac{1}{4} \log |\varphi|$$ where $|\cdot |$ is the cyclically reduced word norm in terms of $T_A^{\pm}$ and $T_B^{\pm}$.
\end{prop}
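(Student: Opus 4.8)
The plan is to assemble the hyperbolic-geometry estimates carried out in the discussion immediately preceding the statement. Fix a pseudo-Anosov $\varphi \in \langle T_A, T_B \rangle$; let $w'$ be a word in $T_A^{\pm}, T_B^{\pm}$ representing it and $w = cvc^{-1}$ the corresponding word in $\{a^{\pm}, b^{\pm}\}$ with $v$ cyclically reduced of length $n = |\varphi|$. By Theorem \ref{thm:thurstonconstruction}(1) the element $\rho(\varphi)$ is hyperbolic, hence has an invariant geodesic axis in $\mathbb{H}^2$; by part (3) of the same theorem, together with the identification of the Teichm\"uller disk with an isometric copy of $\mathbb{H}^2$, its translation length equals $\log \lambda_{\varphi}$. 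The combinatorial model developed above identifies the sequence $(e_j)$ of tiling edges crossed by this axis with the edges crossed by the axis of $t(w) \in \mathrm{Isom}(\mathcal{T})$ and shows that $\rho(w')$ carries $e_j$ to $e_{j+n}$; therefore $e_{j_1 + 2n} = \rho(w')^2 \cdot e_{j_1}$ again lies on the axis, and since the sub-arc of the axis joining these two edges has length twice the translation length, one obtains $\log \lambda_{\varphi} \ge \tfrac12\, d_{\mathbb{H}^2}(e_{j_1}, e_{j_1 + 2n})$.

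It then remains to feed in the lower bound for $d_{\mathbb{H}^2}(e_{j_1}, e_{j_1 + 2n})$ produced above by counting the turns of the axis along $\mathcal{J}'$. In the two-case analysis there --- consecutive indices of $\mathcal{J}'$ differing by exactly $2$, respectively by at least $3$ --- each segment $d_{\mathbb{H}^2}(e_{j_k}, e_{j_{k+1}})$ is bounded below by $\tfrac12 \log(j_{k+1}-j_k)$, the relevant constants $\tfrac12\log(3+2\sqrt{2})$ and $\tfrac12\log(l+\sqrt{l^2-1})$ being explicit distances between sides of the fundamental domains; summing over a period, invoking $\log(x+y) \le \log x + \log y$ for $x,y \ge 2$, and using $\sum_k (j_{k+1}-j_k) = j_m - j_1 = 2n$ gives $d_{\mathbb{H}^2}(e_{j_1}, e_{j_1 + 2n}) \ge \tfrac12 \log (2n)$. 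Putting the two bounds together,
\[
\log \lambda_{\varphi} \ \ge\ \tfrac12\, d_{\mathbb{H}^2}(e_{j_1}, e_{j_1 + 2n}) \ \ge\ \tfrac14 \log(2n) \ \ge\ \tfrac14 \log n \ =\ \tfrac14 \log |\varphi| ,
\]
which is the claim.

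The step carrying the real weight is the one already executed in the body text, namely the dictionary between the tree $\mathcal{T}$ (equivalently, reduced words in $F_2$) and the fundamental-domain tiling of $\mathbb{H}^2$: one must check that the invariant axis of $\rho(w')$ crosses precisely the tiles indexed by the $e_j$, that the ``left turn / right turn / diagonal move'' alternation of the axis is governed by membership in $\mathcal{J}$, and that each such local configuration forces a definite amount of hyperbolic displacement, quantified by the elementary distance computations in the fundamental domains of Figures \ref{fig:Fund1} and \ref{fig:FundTiling}. Granting that correspondence and those computations, the proposition is the routine accounting above, with no new analytic or algebraic input beyond the concavity of the logarithm; the sole obstacle is the care required in setting up and tracking that correspondence.
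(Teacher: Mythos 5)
Your proposal is correct and follows the paper's own proof: it identifies the translation length of $\rho(\varphi)$ with $\log\lambda_\varphi$, reduces the estimate to the distance $d_{\mathbb{H}^2}(e_{j_1},e_{j_1+2n})$ via the periodicity of the crossed edges under $\rho(w')$, and invokes the turn-counting lower bound $d_{\mathbb{H}^2}(e_{j_1},e_{j_1+2n})\ge\tfrac12\log(2n)$ developed in the preceding discussion, exactly as the paper does. The only cosmetic slip is attributing $\log(x+y)\le\log x+\log y$ for $x,y\ge 2$ to ``concavity of the logarithm''; this inequality is equivalent to $(x-1)(y-1)\ge 1$ rather than a consequence of concavity, and the paper instead verifies it by checking signs of partial derivatives of $\log(x+y)-\log x-\log y$ from the base point $(2,2)$.
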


%%%%%%%%%%%%%%%%%%%%%%%%%%%%%%%%%%%%%%%%%%%%%%%%
%
%				Asymptotic behavior
%
%%%%%%%%%%%%%%%%%%%%%%%%%%%%%%%%%%%%%%%%%%%%%%%%

\medskip
\section{Asymptotic behavior of random walks obtained from Thurston's construction}	\label{sec:asympbehav}

\subsection{Introduction to the theory of topological entropy} \label{subsec:topent}

Before stating the main result, let us introduce the theory of topological entropy. The topological entropy, defined below, is a quantity representing the dynamics of a continuous map $f:X \to X$ on a compact Hausdorff space $X$.

\begin{definition}[Topological entropy, \cite{adler1965topological}] \label{def:topent}
	Let $X$ be a compact Hausdorff topological space, and $f:X \to X$ be a continuous map. For any open cover $\U$ of $X$, let us denote the minimal size of subcover of $\U$ by $N(\U)$, which is a finite integer. Then, the following limit exists: $$H(f, \U) := \lim_{n \to \infty} {1 \over n} \log  N \left( \U \vee f^{-1}(\U) \vee \cdots \vee f^{-(n-1)}(\U)\right)$$ where $\mathcal{A} \vee \mathcal{B} := \{A \cap B : A \in \mathcal{A}, B \in \mathcal{B}\}$. The \emph{topological entropy of $f$} is defined as $$h(f) := \sup_{\U} H(f, \U).$$
\end{definition}

In particular, one can consider the topological entropy of a self-homeomorphism $\varphi$ of a surface $S$. Regarding this, Thurston proved the following.

\begin{thm}[Thurston, {\cite[Expos\'e 10]{FLP}}] \label{thm:thurstonentropy}
	For a pseudo-Anosov homeomorphism $\varphi:S \to S$ of a surface $S$, $$h(\varphi) = \log \lambda_{\varphi}$$ where $\lambda_{\varphi}$ is its stretch factor. Moreover, it minimizes the topological entropy in its isotopy class.
\end{thm}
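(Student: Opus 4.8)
The plan is to establish the two assertions in turn: the identity $h(\varphi)=\log\lambda_{\varphi}$ and then the minimization property. Recall that a pseudo-Anosov $\varphi$ carries a transverse pair of measured foliations $(\mathcal{F}^{u},\mu^{u})$, $(\mathcal{F}^{s},\mu^{s})$ with $\varphi\cdot(\mathcal{F}^{u},\mu^{u})=(\mathcal{F}^{u},\lambda\mu^{u})$ and $\varphi\cdot(\mathcal{F}^{s},\mu^{s})=(\mathcal{F}^{s},\lambda^{-1}\mu^{s})$, where $\lambda=\lambda_{\varphi}>1$. Pairing $\mu^{u}$ with $\mu^{s}$ produces a singular flat metric $d$ on $S$, with finitely many cone points of angle a positive integer multiple of $\pi$, in which $\varphi$ is affine with constant linear part $\mathrm{diag}(\lambda,\lambda^{-1})$; in particular $\varphi$ preserves the total area.

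For $h(\varphi)=\log\lambda$ I would use the Markov partition of $S$ by rectangles adapted to $(\mathcal{F}^{s},\mathcal{F}^{u})$: it yields a subshift of finite type $(\Sigma_{M},\sigma)$ with transition matrix $M$, together with a continuous surjection $\pi\colon\Sigma_{M}\to S$ that is boundedly finite-to-one and satisfies $\varphi\circ\pi=\pi\circ\sigma$. Being a topological factor gives $h(\varphi)\le h(\sigma)$; being boundedly finite-to-one gives the reverse inequality via Bowen's fiber-entropy theorem (the point preimages are finite, hence of zero entropy), so $h(\varphi)=h(\sigma)=\log\lambda(M)$, the logarithm of the Perron--Frobenius eigenvalue of $M$. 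Tracking the $\mu^{s}$-heights (or $\mu^{u}$-widths) of the partition rectangles under $\varphi$ shows the height vector is a positive eigenvector of $M^{t}$ with eigenvalue $\lambda$, whence $\lambda(M)=\lambda=\lambda_{\varphi}$. (If one prefers, the upper bound $h(\varphi)\le\log\lambda$ also follows directly from a Bowen $(n,\varepsilon)$-spanning-set count in the flat metric: the dynamical ball $B_{n}(x,\varepsilon)$ contains a rectangle of area $\gtrsim\varepsilon^{2}\lambda^{-(n-1)}$ since $\varphi^{j}$ expands the $\mathcal{F}^{u}$-direction by $\lambda^{j}$ and contracts the $\mathcal{F}^{s}$-direction by $\lambda^{-j}$, so finiteness of $\mathrm{area}(S)$ forces $r_{n}(\varepsilon)=O(\lambda^{n}\varepsilon^{-2})$; and the lower bound $h(\varphi)\ge\log\lambda$ also follows from the variational principle applied to the $\varphi$-invariant ergodic measure $\mu^{s}\otimes\mu^{u}$, whose metric entropy one computes to be $\log\lambda$ by the Rokhlin formula along the one-dimensional unstable foliation.)

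For the minimization statement, let $\psi$ be isotopic to $\varphi$. I would invoke the general lower bound $h(\psi)\ge\limsup_{n\to\infty}\tfrac1n\log i(\psi^{n}\alpha,\beta)$, valid for any homeomorphism of $S$ and any essential simple closed curves $\alpha,\beta$. Since $\psi\simeq\varphi$ we have $i(\psi^{n}\alpha,\beta)=i(\varphi^{n}\alpha,\beta)$, and choosing $\alpha$ different from the classes of $\mathcal{F}^{s}$ and $\mathcal{F}^{u}$, the sequence $\varphi^{n}\alpha$ converges in $\PMF(S)$ to $[\mathcal{F}^{u}]$ with representative length comparable to $\lambda^{n}$; by continuity and homogeneity of the geometric intersection pairing, $i(\varphi^{n}\alpha,\beta)\asymp\lambda^{n}$ for suitable $\beta$. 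Hence $h(\psi)\ge\log\lambda=h(\varphi)$, so $\varphi$ realizes the minimum of topological entropy in its isotopy class.

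The technical heart, and the main obstacle, is the construction and combinatorial bookkeeping of the Markov partition subordinate to $(\mathcal{F}^{s},\mathcal{F}^{u})$: one must decompose $S$ into finitely many foliation rectangles meeting only along boundary leaves, verify that each $\varphi(R_{i})$ crosses the rectangles $R_{j}$ cleanly (the Markov property), and deal carefully with the cone singularities of the flat metric. If one instead routes the lower bound through the variational principle, the difficulty migrates to making the Pesin/Rokhlin entropy computation for $\mu^{s}\otimes\mu^{u}$ rigorous in the presence of singularities, together with a careful proof of the entropy--intersection-number inequality used for minimality.
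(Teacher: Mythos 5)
The paper does not give its own argument for this theorem: it is stated as a citation to FLP, \emph{Travaux de Thurston sur les surfaces}, Expos\'e~10, and is used throughout the paper as a black box. So there is no ``paper's proof'' to compare against; what I can do is assess your sketch on its own terms and against the standard reference.

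Your outline is in fact the standard FLP argument, and it is essentially correct. The computation $h(\varphi)=\log\lambda_{\varphi}$ via a Markov partition adapted to the invariant foliations is exactly the route taken in Expos\'e~10: one builds a subshift of finite type factoring onto $(S,\varphi)$, uses the factor inequality together with Bowen's fiber-entropy bound (finite fibers) to get $h(\varphi)=h(\sigma_{M})$, and identifies the Perron--Frobenius eigenvalue of $M$ with $\lambda_{\varphi}$ by tracking transverse measures of the rectangles. Your alternative routes --- the Bowen $(n,\varepsilon)$-spanning count in the flat metric for the upper bound, and the variational principle applied to the Lebesgue measure $\mu^{s}\otimes\mu^{u}$ for the lower bound --- are also legitimate and well known; they trade the combinatorial bookkeeping of the Markov partition for a direct geometric/ergodic argument, which some readers find cleaner but which shifts the technical burden to handling the cone singularities of the flat structure, as you rightly flag. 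The minimality statement via the inequality $h(\psi)\ge\limsup_{n}\tfrac1n\log i(\psi^{n}\alpha,\beta)$ combined with the exponential growth $i(\varphi^{n}\alpha,\beta)\asymp\lambda_{\varphi}^{n}$ is again the FLP argument. One small imprecision: you ask to choose $\alpha$ ``different from the classes of $\mathcal{F}^{s}$ and $\mathcal{F}^{u}$,'' but since the invariant foliations of a pseudo-Anosov are arational (minimal and filling), no essential simple closed curve is projectively equivalent to them, so this restriction is vacuous --- any essential $\alpha$ works, and any essential $\beta$ has $i(\beta,\mathcal{F}^{u})>0$, which is what the growth estimate really needs. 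You have correctly identified the genuine technical work (constructing the Markov partition, or alternatively making the entropy computation rigorous at singularities, and proving the entropy--intersection inequality); none of the steps you sketch would fail.
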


Furthermore, as $\Mod(S)$ acts on the Teichm\"uller space $\T(S)$ of $S$, one can ask the dynamics of pseudo-Anosovs on $S$ appear in $\T(S)$, in the perspective of the topological entropy. Bers' classification of mapping classes according to the action on $\T(S)$ helps to answer this question.

\begin{thm}[Bers, \cite{bers1978extremal}] \label{thm:bers}
	Let $\varphi:S \to S$ be a pseudo-Anosov homeomorphism. Then the translation length of its action on $\T(S)$ is realized, which is equal to $\log \lambda_{\varphi}$. In other words, there exists $\X_{\varphi} \in \T(S)$ such that $$\log \lambda_{\varphi} = d_{\T}(\X_{\varphi}, \varphi \cdot \X_{\varphi}) = \inf_{\mathcal{Y} \in \T(S)} d_{\T}(\mathcal{Y}, \varphi \cdot \mathcal{Y})$$ where $d_{\T}$ is the Teichm\"uller distance on $\T(S)$.
\end{thm}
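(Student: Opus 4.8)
The plan is to exhibit a bi-infinite geodesic in $(\T(S), d_{\T})$ that is preserved by $\varphi$ and along which $\varphi$ translates by exactly $\log\lambda_{\varphi}$, and then to deduce by a soft subadditivity argument that no point of $\T(S)$ is moved a shorter $d_{\T}$-distance. First I would use the defining structure of a pseudo-Anosov: $\varphi$ preserves a transverse pair of measured foliations $(\mathcal{F}^{u},\mathcal{F}^{s})$ (unstable and stable) with $\varphi\cdot\mathcal{F}^{u}=\lambda_{\varphi}\mathcal{F}^{u}$ and $\varphi\cdot\mathcal{F}^{s}=\lambda_{\varphi}^{-1}\mathcal{F}^{s}$. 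This pair is realized as the horizontal and vertical foliations of a holomorphic quadratic differential $q_{\varphi}$ on a Riemann surface $X_{\varphi}$, which determines a point $\X_{\varphi}\in\T(S)$; with respect to the singular flat metric $|q_{\varphi}|$ the homeomorphism $\varphi$ is affine with linear part $\operatorname{diag}(\lambda_{\varphi},\lambda_{\varphi}^{-1})$.

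Next I would push $(X_{\varphi},q_{\varphi})$ along the Teichm\"uller geodesic flow: for $t\in\R$ let $\X_{\varphi}(t)\in\T(S)$ be the surface obtained from $q_{\varphi}$ by scaling the horizontal direction by $e^{t}$ and the vertical direction by $e^{-t}$, with $\X_{\varphi}(0)=\X_{\varphi}$. By Teichm\"uller's theorem these Teichm\"uller maps compose, so (with the normalization $d_{\T}=\tfrac12\log K$) the path $t\mapsto\X_{\varphi}(t)$ is a unit-speed bi-infinite geodesic, and comparing flat structures shows $\varphi\cdot\X_{\varphi}(t)=\X_{\varphi}(t+\log\lambda_{\varphi})$ for a suitable orientation of the geodesic. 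Hence $\varphi$ preserves this geodesic and translates it by $\log\lambda_{\varphi}$, so $d_{\T}(\X_{\varphi},\varphi\cdot\X_{\varphi})=\log\lambda_{\varphi}$ and, since a geodesic segment realizes the distance between its endpoints, $d_{\T}(\X_{\varphi},\varphi^{n}\cdot\X_{\varphi})=n\log\lambda_{\varphi}$ for all $n\geq 1$. In particular $\inf_{\mathcal{Y}}d_{\T}(\mathcal{Y},\varphi\cdot\mathcal{Y})\leq\log\lambda_{\varphi}$, and the value $\log\lambda_{\varphi}$ is attained at $\X_{\varphi}$.

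For the matching lower bound I would exploit that $\varphi$ acts on $\T(S)$ by $d_{\T}$-isometries, so for any basepoint $\mathcal{Y}$ the sequence $a_{n}(\mathcal{Y}):=d_{\T}(\mathcal{Y},\varphi^{n}\cdot\mathcal{Y})$ is subadditive; therefore $a_{n}(\mathcal{Y})/n\to\ell:=\inf_{m}a_{m}(\mathcal{Y})/m$, and the estimate $|a_{n}(\mathcal{Y})-a_{n}(\X_{\varphi})|\leq 2\,d_{\T}(\mathcal{Y},\X_{\varphi})$ shows that $\ell$ does not depend on $\mathcal{Y}$. Evaluating at $\X_{\varphi}$, where $a_{n}(\X_{\varphi})=n\log\lambda_{\varphi}$, gives $\ell=\log\lambda_{\varphi}$. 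Since $a_{1}(\mathcal{Y})\geq\inf_{m}a_{m}(\mathcal{Y})/m=\ell$, we obtain $d_{\T}(\mathcal{Y},\varphi\cdot\mathcal{Y})\geq\log\lambda_{\varphi}$ for every $\mathcal{Y}\in\T(S)$; combined with the previous paragraph this proves that the translation length equals $\log\lambda_{\varphi}$ and is realized, namely on the axis $\{\X_{\varphi}(t)\}_{t\in\R}$.

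The main obstacle is the Teichm\"uller-theoretic input of the second paragraph: verifying that the $\operatorname{SL}(2,\R)$-deformation of $(X_{\varphi},q_{\varphi})$ is genuinely a $d_{\T}$-geodesic with the correct unit-speed parametrization, and that the affine action of $\varphi$ translates it by precisely $\log\lambda_{\varphi}$ rather than some constant multiple of it. This is where the normalization conventions (the factor $\tfrac12$ in $d_{\T}=\tfrac12\log K$, and the curvature $-4$ model of the Teichm\"uller disk used elsewhere in the paper) must be tracked carefully; once those structural facts are in place, everything else is formal. Alternatively, one could replace the subadditivity argument for the lower bound by Kerckhoff's formula $d_{\T}(X,Y)=\tfrac12\log\sup_{c}\operatorname{Ext}_{Y}(c)/\operatorname{Ext}_{X}(c)$ together with the fact that the extremal length of $\varphi^{-n}c$ grows like $\lambda_{\varphi}^{2n}$, but this requires essentially the same structural input.
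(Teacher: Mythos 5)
The paper does not prove this theorem: it is cited directly from Bers' 1978 paper (\cite{bers1978extremal}), so there is no in-paper proof to compare against. That said, your proposal is a correct and self-contained argument and is worth assessing on its own merits. Your construction of the invariant Teichm\"uller geodesic from the quadratic differential $q_{\varphi}$ and the verification that $\varphi$ translates along it by $\log\lambda_{\varphi}$ (with $d_{\T}=\tfrac12\log K$, the affine map of dilatation $K=\lambda_{\varphi}^{2}$ gives exactly distance $\log\lambda_{\varphi}$) is the standard Teichm\"uller-theoretic input that Bers also relies on. Where you depart from Bers' original variational proof is in the lower bound: rather than setting up an extremal problem and invoking compactness/properness, you observe that $a_{n}(\mathcal{Y})=d_{\T}(\mathcal{Y},\varphi^{n}\mathcal{Y})$ is subadditive (since $\varphi$ is an isometry), apply Fekete's lemma to get $a_{n}/n\to\ell=\inf_{m}a_{m}/m$, note $\ell$ is basepoint-independent via the $2d_{\T}(\mathcal{Y},\X_{\varphi})$ estimate, compute $\ell=\log\lambda_{\varphi}$ on the axis, and conclude $a_{1}(\mathcal{Y})\geq\ell$ for every $\mathcal{Y}$. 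This is correct and quite a bit softer than Bers' approach: it isolates the only genuinely Teichm\"uller-theoretic fact (the existence of the axis with the right translation speed) and derives the minimality of the axis point by pure metric-space reasoning. The one place to be careful, which you flag yourself, is the normalization: one must check that the $\operatorname{diag}(e^{t},e^{-t})$ flow is unit-speed for $d_{\T}=\tfrac12\log K$ and that $\varphi\cdot\X_{\varphi}(t)=\X_{\varphi}(t+\log\lambda_{\varphi})$, both of which hold under that convention; the paper's later remark about a curvature $-4$ Teichm\"uller disk for Thurston's construction is a separate normalization for the hyperbolic metric on the disk itself and does not affect this theorem. Incidentally, the paper's subsequent proof of Corollary~\ref{cor:entformula} uses essentially the same triangle-inequality bookkeeping around the axis that your lower-bound argument encapsulates, so your proof fits naturally with how the result is used.
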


Here $\X_{\varphi}$ lies on the invariant geodesic $\Gamma_{\varphi}$ of $\varphi$ in $\T(S)$, on which $\varphi$ acts  as a translation. Combining Theorem \ref{thm:thurstonentropy} and Theorem \ref{thm:bers} yields the following.

\begin{cor} \label{cor:entformula}
	Let $\varphi : S\to S$ be a pseudo-Anosov homeomorphism. Then for any $\X \in \T(S)$, we have $$h(\varphi) = \log \lambda_{\varphi} = \lim_{n \to \infty} {1 \over n } d_{\T}(\X, \varphi^{n} \cdot \X).$$
\end{cor}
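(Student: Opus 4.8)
The first equality $h(\varphi) = \log\lambda_\varphi$ is exactly the content of Theorem \ref{thm:thurstonentropy}, so the whole task reduces to identifying $\log\lambda_\varphi$ with the asymptotic translation rate $\lim_{n\to\infty}\frac{1}{n}d_{\T}(\X,\varphi^n\cdot\X)$ for an arbitrary basepoint $\X\in\T(S)$. My plan is to split this into two routine steps: (i) show the limit exists and does not depend on $\X$, and (ii) evaluate it by choosing the distinguished basepoint provided by Theorem \ref{thm:bers}.

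For step (i), I would use that $\Mod(S)$ acts on $(\T(S),d_{\T})$ by isometries, so the sequence $a_n := d_{\T}(\X,\varphi^n\cdot\X)$ is subadditive: by the triangle inequality and isometry invariance,
\[
a_{n+m} \le d_{\T}(\X,\varphi^n\cdot\X) + d_{\T}(\varphi^n\cdot\X,\varphi^{n+m}\cdot\X) = a_n + a_m.
\]
Hence by Fekete's lemma $\lim_{n\to\infty}\frac{1}{n}a_n$ exists and equals $\inf_n \frac{1}{n}a_n$. Independence of the basepoint then follows from the elementary estimate $|d_{\T}(\X,\varphi^n\cdot\X) - d_{\T}(\mathcal{Y},\varphi^n\cdot\mathcal{Y})| \le 2\,d_{\T}(\X,\mathcal{Y})$, obtained again from the triangle inequality and isometry invariance; dividing by $n$ and letting $n\to\infty$ shows the limit is the same for $\X$ and $\mathcal{Y}$.

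For step (ii), I would take $\X = \X_\varphi$, the point on the invariant geodesic $\Gamma_\varphi$ furnished by Theorem \ref{thm:bers}, on which $\varphi$ acts as a translation by $\log\lambda_\varphi$. Then $\varphi^n$ acts on the geodesic line $\Gamma_\varphi$ as a translation by $n\log\lambda_\varphi$, and since $\X_\varphi,\varphi\cdot\X_\varphi,\varphi^2\cdot\X_\varphi,\dots$ all lie on $\Gamma_\varphi$ in order, we get $d_{\T}(\X_\varphi,\varphi^n\cdot\X_\varphi) = n\log\lambda_\varphi$ exactly. Therefore $\lim_{n\to\infty}\frac{1}{n}d_{\T}(\X_\varphi,\varphi^n\cdot\X_\varphi) = \log\lambda_\varphi$, and by the basepoint-independence from step (i) the same limit holds for every $\X\in\T(S)$. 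Combining with $h(\varphi)=\log\lambda_\varphi$ from Theorem \ref{thm:thurstonentropy} completes the proof.

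I do not expect a genuine obstacle here — this is a corollary assembled from the two quoted theorems. The only point requiring a word of care is the claim that $\varphi^n$ translates along $\Gamma_\varphi$ by precisely $n\log\lambda_\varphi$, i.e. that consecutive iterates of $\X_\varphi$ stay on a single geodesic line so that distances add; this is part of the structure recorded just after Theorem \ref{thm:bers} ($\Gamma_\varphi$ is $\varphi$-invariant and $\varphi$ acts on it as a translation), and it is what makes the infimum in Bers' theorem an honest minimum realized along $\Gamma_\varphi$.
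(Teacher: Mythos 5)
Your proof is correct, and it takes a mildly different route from the paper. The paper proves the two inequalities directly: for the lower bound it uses the infimum characterization in Theorem \ref{thm:bers} to get $n\log\lambda_\varphi = \log\lambda_{\varphi^n} \le d_{\T}(\X,\varphi^n\cdot\X)$ for every $\X$ and every $n$, hence $\log\lambda_\varphi \le \liminf_n \frac{1}{n}d_{\T}(\X,\varphi^n\cdot\X)$; for the upper bound it inserts $\X_\varphi$ via the triangle inequality to get $d_{\T}(\X,\varphi^n\cdot\X) \le 2d_{\T}(\X,\X_\varphi) + n\log\lambda_\varphi$, hence $\limsup_n \frac{1}{n}d_{\T}(\X,\varphi^n\cdot\X) \le \log\lambda_\varphi$. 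You instead invoke Fekete's lemma to get existence of the limit from subadditivity, prove basepoint independence from a coarse triangle-inequality estimate, and then evaluate at the single point $\X_\varphi$, where the exact identity $d_{\T}(\X_\varphi,\varphi^n\cdot\X_\varphi) = n\log\lambda_\varphi$ holds because $\varphi$ translates along the geodesic $\Gamma_\varphi$. The conceptual gain of your route is that it sidesteps any use of the \emph{infimum} part of Bers' theorem---you never need $\log\lambda_{\varphi^n} \le d_{\T}(\X,\varphi^n\cdot\X)$ for a general $\X$, since Fekete plus basepoint independence lets you reduce the whole computation to the one point where the distance is explicit. The paper's route is slightly shorter since it avoids introducing Fekete's lemma and the basepoint-independence estimate as named intermediate facts, but both arguments rest on the same underlying geometry (isometric action, and the $\varphi$-invariant axis through $\X_\varphi$).
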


\begin{proof}
	It suffices to prove the second equality, since the first equality follows from Theorem \ref{thm:thurstonentropy}. For any $\X\in \T(S)$ and $n \in \N$, $$n \log \lambda_{\varphi} = \log \lambda_{\varphi^n} \le d_{\T}(\X, \varphi^n \cdot \X)$$ by Theorem \ref{thm:bers}. Accordingly, we obtain $$\log \lambda_{\varphi} \le \liminf_{n \to \infty} {1 \over n }d_{\T}(\X, \varphi^n \cdot \X).$$
	
	To prove the opposite inequality, we observe $$d_{\T}(\X, \varphi^n \cdot \X) \le d_{\T}(\X, \X_{\varphi}) + d_{\T}(\X_{\varphi}, \varphi^n \cdot \X_{\varphi}) + d_{\T}(\varphi^n \cdot \X_{\varphi}, \varphi^n \cdot \X)$$ where $\X_{\varphi} \in \Gamma_{\varphi}$ realizes the translation length as in Theorem \ref{thm:bers}. Since $\varphi$ acts on $\T(S)$ as an isometry and acts on $\Gamma_{\varphi}$ as a translation, we have $$\begin{aligned} d_{\T}(\X, \varphi^n \cdot \X) & \le 2d_{\T}(\X, \X_{\varphi}) + n d_{\T}(\X_{\varphi}, \varphi \cdot \X_{\varphi}) \\ & = 2d_{\T}(\X, \X_{\varphi}) + n \log \lambda_{\varphi} \end{aligned}$$ Therefore, we get $$\limsup_{n \to \infty} {1 \over n}d_{\T}(\X, \varphi^n \cdot \X) \le \log \lambda_{\varphi}$$ which completes the proof.
\end{proof}

From Definition \ref{def:topent} and Corollary \ref{cor:entformula}, the topological entropy of a homeomorphism $\varphi$ indicates the dynamical behavior of the sequence of iterations $\varphi, \varphi^2, \ldots, \varphi^n, \ldots$. Recall that a pseudo-Anosov map $\varphi$ minimizes the topological entropy in its isotopy class $[\varphi]$. From now on, we define the topological entropy of $[\varphi]$ by that of $\varphi$. In this regard, the notion of topological entropy can be generalized by considering the dynamical behavior of a sequence $\varphi_1, \varphi_2, \ldots, \varphi_n, \ldots$ in $\Mod(S)$.

\begin{definition}[Topological entropy of a sequence in $\Mod(S)$, \cite{masai2018topological}]
	Let $\varphi = (\varphi_n) \in \Mod(S)^{\N}$ be a sequence of mapping classes and $\phi_n \in \mathrm{Diff}^+(S)$ be a representative of $\varphi_n$ for each $n \in \N$. Let $\phi = (\phi_n)$ be the sequence of the chosen representatives.
	
	For an open cover $\U$, we define $$H(\phi, \U) :=  \limsup_{n \to \infty}  {1 \over n}\log N\left( \U \vee \phi_1(\U) \vee \cdots \vee \phi_{n-1}(\U)\right)$$ where $N(\cdot)$ and $\vee$ are defined as in Definition \ref{def:topent}. Then we define the topological entropy of $\phi$ by $$h(\phi) := \sup_{\U} H(\phi, \U).$$ Finally, the \emph{topological entropy of $\varphi$} is defined by $$h(\varphi) := \inf_{\phi} h(\phi).$$
\end{definition}

\begin{remark}
	For a pseudo-Anosov mapping class, its iteration sequence has the same topological entropy as its pseudo-Anosov representative.
\end{remark}

With this definition, Masai proved a random version of Corollary \ref{cor:entformula}.

\begin{thm}[Masai, \cite{masai2018topological}] \label{thm:masai}
	Let $\nu$ be a probability measure on $\Mod(S)$ such that \begin{itemize}
		\item $\nu$ has a finite first moment with respect to the Teichm\"uller metric on $\T(S)$, i.e. for any $\X \in \T(S)$, $$\sum_{g \in \Mod(S)} \nu(g) d_{\T}(\X, g \cdot \X) < \infty,$$ and 
		\item $\nu$ is non-elementary, i.e. $\langle  \supp \nu \rangle$ is a non-elementary subgroup of $\Mod(S)$.
	\end{itemize} Let $\Prob$ be the probability measure on $\Mod(S)^{\N}$ induced by $\nu$. Then for $\Prob-a.e.$ $\w = (\w_n) \in \Mod(S)^{\N}$ and any $\X \in \T(S)$, $$h(\w) = \lim_{n \to \infty} {1 \over n} d_{\T}(\X, \w_n \cdot \X)$$ and it does not depend on $\w \in \Mod(S)^{\N}$. In other words, $h(\w)$ is almost surely constant.
\end{thm}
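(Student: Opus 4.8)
The plan is to prove the two assertions of the theorem in turn: that $L_{\T}:=\lim_{n}\tfrac1n d_{\T}(\X,\w_n\cdot\X)$ exists and is a non-random constant, and that $h(\w)=L_{\T}$ almost surely. For the first part I would apply Kingman's subadditive ergodic theorem to the cocycle $a_n(\w):=d_{\T}(\X,\w_n\cdot\X)$. Writing $\w_n=g_1g_2\cdots g_n$ with increments $g_i=\w_{i-1}^{-1}\w_i$ i.i.d.\ of law $\nu$, the triangle inequality together with the $\Mod(S)$-invariance of $d_{\T}$ gives $a_{n+m}(\w)\le a_n(\w)+a_m(\sigma^n\w)$ for the shift $\sigma$ on increment sequences, so $(a_n)$ is a subadditive process over the (ergodic) Bernoulli shift; the finite first moment hypothesis is precisely $\E[a_1]<\infty$. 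Kingman then yields $\tfrac1n a_n\to L_{\T}=\inf_n\tfrac1n\E[a_n]\ge 0$ almost surely and deterministically. (Positivity of $L_{\T}$ would follow from non-elementarity, but it is not needed: if $L_{\T}=0$ the two-sided bounds below force $h(\w)=0$.)

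For $h(\w)\le L_{\T}$ I would invoke the volume-growth description of topological entropy in dimension two: for a sequence of smooth representatives $\phi_n$ of $\w_n$, $h(\phi)$ is bounded above by $\limsup_n\tfrac1n\log(\text{maximal infinitesimal stretch of }\phi_n)$, because the length of the $\phi_n$-image of a fixed curve, and hence the number of $(n,\epsilon)$-separated points, grows at most at that rate. Realizing $\w_n$ by (a smoothing of) the Teichm\"uller map from $\X$ to $\w_n\cdot\X$, the maximal stretch is $e^{d_{\T}(\X,\w_n\cdot\X)}$ in the normalization used here — this is the normalization making Corollary~\ref{cor:entformula} read $h(\varphi)=\log\lambda_\varphi$ with no extra constant, the point being that a Teichm\"uller map stretches by $e^{d_{\T}}$ along one foliation and contracts by $e^{-d_{\T}}$ along the transverse one, so area and curve-length growth see only the single factor $e^{d_{\T}}$. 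Taking the infimum over representatives and using $\limsup_n\tfrac1n d_{\T}(\X,\w_n\cdot\X)=L_{\T}$ gives the upper bound.

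The reverse inequality $h(\w)\ge L_{\T}$ is the substantive part, and I would get it from two ingredients. First, a general lower bound: for any smooth representatives $\phi_n$ of $\w_n$ and any essential simple closed curve $\gamma$, $h(\phi)\ge\limsup_n\tfrac1n\log\ell_{\X}(\w_n\gamma)$, since forcing the geodesic representative of $\w_n\gamma$ to have hyperbolic length $\asymp e^{Ln}$ produces $\gtrsim e^{Ln}$ distinguishable, hence $(n,\epsilon)$-separated, orbit pieces; passing to the infimum over $\phi$ gives $h(\w)\ge\limsup_n\tfrac1n\log\ell_{\X}(\w_n\gamma)$. Second, I would estimate $\ell_{\X}(\w_n\gamma)=\ell_{\w_n^{-1}\cdot\X}(\gamma)$ via the boundary behaviour of the reflected walk $\w_n^{-1}$: by the Kaimanovich--Masur theory and non-elementarity, the harmonic measure on $\PMF(S)$ is non-atomic and supported on uniquely ergodic foliations, so $\Prob$-a.e.\ sample path of $\w_n^{-1}$ converges in the Thurston compactification to such a $\check\xi$, while $d_{\T}(\X,\w_n^{-1}\cdot\X)=d_{\T}(\X,\w_n\cdot\X)\sim L_{\T}n$ and, by finite first moment, the orbit sublinearly tracks the Teichm\"uller ray toward $\check\xi$. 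Unique ergodicity of $\check\xi$ keeps that ray in the thick part (Masur's criterion), so extremal and hyperbolic length of the fixed curve $\gamma$ stay comparable along it and $\ell_{\w_n^{-1}\cdot\X}(\gamma)\asymp e^{d_{\T}(\X,\w_n^{-1}\cdot\X)}$; hence $\tfrac1n\log\ell_{\X}(\w_n\gamma)\to L_{\T}$ almost surely, giving $h(\w)\ge L_{\T}$ and completing the proof.

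I expect the main obstacle to be this lower bound, on two fronts. One is importing the random-walk input under only a finite \emph{first} moment: almost sure convergence of sample paths of the reflected walk to a uniquely ergodic point of $\PMF(S)$, and sublinear tracking of Teichm\"uller geodesics at that moment level. The other is proving carefully the inequality relating $h$ of a \emph{sequence} of mapping classes — with the infimum taken over all smooth representatives — to the exponential growth rate of curve lengths; this is folklore for a single pseudo-Anosov map but must be redone in the non-stationary setting, and it is precisely unique ergodicity, rather than mere minimality or filling of $\check\xi$, that upgrades the resulting $\limsup$ to an honest limit equal to $L_{\T}$.
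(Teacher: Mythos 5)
The paper does not prove this theorem: it is quoted verbatim as a black box from Masai \cite{masai2018topological}, so there is no in-paper argument to compare your proposal against. Your sketch is nonetheless a reasonable reconstruction of the strategy that underlies Masai's result: Kingman's subadditive ergodic theorem for the existence and almost-sure constancy of $L_{\T}$; an upper bound on the open-cover entropy by taking Teichm\"uller-map representatives and bounding their distortion by $e^{d_{\T}(\X, \w_n \cdot \X)}$; and a lower bound via curve-length growth $\ell_{\X}(\w_n\gamma)$ together with the Kaimanovich--Masur boundary theory (convergence of $\Prob$-a.e.\ sample path of the reflected walk to a uniquely ergodic point of $\PMF(S)$, sublinear tracking of Teichm\"uller geodesics, and Masur's recurrence criterion to compare extremal and hyperbolic lengths). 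These are indeed the ingredients one needs at the finite first moment level.

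Two places in your sketch would need to be tightened before this could be called a proof. First, the upper bound: the quantity $N(\U \vee \phi_1(\U) \vee \cdots \vee \phi_{n-1}(\U))$ in Masai's definition involves all of $\phi_1, \ldots, \phi_{n-1}$ simultaneously, with stretch directions that differ from step to step, so ``maximal infinitesimal stretch of $\phi_n$'' is not by itself what controls the count; you must argue (as in Manning's theorem, adapted to a sequence rather than an iterate) that the intersection cover only refines at the rate of the \emph{single} largest stretch $e^{d_{\T}(\X, \w_{n-1}\cdot\X)}$, using that expansion along one foliation is matched by contraction along the transverse one, and you must smooth the singular Teichm\"uller maps without inflating the constant. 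Second, the lower bound: Masur's criterion gives recurrence of the ray to the thick part, not that the particular orbit points $\w_n^{-1}\cdot\X$ lie in the thick part, so the comparison $\ell_{\w_n^{-1}\cdot\X}(\gamma) \asymp e^{d_{\T}(\X, \w_n^{-1}\cdot\X)}$ does not hold term by term; one needs to combine sublinear tracking with recurrence to get the asymptotic along a full-density set of times, which is enough for the $\limsup$. You correctly flag both of these as the delicate points, so the plan is sound, but the details are where Masai's paper does the real work.
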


One way to investigate the dynamical property of a mapping class is to observe its action on each simple closed curve on the ambient surface. Let us equip the surface $S$ with a Riemannian metric; $l(\gamma)$ denotes the length of the geodesic representative of a closed curve $\gamma$. Then for a pseudo-Anosov $\varphi \in \Mod(S)$ and a simple closed curve $\alpha$, $$h(\varphi) = \lim_{n \to \infty} {1 \over n} \log l(\varphi^{-n}\alpha).$$ For details, see \cite[Theorem 14.23]{FarbMargalit12}. Note that this does not depend on the choice of $\alpha$ and the Riemannian metric.

The following theorem of Karlsson, together with Theorem \ref{thm:masai}, asserts that the topological entropy of a random walk has a similar property.

%Furthermore, as the topological entropy of a single mapping class plays a role in representing dynamics of its iterated actions on a fixed isotopy class of simple closed curve, Karlsson's following theorem, together with the previous Masai's work, suggests the point of view that the topological entropy of random walks does so.

\begin{restatable}[Karlsson, \cite{karlsson2014two}]{thm}{Karlsson}
	There exists $\lambda$, called the \emph{Lyapunov exponent of the random walk}, such that for $\Prob-a.e.$ $\w = (\w_n) \in \Mod(S)^{\N}$, for any simple closed curve $\alpha$ and a Riemannian metric on $S$, $$\log \lambda = \lim_{n \to \infty} {1 \over n} \log l(\w_n^{-1}\alpha)$$% where $l(\cdot)$ denotes the infimum of the length of curves in $\alpha$ with respect to the given Riemannian metric on $S$. 
	holds. Moreover, for any $\X \in \T(S)$, $$\log \lambda = \lim_{n \to \infty} {1 \over n} d_{\T}(\X, \w_n \cdot \X).$$

	%where $l(\cdot)$ denotes the infimum of the length of curves in $\alpha$ with respect to the given Riemannian metric on $S$. Moreover, $$\log \lambda = \lim_{n \to \infty} {1 \over n} d_{\T}(\X, \w_n \cdot \X)$$ for any $\X \in \T(S)$.
\end{restatable}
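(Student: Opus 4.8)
The plan is to identify $\log\lambda$ with the \emph{drift} of the walk on $\T(S)$ and then sandwich this drift between the growth of $d_\T$-distances and the growth of curve lengths. First I would set up a subadditive cocycle: realize $\w$ through i.i.d.\ increments $g_i \sim \nu$, so $\w_n = g_1 g_2 \cdots g_n$, over the ergodic Bernoulli shift $\sigma$ on the increment space, and fix a basepoint $\X_0 \in \T(S)$, concretely a hyperbolic structure. Put $u_n(\w) := d_{\T}(\X_0, \w_n \cdot \X_0)$. Because $\Mod(S)$ acts on $(\T(S), d_{\T})$ by isometries, the triangle inequality gives $u_{n+m}(\w) \le u_n(\w) + u_m(\sigma^n \w)$, so $(u_n)$ is a subadditive cocycle; Kingman's subadditive ergodic theorem gives the a.s.\ limit $\frac{1}{n} u_n \to L$, and ergodicity of $\sigma$ makes $L$ a.s.\ constant. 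Set $\log\lambda := L$. Independence of the basepoint is immediate from $|d_{\T}(\X, \w_n\cdot\X) - d_{\T}(\X', \w_n\cdot\X')| \le 2 d_{\T}(\X, \X')$, again by the isometric action, so after dividing by $n$ the difference vanishes; this already yields the second displayed equality. (When $\nu$ has finite first moment, the regime where Theorem~\ref{thm:masai} applies, $L$ is finite and equals $\inf_n \frac{1}{n}\E u_n$.)

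Next I would reduce the curve statement to controlling $\frac{1}{n} \log l_{\X_0}(\w_n^{-1}\alpha) = \frac{1}{n} \log l_{\w_n\cdot\X_0}(\alpha)$ for a fixed simple closed curve $\alpha$: since $S$ is compact, lengths in the given Riemannian metric and in $\X_0$ differ by a bounded factor uniformly over curves, so the growth rates agree, and since there are only countably many curve classes the exceptional null set may be taken independent of $\alpha$. For the upper bound I would invoke Kerckhoff's formula $d_{\T}(\X, \mathcal{Y}) = \frac{1}{2}\log\sup_\gamma \mathrm{Ext}_{\mathcal{Y}}(\gamma)/\mathrm{Ext}_{\X}(\gamma)$ with $\gamma = \alpha$, giving $\mathrm{Ext}_{\w_n\cdot\X_0}(\alpha) \le e^{2 u_n(\w)}\,\mathrm{Ext}_{\X_0}(\alpha)$, together with the elementary estimate $l_{\X_0}(\gamma)^2 \le \mathrm{Area}(\X_0)\cdot\mathrm{Ext}_{\X_0}(\gamma)$ (use the fixed metric as a competitor in the definition of extremal length). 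These combine to
\[
\tfrac{1}{n} \log l_{\X_0}(\w_n^{-1}\alpha) \le \tfrac{1}{n} u_n(\w) + o(1) \longrightarrow \log\lambda .
\]

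The heart of the proof is the matching lower bound $\liminf_n \frac{1}{n} \log l_{\X_0}(\w_n^{-1}\alpha) \ge \log\lambda$. If $\log\lambda = 0$ this is trivial, since $l_{\X_0}(\w_n^{-1}\alpha)$ is bounded below by the length of the shortest closed geodesic of $\X_0$, so assume $\log\lambda > 0$. Here one uses that $\w_n\cdot\X_0$ leaves every compact set and, crucially because $\nu$ is non-elementary, converges $\Prob$-a.s.\ in the Gardiner--Masur (equivalently, here, Thurston) boundary of $\T(S)$ to a uniquely ergodic, \emph{filling} projective measured foliation $[\mathcal{F}] \in \PMF(S)$, so that $i(\alpha, \mathcal{F}) > 0$ for the fixed curve $\alpha$. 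Karlsson's noncommutative (horofunction) subadditive ergodic theorem then upgrades the drift to an asymptotic statement about the horofunction $\psi_{[\mathcal{F}]}$ attached to the limit point, $-\frac{1}{n}\psi_{[\mathcal{F}]}(\w_n\cdot\X_0) \to L$. Finally, since $\mathcal{F}$ is filling with $i(\alpha,\mathcal{F}) > 0$, the Gardiner--Masur description of the boundary together with unique ergodicity shows that $\psi_{[\mathcal{F}]}(\cdot)$ and $\frac{1}{2}\log\mathrm{Ext}_{\cdot}(\alpha)$, hence $\log l_{\cdot}(\alpha)$, differ by $o\big(d_{\T}(\X_0,\cdot)\big)$ as one approaches $[\mathcal{F}]$; substituting $\w_n\cdot\X_0$ converts the horofunction asymptotics into $\frac{1}{n}\log l_{\X_0}(\w_n^{-1}\alpha) \to \log\lambda$.

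I expect the main obstacle to be exactly this last step: the almost sure convergence of $\w_n\cdot\X_0$ to a filling uniquely ergodic foliation and the comparison of the limiting horofunction with a \emph{single} length function $l_{\cdot}(\alpha)$. This is where non-elementariness of $\nu$ is indispensable — for an elementary reducible measure the limiting foliation may be disjoint from $\alpha$ and the conclusion fails outright — and it is precisely the analytic content being imported from Karlsson's work, so in the writeup this would be the point to either cite \cite{karlsson2014two} directly or reproduce its Teichm\"uller-space argument in the present setting.
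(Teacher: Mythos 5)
The paper does not prove this statement: it is imported verbatim from Karlsson's paper \cite{karlsson2014two} and used as a black box alongside Masai's Theorem~\ref{thm:masai}, so there is no argument in the text against which your proposal can be compared. The hypotheses you correctly smuggle in (non-elementary $\nu$, finite first moment with respect to $d_{\T}$) are not visible in the statement as displayed in the paper, but they are in force because Karlsson's theorem is invoked together with Theorem~\ref{thm:masai}; they should be stated explicitly, since, as you note, the curve-length statement fails outright for an elementary measure.

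Taken on its own, your sketch is a reasonable reconstruction of the strategy. The definition of $\log\lambda$ via Kingman and ergodicity, basepoint-independence, and the upper bound $\frac{1}{n}\log l_{\X_0}(\w_n^{-1}\alpha) \le \frac{1}{n}d_{\T}(\X_0,\w_n\cdot\X_0) + o(1)$ via Kerckhoff's formula and $l^2 \le \mathrm{Area}\cdot\mathrm{Ext}$ are correct and standard. One step in the lower bound is more delicate than your wording suggests: Karlsson's subadditive (multiplicative) ergodic theorem hands you \emph{some} horofunction $\psi$ with $-\frac{1}{n}\psi(\w_n\cdot\X_0)\to L$; it does not by itself identify $\psi$ with the horofunction attached to the Gardiner--Masur boundary point of the uniquely ergodic filling foliation $[\mathcal{F}]$ that the walk converges to. Identifying the ergodic-theoretic horofunction with the geometric one, and then showing that for a filling uniquely ergodic $[\mathcal{F}]$ with $i(\alpha,\mathcal{F})>0$ the quantity $\frac{1}{2}\log\mathrm{Ext}_{\cdot}(\alpha)$ tracks $-\psi_{[\mathcal{F}]}(\cdot)$ up to $o(d_{\T}(\X_0,\cdot))$, is the bulk of the Teichm\"uller-geometric work in \cite{karlsson2014two}; you rightly flag this as the place to either cite or reproduce, but a complete write-up would need to carry out that identification rather than leave it implicit.
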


\subsection{Spectral theorem for random walks on $\Mod(S)$}

So far, we dealt with the topological entropy of a single mapping class and that of a random walk. The former contains both topological information and dynamical information; for pseudo-Anosov $\varphi \in \Mod(S)$, $h(\varphi)$ is the log of its stretch factor and $h(\varphi) = \lim_{n \to \infty} {1 \over n} d_{\T}(\X, \varphi^n \cdot \X)$ for any $\X \in \T(S)$. The latter also describes the dynamics of a random walk in a similar way; $h(\w) = \lim_{n \to \infty} {1 \over n} d_{\T}(\X, \w_n \cdot \X)$ for any $\X \in \T(S)$ under certain condition. With this similarity, it is natural to ask how both topological entropies are related. More precisely, for a random walk $\w = (\w_n) \in \Mod(S)^{\N}$, how do $h(\w_n)$ and $h(\w)$ interact? Dahmani and Horbez answered this in the following spectral theorem when the underlying measure is finitely supported.

%Previously, we have seen that the topological entropy $h(\varphi)$ of pseudo-Anosov $\varphi$ equals to a logarithm of its stretch factor, and the topological entropy $h(\w)$ of random walk $\w$ equals to the the limit of ${1 \over n} d_{\T}(\X, \w_n \cdot \X)$, often called `drift' and denoted by $L_{\T}$ in general. Then it is natural to ask how both topological entropies are related, that is, for random walk $\w = (\w_n) \in \Mod(S)^{\N}$, how $h(\w_n)$ and $h(\w)$ interact. Dahmani and Horbez figured out their relation, so called spectral theorem, when underlying probability measure has finite support as follows.

\DH

Together with Theorem \ref{thm:masai} and Theorem \ref{thm:thurstonentropy}, the above theorem concludes that ${1 \over n} h(\w_n) \to h(\w)$ as $n \to \infty$. In other words, the linear growth rate of the topological entropies of random mapping classes equals the topological entropy of the random walk.

Not only for finitely supported measure, Dahmani and Horbez also asserted in \cite[Remark 2.7, 3.2]{dahmani2018spectral} that the same result holds under the finite second moment condition. For the sake of completeness, we provide a detailed proof of this assertion in the setting of Thurston's construction. 

\secondmoment* \label{thm:secondmoment}

Before proving this theorem, we briefly review the Busemann boundary of hyperbolic spaces: see \cite{benoist2016central} for details. For a proper quasiconvex Gromov hyperbolic space $(M, d)$ and a reference point $\X \in M$, the {\em Busemann compactification} $M_B$ consists of functions $h_x : M \to \R$ of the form $$h_x(m) := \lim_{n \to \infty} d(m, x_n) - d(\X, x_n)$$ that are well-defined on $M$, where $x = (x_n)$ is a sequence in $M$. From the definition, we observe that $h_x(m) = d(m, x) - d(\X, x)$ for a convergent sequence $x_n \to x \in M$ and $h_x = h_y$ implies $x = y$ for $x, y \in M$. As such, it is possible to regard $M$ as a subspace of $M_B$; indeed, $M_B$ is a compactification of $M$ under a suitable topology. Then we define the {\em Busemann boundary} of $M$ as $\partial_BM := M_B \setminus M$.

We now compare the Busemann compactification $M_{B}$ of $M$ with another compactification $M_{G}$, called the {\em Gromov compactification}. Since a bounded sequence in $M$ is convergent if it defines a point in the Busemann compactification, there exists a natural map $\pi : M_B \to M_G$ which is the identity on $M$, and this is indeed surjective.

We can extend the Gromov product on $M$ to $M_B$, by defining $$(m | x)_{\X} = (x | m)_{\X} := {1 \over 2}\left(d(m, \X) - h_x(m) \right) \mbox{ and}$$ $$(x | x')_{\X} := -\inf_{m \in M} {1 \over 2} \left(h_x(m) + h_{x'}(m) \right)$$ for $m \in M$ and $x, x' \in \partial_BM$. For $x = (x_{n})$, the first formula reads $$(m | x)_{\X} = \lim_{n \to \infty} (m | x_n)_{\X}.$$

	\begin{lem}[Lemma 5.10 of \cite{maher2018random}]\label{lem:gromovApprox}
	Let $M$ be a Gromov hyperbolic space and $\X \in M$. Then there exist $K, K' > 0$ satisfying the following. For $a, b, c, d \in M$, if $(a | b)_{\X}, (c | d)_{\X} \ge A$ and $(a | c)_{\X} \le A - K$ holds for some $A$, then $|(a | c)_{\X} - (b | d)_{\X}| \le K'$.
	\end{lem}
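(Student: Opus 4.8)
\smallskip
\noindent\emph{Proof strategy.} Since all four points $a,b,c,d$ lie in the genuine Gromov hyperbolic space $M$, the Busemann boundary plays no role and the plan is simply to run the standard Gromov-product estimate for $\delta$-hyperbolic spaces. Fix a hyperbolicity constant $\delta > 0$ for which $(x|z)_{\X} \ge \min\{(x|y)_{\X},(y|z)_{\X}\} - \delta$ holds for all $x,y,z \in M$ (any such space is also $\delta'$-hyperbolic for $\delta' \ge \delta$, so positivity is no loss), and set $K := 3\delta$ and $K' := 2\delta$; any $K > 2\delta$ works.

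First I would compare $(a|c)_{\X}$ with $(b|c)_{\X}$ by pivoting on $b$. Applying the $\delta$-inequality to $a,b,c$ gives $(a|c)_{\X} \ge \min\{(a|b)_{\X},(b|c)_{\X}\} - \delta$; the minimum cannot be $(a|b)_{\X}$, for that would force $(a|c)_{\X} \ge A - \delta$, contradicting $(a|c)_{\X} \le A - K$ since $K > \delta$. Hence $(a|c)_{\X} \ge (b|c)_{\X} - \delta$. Conversely, applying the inequality to $b,a,c$ and using $(b|a)_{\X} \ge A > A - K \ge (a|c)_{\X}$ gives $(b|c)_{\X} \ge (a|c)_{\X} - \delta$, so $|(a|c)_{\X} - (b|c)_{\X}| \le \delta$.

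Next I would compare $(b|c)_{\X}$ with $(b|d)_{\X}$ in the same fashion, pivoting on $d$. The previous step gives $(b|c)_{\X} \le (a|c)_{\X} + \delta \le A - (K - \delta)$, and since $K - \delta > \delta$ while $(c|d)_{\X} \ge A$, the very same argument with the ``close pair'' $\{c,d\}$ in place of $\{a,b\}$ and with $b$ in place of $c$ yields $|(b|c)_{\X} - (b|d)_{\X}| \le \delta$. Combining the two bounds by the triangle inequality gives $|(a|c)_{\X} - (b|d)_{\X}| \le 2\delta = K'$, as desired.

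I do not anticipate a genuine obstacle: the argument is entirely bookkeeping with the hyperbolicity constant, the only point of care being to choose $K$ strictly larger than twice the constant appearing in the $\delta$-inequality so that the ``bad case'' is excluded at both stages (and to remember that this constant depends on the chosen convention for $\delta$-hyperbolicity --- it may be $\delta$, $2\delta$, or $4\delta$). For intuition: $(a|b)_{\X},(c|d)_{\X} \ge A$ says the geodesics from $\X$ to $a$ and $b$ (resp.\ to $c$ and $d$) fellow-travel down to depth about $A$, whereas $(a|c)_{\X} \le A - K$ says $[\X,a]$ and $[\X,c]$ have already parted well before depth $A$; hence the parting point of $[\X,a]$ and $[\X,c]$ coincides, up to bounded error, with that of $[\X,b]$ and $[\X,d]$, which is precisely the asserted estimate.
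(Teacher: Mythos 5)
The paper does not prove this lemma; it cites it directly as Lemma~5.10 of Maher--Tiozzo~\cite{maher2018random}, so there is no in-paper proof to compare your argument against. Your proof is correct and is the standard argument: you telescope $(a\mid c)_{\X} \to (b\mid c)_{\X} \to (b\mid d)_{\X}$ using the Gromov-product form of $\delta$-hyperbolicity, excluding the wrong branch of each $\min$ by contradiction with the gap hypothesis; the requirement $K > 2\delta$ correctly accounts for the $\delta$-degradation of that gap between the two stages, and the choices $K = 3\delta$, $K' = 2\delta$ are valid (modulo the normalization convention for $\delta$-hyperbolicity, which you already flag).
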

	
We are now ready to prove Theorem \hyperref[thm:secondmoment]{A}:

\begin{proof}[Proof of the theorem]

	Thurston's construction in \cite{thurston1988geometry} describes how a Teichm{\"u}ller disk $f : \mathbb{H}^{2} \rightarrow \T(S)$ is determined from the pair $(A, B)$ (cf. \cite{leininger2004groups}, \cite{herrlich2007boundary}). Elements in $\langle T_A, T_B \rangle$ stabilize $f(\mathbb{H}^{2})$ and act on $f(\mathbb{H}^{2})$ as isometries. Thus, the isometric embedding $f$ induces a representation $\rho : \langle T_A, T_B \rangle \rightarrow \PR = \operatorname{Isom}^{+}(\mathbb{H}^{2})$. Throughout, $O(\delta)$ denotes a constant solely depending on the hyperbolicity $\delta$ of $\mathbb{H}^2$. $d( \cdot, \cdot)$ denotes the hyperbolic distance of $\mathbb{H}^{2}$, which induces a Gromov product $(\cdot | \cdot)_{\cdot}$.
	
	We now estimate the translation length of a random mapping class $\w_{n}$ viewed as an isometry of $\mathbb{H}^{2}$. From the work \cite{maher2018random} of Maher and Tiozzo, the translation length can be estimated as follows:
	
	\begin{lem}[Maher--Tiozzo] \label{lem:mttranslation}
		There exists a constant $C_0 > 0$ such that for any isometry $g$ of $\mathbb{H}^2$, if $d(\X, g \cdot \X) \ge 2(g \cdot \X | g^{-1} \cdot \X)_{\X} + C_0$, then $$\left| (\mbox{translation length of } g)- \left( d(\X, g \cdot \X) - 2(g \cdot \X | g^{-1} \cdot \X)_{\X} \right) \right| \le O(\delta).$$
	\end{lem}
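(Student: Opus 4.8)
The plan is to reduce the statement to a self-contained estimate for the orbit of a single isometry of $\mathbb{H}^2$, which is precisely the translation-length estimate of Maher and Tiozzo. Abbreviate $a := d(\X, g\cdot\X)$ and $\rho := (g\cdot\X \mid g^{-1}\cdot\X)_{\X}$, so the hypothesis reads $a \ge 2\rho + C_0$, and the quantity to estimate is the translation length $\tau(g) = \lim_{n\to\infty} \tfrac{1}{n} d(\X, g^n\cdot\X)$. The first step is the equivariance observation: putting $x_n := g^n\cdot\X$ and applying the isometry $g^{-n}$ gives $d(x_n, x_{n+1}) = a$ for every $n \in \Z$ and $(x_{n-1}\mid x_{n+1})_{x_n} = (g^{-1}\cdot\X \mid g\cdot\X)_{\X} = \rho$ for every $n$. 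Thus the bi-infinite broken geodesic $\Gamma := \bigcup_{n\in\Z}[x_n, x_{n+1}]$ consists of segments of a single length $a$ whose consecutive overlaps all equal $\rho$, and the hypothesis says exactly that these segments are long relative to the overlaps.

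The second step invokes the standard local-to-global principle for Gromov hyperbolic spaces: there is a threshold $C_0 = C_0(\delta)$ so that, once $a \ge 2\rho + C_0$, the path $\Gamma$ is an unparametrised quasigeodesic line, hence lies in a bounded neighbourhood of a genuine geodesic $\ell$, with the midpoints of its segments within $O(\delta)$ of $\ell$. In particular $g$ is loxodromic with axis $\ell$; moreover $\rho$ agrees up to $O(\delta)$ with $d(\X, [g^{-1}\cdot\X, g\cdot\X])$ and this geodesic runs $O(\delta)$-close to $\ell$ near $x_0$, so the nearest-point projection $p := \pi_{\ell}(\X)$ satisfies $d(\X, p) = \rho + O(\delta)$; equivariance then gives $d(g^n\cdot\X, g^n\cdot p) = \rho + O(\delta)$ and $d(p, g^n\cdot p) = n\,\tau(g)$ for all $n$.

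The third step estimates $a$ against the axis. Since $\tau(g) = d(p, g\cdot p) \ge a - 2\rho - O(\delta) \ge C_0 - O(\delta)$, the points $p$ and $g\cdot p$ are far apart compared with $\rho$, so the geodesic $[\X, g\cdot\X]$ must leave $\X$ toward $p$, fellow-travel $\ell$ from near $p$ to near $g\cdot p$, and leave toward $g\cdot\X$; using that nearest-point projection to a geodesic coarsely preserves the relevant Gromov products, this yields \[ d(\X, g\cdot\X) = d(\X, p) + d(p, g\cdot p) + d(g\cdot p, g\cdot\X) + O(\delta) = \tau(g) + 2\rho + O(\delta). \] Rearranging gives $\bigl|\tau(g) - (a - 2\rho)\bigr| \le O(\delta)$, which is the assertion. (Equivalently one can run the same comparison with $g^n$ in place of $g$, obtaining $d(\X, g^n\cdot\X) = n\,\tau(g) + 2\rho + O(\delta)$ uniformly in $n$, and divide by $n$.)

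The step I expect to be the real obstacle is the second one: one must check that the constants in the local-to-global lemma, the Hausdorff distance between $\Gamma$ and $\ell$ at the level of segment midpoints, and the identification $d(\X, \ell) = \rho + O(\delta)$ can all be taken to depend on $\delta$ alone and do not accumulate with the number of concatenated segments — which is delicate precisely because the vertices $x_n$ themselves sit at distance $\approx \rho$ from $\ell$. This bookkeeping is exactly what is carried out in \cite{maher2018random}, so in practice we will quote that estimate (or reprove it via the explicit ``sufficiently long segments, sufficiently sharp turns'' argument in a $\delta$-hyperbolic space) rather than reconstruct it here.
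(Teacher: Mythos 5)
The paper does not prove this lemma at all; it simply quotes it from Maher--Tiozzo \cite{maher2018random} (where it appears as Proposition 5.8), and the surrounding text makes clear that it is being used as a black box. Your sketch is a correct outline of how that cited result is actually established, and it follows essentially the same route as Maher and Tiozzo: take the $g$-orbit of $\X$, observe by equivariance that the concatenated broken geodesic has constant segment length $a$ and constant Gromov product $\rho$ at every vertex, invoke the local-to-global principle (once $a \ge 2\rho + C_0(\delta)$) to conclude that the broken path is a quasigeodesic fellow-travelling a genuine axis $\ell$, identify $d(\X, \ell)$ with $\rho$ up to $O(\delta)$, and then read off $\tau(g) = a - 2\rho + O(\delta)$. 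Your closing caveat correctly flags the one point that requires care — that the fellow-travelling and projection constants depend only on $\delta$ and do not degrade with the number of segments — and this is exactly what the local-to-global lemma (and Maher--Tiozzo's bookkeeping) supplies. So the proposal is sound; the only difference from the paper is that the paper outsources the argument entirely to the citation, while you reconstruct it.
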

	
	%%Since the translation length of a single isometry $\w_n$ is $\lim_{m \to \infty} {1 \over m} d(\X, \w_n^m \cdot \X)$, it remains to estimate $d(\X, \w_n \cdot \X)$ and $(\w_n \cdot \X | \w_n^{-1} \cdot \X)$. From Theorem \ref{thm:masai}, we have $\lim_{n \to \infty} {1 \over n} d_{\T}(\X, \w_n \cdot \X) = h(\w)$ for $\Prob-a.e.$ $\w \in \Mod(S)^{\N}$ and $h(\w)$ is positive by \cite[Proposition 3.3]{benoist2016central}. It means that $d(\X, \w_n \cdot \X)$ is controlled by $h(\w)$ for large $n$.
	In view of Lemma \ref{lem:mttranslation}, we should estimate $d(\X, \w_{n}\cdot \X)$ and control the term $(\w_{n} \cdot \X | \w_{n}^{-1} \cdot \X)_{\X}$ in order to obtain the translation length of $\w_{n}$. We do this by referring to \cite[Proposition 4.1]{benoist2016central}.
	
%	Since we already know that $\lim_{n \to \infty} {1 \over n} d_{\T}(\X, \w_n \cdot \X) = h(\w)$ for $\Prob-a.e.$ $\w \in \Mod(S)^{\N}$ from Theorem \ref{thm:masai}, the remaining is $(\w_n \cdot \X | \w_n^{-1} \cdot \X)_{\X}$. Noting that $(\mathbb{H}^2, d_{\mathbb{H}^2})$ is proper and quasiconvex, let us refer to \cite[Proposition 4.1]{benoist2016central}.
	
	\begin{lem}[Benoist--Quint] \label{lem:bqsummable}
		Let $p > 1$, $(M, d)$ be a proper quasiconvex Gromov hyperbolic space, and $\X \in M$. Let $\nu$ be a non-elementary Borel probability measure on $G \le \mathrm{Isom^{+}}(M)$ such that $\int_G d(\X, g \cdot \X)^p d\nu(g) < \infty$.
		
		Then $\lambda := \lim_{n\to \infty} {1 \over n} \E d(\X, \w_n \cdot \X) > 0$. Moreover, for every $\varepsilon > 0$, there exist constants $C_n$ such that $\sum_{n \ge 1} n^{p-2}C_n < \infty$ and for any $x \in \partial_B M$, $$\Prob \left(|h_x(\w_n^{-1}\cdot \X) - n \lambda | > \varepsilon n\right) \le C_n, \mbox{ and}$$ $$\Prob \left(|d(\X , \w_n \cdot \X) - n \lambda| > \varepsilon n \right) \le C_n.$$
%		Let $p > 1$, $(M, d)$ be a proper quasiconvex Gromov hyperbolic space, and $\X \in M$. Let $\nu$ be a non-elementary Borel probability measure on $G \le \mathrm{Isom^{+}}(M)$ such that $\int_G d(\X, g \cdot \X)^p d\nu(g) < \infty$ and $\lambda = \lim_{n\to \infty} {1 \over n} \E d(\X, \w_n \cdot \X) > 0$. Then for every $\varepsilon > 0$, there exist constants $C_n$ such that $\sum_{n \ge 1} n^{p-2}C_n < \infty$ and for any $x \in \partial_B M$, $$\Prob \left(|h_x(\w_n^{-1}\cdot \X) - n \lambda | > \varepsilon n\right) \le C_n, \mbox{ and}$$ $$\Prob \left(|d(\X , \w_n \cdot \X) - n \lambda| > \varepsilon n \right) \le C_n.$$

	\end{lem}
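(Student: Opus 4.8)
\emph{Sketch of proof, following Benoist--Quint \cite{benoist2016central}.} The statement splits into positivity of the drift $\lambda$ and two polynomial-rate deviation inequalities; the plan is to dispatch the drift quickly and then build the deviation inequalities from a reduction to Birkhoff sums of a single $p$-integrable cocycle. Throughout, $g_1,g_2,\ldots$ are the i.i.d.\ increments with law $\nu$ and $\w_n=g_1\cdots g_n$. The limit $\lambda=\lim_n\frac1n\E\,d(\X,\w_n\cdot\X)$ exists by subadditivity of $n\mapsto\E\,d(\X,\w_n\cdot\X)$ (finite $p$-th moment with $p>1$ forces finite first moment, so Fekete applies), and $\lambda>0$ because $\nu$ is non-elementary: $\langle\supp\nu\rangle$ contains two loxodromic isometries with disjoint fixed-point pairs, and a ping-pong argument (equivalently, non-atomicity of the harmonic measure on $\partial M$ together with boundary convergence $\w_n\cdot\X\to\xi^{+}$ as in \cite{maher2018random}) forces $d(\X,\w_n\cdot\X)$ to grow linearly. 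I would simply quote this step.

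For the deviation inequalities, the geometric heart is to identify $h_x(\w_n^{-1}\cdot\X)$, for a fixed boundary point $x\in\partial_B M$, with an additive sum: iterating the equivariance relation $h_\eta(g^{-1}\cdot y)=h_{g\eta}(y)+h_\eta(g^{-1}\cdot\X)$ for Busemann functions gives the exact identity $h_x(\w_n^{-1}\cdot\X)=\sum_{k=1}^{n}\kappa\big(g_k,\, g_{k+1}\cdots g_n\cdot x\big)$, where $\kappa(g,\eta):=h_\eta(g^{-1}\cdot\X)$ satisfies $|\kappa(g,\eta)|\le d(\X,g\cdot\X)$. The $\delta$-hyperbolic comparisons of Lemma \ref{lem:gromovApprox} then also reduce the second inequality to the first, via the relation $d(\X,\w_n\cdot\X)=-h_{x}(\w_n\cdot\X)+O(\delta)$ valid whenever $x$ lies in a shadow of $\w_n\cdot\X$ seen from $\X$, which occurs off an exceptional event whose probability is itself controlled by the $C_n$'s. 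The obstruction is that the summand $\kappa(g_k,\,g_{k+1}\cdots g_n\cdot x)$ is not independent of $g_k$; the Benoist--Quint device is to replace $x$ by the limiting boundary point of the reversed walk and condition on it, under which the arguments $g_{k+1}\cdots g_n\cdot x$ form a stationary process and the summands become i.i.d.\ (up to a Radon--Nikodym twist) with mean $\lambda$ and finite $p$-th moment inherited from $\int d(\X,g\cdot\X)^p\,d\nu(g)<\infty$.

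On each fiber one is then reduced to an off-the-shelf fact: for i.i.d.\ centered $Y_i$ with $\E|Y_1|^p<\infty$, $1<p\le 2$, one has $\sum_{n\ge 1}n^{p-2}\,\Prob\big(|Y_1+\cdots+Y_n|>\varepsilon n\big)<\infty$ for every $\varepsilon>0$ --- this is precisely the Baum--Katz strong law with rate (the Hsu--Robbins law when $p=2$), and it is exactly the strength needed, since plain Chebyshev or a von Bahr--Esseen bound gives only $C_n\sim n^{-(p-1)}$ and hence $n^{p-2}C_n\sim n^{-1}$, which is not summable. Integrating the fiberwise estimate over the boundary and absorbing the $O(\delta)$ errors into $\varepsilon$ yields the constants $C_n$ with $\sum_n n^{p-2}C_n<\infty$. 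I expect the conditioning-on-the-boundary step --- identifying the conditioned increment process and checking that it keeps finite $p$-th moment uniformly in the boundary parameter --- to be the main obstacle; once that reduction is in place, the probabilistic input is classical.
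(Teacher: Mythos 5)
The paper offers no proof of this lemma at all: it is imported verbatim as \cite[Proposition 4.1]{benoist2016central}, so the only fair comparison is between your sketch and the Benoist--Quint argument it points to. Your architecture matches theirs in outline --- positivity of the drift from non-elementarity, reduction of both estimates to the Busemann cocycle $\kappa(g,\eta)=h_\eta(g^{-1}\cdot\X)$ via the cocycle identity, and a Baum--Katz-type complete-convergence rate as the probabilistic engine (and you are right that Chebyshev/von Bahr--Esseen only gives $C_n\sim n^{-(p-1)}$, which is not good enough). Incidentally, Baum--Katz holds for all $p>1$, not just $p\le 2$, so the restriction in your statement of the classical input is unnecessary; in the paper's application only $p=2$ is used.

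There are, however, two genuine gaps in the middle step. First, conditioning on the limit point of the reversed walk does \emph{not} make the summands $\kappa(g_k,\,g_{k+1}\cdots g_n\cdot x)$ i.i.d., even up to a density twist: the argument $g_{k+1}\cdots g_n\cdot x$ depends on all later increments, and no conditioning removes that dependence. Benoist--Quint's actual device is to show the Busemann cocycle is \emph{centerable}: it equals a cocycle with constant drift plus a coboundary $\psi-\psi\circ T$ with a uniformly $L^p$-bounded defect, and the constant-drift part is then a sum of martingale differences over the skew product on $G^{\N}\times \partial_B M$, to which a martingale version of Baum--Katz applies. Without that decomposition your reduction to i.i.d. sums does not go through. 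Second, your final step of ``integrating the fiberwise estimate over the boundary'' would only produce a bound for stationary-measure-a.e.\ $x$ or an averaged bound, whereas the lemma --- and crucially its use in the proof of Theorem A, where $x$ ranges over half-geodesic endpoints determined by arbitrary group elements $g$ --- requires a single sequence $C_n$ valid for \emph{every} $x\in\partial_B M$. That uniformity is exactly what the uniform centerability of the cocycle delivers, and it is the part of the Benoist--Quint proof your sketch does not reach.
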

	
	Since $\langle T_A, T_B \rangle$ is discrete (\cite{thurston1988geometry}, \cite{leininger2004groups}), $\nu$ and $\hat{\nu} := \nu(\cdot^{-1})$ are Borel measures and we can apply Lemma \ref{lem:bqsummable}. Note that $\lambda$ is the drift of $\w$.
	
	For each $\varepsilon > 0$, we now obtain a summable sequence $C_{n; \varepsilon}$ such that 
	$$\Prob\left(|h_x(\w_n^{-1}\cdot \X) - n \lambda | > \varepsilon n\right) \le C_{n; \varepsilon}, \mbox{ and}$$ 
	$$\Prob \left(|d(\X , \w_n^{-1} \cdot \X) - n \lambda| > \varepsilon n\right) \le C_{n; \varepsilon}.$$ 
	Since $(\w_n^{-1} \cdot \X | x)_{\X} = {1 \over 2} \left( d(\X, \w_n^{-1} \cdot \X) - h_x(\w_n^{-1}\cdot \X) \right)$, we have 
	$$\begin{aligned} \Prob \left( (\w_n^{-1} \cdot \X | x)_{\X} > \varepsilon n \right) & \le  \Prob\left(|d(\X , \w_n^{-1} \cdot \X) - n \lambda| > \varepsilon n\right) \\
	& \quad + \Prob\left(|h_x(\w_n^{-1}\cdot \X) - n \lambda | > \varepsilon n\right) \\ & \le  2 C_{n; \varepsilon}. \end{aligned}$$ 
	A similar argument shows that $ \Prob \left( (\w_n \cdot \X | x)_{\X} > \varepsilon n \right) < 2 C_{n; \varepsilon}.$ 	
	%% New approach begins
	
	Now let $0< \varepsilon < \lambda/10$. We set $m = m(n) =  \lceil n/2 \rceil$ and $u_{m(n)} := \omega_{m}^{-1} \omega_{n} = g_{m+1} \cdots g_{n}$. Note that $\omega_{m(n)}$ and $u_{m(n)}$ are independent for each $n \in \N$. Note also that $u_{m(n)}$ and $\omega_{n-m(n)}$ have the same distribution.
	
	Using the independence, we deduce\[
		\Prob \left((u_{m}^{-1} \cdot \X | \omega_{m} \cdot \X)_{\X} > \varepsilon n \right)  = \sum_{g \in \Mod(S)} \Prob(u_{m}^{-1} = g) \Prob \left((g\cdot \X | \w_{m} \cdot \X)_{\X} > \varepsilon n \right).
	\]
	If $g = id$, then $(g \cdot \X | \w_{m} \cdot \X)_{\X} = 0$. For $g \neq id$, we consider the half-geodesic $\Gamma = \overrightarrow{\X x}$ from $\X$ to some $x \in \partial_{B} \mathbb{H}^{2}$ such that $g \X \in \Gamma$. Let $\{\X_{i}\}$ be a sequence on $\Gamma$ tending to $x$, where $\X_{1} = g \X$. For each $i$, we obtain \[\begin{aligned}
	(\X_{i} | \w_{m} \cdot \X)_{\X} &= \frac{1}{2} \left[d(\X, \X_{i}) + d(\X, \w_{m} \cdot \X)  - d(\X_{i}, \w_{m} \cdot \X) \right]
	\\ &\ge  \frac{1}{2} \left[\left(d(\X, \X_{1}) + d(\X_{1}, \X_{i})\right) + d(\X, \w_{m} \cdot \X)  - \left(d(\X_{1}, \w_{m} \cdot \X) +  d(\X_{1}, \X_{i})\right)\right]
	\\ &= \frac{1}{2} \left[d(\X, \X_{1}) + d(\X, \w_{m} \cdot \X) - d(\X_{1}, \w_{m} \cdot \X) )\right] \\ & = (g \cdot\X | \w_{m} \cdot \X)_{\X}.
	\end{aligned}
	\]
	This implies that \[
	\Prob((g \cdot \X | \w_{m} \cdot \X)_{\X} > \varepsilon n) \le \Prob ((x | \w_{m} \cdot \X)_{\X} > \varepsilon n) \le 2 C_{m;\varepsilon}.
	\]
	From this and the fact that $\sum_{g \in \Mod(S)} \Prob(u_{m}^{-1} = g) = 1$, we conclude that \begin{equation}\label{eqn:busemann1}
	\Prob \left((u_{m}^{-1} \cdot \X | \omega_{m} \cdot \X)_{\X} > \varepsilon n \right) \le 1 \cdot 2C_{m;\varepsilon}.
	\end{equation}
	
	Next, we calculate \[\begin{aligned}
	& \Prob \left((\w_{m} \cdot \X | \omega_{n} \cdot \X)_{\X} <  (\lambda - \varepsilon) m \right)  \\
	& \quad = \sum_{g \in \Mod(S)} \Prob(\w_{m}= g) \Prob \left((g\cdot \X | g u_{m}  \cdot \X)_{\X} < (\lambda - \varepsilon) m \right) \\
	& \quad \le \sum_{\substack{d(\X, g \cdot \X) \\  \le (\lambda - \varepsilon/2) m}}\Prob(\w_{m}= g) +  \sum_{\substack{d(\X, g \cdot \X)\\ > (\lambda - \varepsilon/2) m}}\Prob(\w_{m}= g) \Prob \left((g\cdot \X | g u_{m}  \cdot \X)_{\X} < (\lambda - \varepsilon) m \right) \\
	& \quad = \Prob\left( d(\X, \omega_{m} \cdot \X) \le \left(\lambda - \frac{\varepsilon}{2}\right) m\right) \\
	& \quad \quad + \sum_{\substack{d(\X, g \cdot \X)\\ > (\lambda - \varepsilon/2) m}}\Prob(\w_{m}= g) \Prob \left(( \X | \w_{n-m}  \cdot \X)_{g^{-1} \X} < (\lambda - \varepsilon) m \right).
	\end{aligned}
	\]
	The first term is bounded by $C_{m;\varepsilon/2}$. To deal with the second term, we fix $g$ with $d(\X, g \cdot \X) > (\lambda - \varepsilon/2)m$. Let $\Gamma = \overrightarrow{\X x}$ be the half-geodesic from $\X$ to some $x \in \partial_{B} \mathbb{H}^{2}$  such that $g^{-1} \X \in \Gamma$. Let $\{\X_{i}\}$ be a sequence on $\Gamma$ tending to $x$, where $\X_{1} = g^{-1} \X$. If $(\X | \w_{n-m} \cdot \X)_{g^{-1} \cdot \X} <(\lambda - \varepsilon / 2) m$, then
	\[\begin{aligned}
	(\X_{i} | \omega_{n-m} \cdot \X)_{\X} &\ge (g^{-1}\cdot  \X | \omega_{n-m} \cdot \X)_{\X} \\
	& = d(g^{-1} \cdot \X, \X) - (\X | \omega_{n-m} \cdot \X)_{g^{-1} \cdot \X}\\ & \ge \left(\lambda - \frac{\varepsilon}{2}\right)m - (\lambda - \varepsilon) m \\
	& = \frac{\varepsilon}{2} m.
	\end{aligned}
	\]
	Thus, we deduce \[
	\Prob \left(( \X | \w_{n-m}  \cdot \X)_{g^{-1} \X} < (\lambda - \varepsilon) m \right) \le \Prob \left( (x | \w_{n-m} \cdot \X)_{\X} \ge \varepsilon m/2 \right) \le 2C_{m;\varepsilon/2}.
	\]
	From this we conclude that \begin{equation}\label{eqn:busemann2}
	\Prob \left((\w_{m} \cdot \X | \omega_{n} \cdot \X)_{\X} <  (\lambda - \varepsilon) m \right)  \le C_{m;\varepsilon/2} + 1 \cdot 2C_{m;\varepsilon / 2}.
	\end{equation}
	A similar argument implies  \begin{equation}\label{eqn:busemann3}
	\Prob \left((u_{m}^{-1} \cdot \X | \omega_{n}^{-1} \cdot \X)_{\X} <  (\lambda - \varepsilon) m \right)  \le 3C_{m;\varepsilon/2}.
	\end{equation}
	The probability of the events in Equation \ref{eqn:busemann1}, \ref{eqn:busemann2} and \ref{eqn:busemann3} is summable. Thus, by the Borel--Cantelli lemma, $\Prob-$a.e. $\w$ satisfies \[ \begin{aligned}
	(u_{m}^{-1} \cdot \X | \omega_{m} \cdot \X)_{\X} & \le \varepsilon n,\\
	(\w_{m} \cdot \X | \omega_{n} \cdot \X)_{\X} & \ge (\lambda - \varepsilon) m, \mbox{ and}\\
	(u_{m}^{-1} \cdot \X | \omega_{n}^{-1} \cdot \X)_{\X} & \ge  (\lambda - \varepsilon) m
	\end{aligned}\]
	for sufficiently large $n$. Then Lemma \ref{lem:gromovApprox} asserts that $(\w_{n}^{-1} \cdot \X | \w_{n} \cdot \X)_{\X} = (u_{m}^{-1} \cdot \X | \w_{m} \cdot \X)_{\X} + O(\delta) \le 2\varepsilon n$ for sufficiently large $n$.
	
	We now return to Lemma \ref{lem:mttranslation}. There exists an event $\Omega_{k}$ for $\varepsilon = \lambda/(100k)$ such that $\Prob(\Omega_{k}) = 1$ and $\w \in \Omega_{k}$ satisfies \[
	|d(\X, \w_{n} \cdot \X) - \lambda n| < \varepsilon n \mbox{ and } (\w_{n}^{-1} \cdot \X | \w_{n} \cdot \X)_{\X} \le 2\varepsilon n
	\]for sufficiently large $n$. Thus, by Lemma \ref{lem:mttranslation}, \[ \begin{aligned}
	\left|\frac{1}{n} (\mbox{translation length of } \w_{n}) - \lambda \right| & \le \frac{1}{n} \left|d(\X, g \cdot \X) +2(\w_{n}^{-1} \cdot \X | \w_{n} \cdot \X)_{\X} - n\lambda \right| \\
	& \quad + \frac{1}{n} O(\delta)\\
	& \le 5\varepsilon + \frac{1}{n} O(\delta)
	\end{aligned}
	\] for sufficiently large $n$. Then $\lim_{n} \frac{1}{n}(\mbox{translation length of } \w_{n})  = \lambda >0$ for $\w \in \Omega:=\cap_{k \in \N} \Omega_{k}$ and $\Prob(\Omega) = 1$. In particular, the translation length of $\w_{n}$ becomes positive so $\w_{n}$ is eventually pseudo-Anosov.
	\end{proof}

It is still open whether Theorem \hyperref[thm:secondmoment]{A} holds under the finite first moment condition. However, a weaker assertion does hold true as follows.

\begin{thm} \label{thm:inprobconv}
Let $(A, B)$ be a filling pair of multicurves on $S$ and $\nu : \Mod(S) \to [0, 1]$ be a non-elementary probability measure with $\supp \nu \subseteq \langle T_A, T_B \rangle$. If the first moment with respect to the Teichm\"uller metric is finite, i.e. for any $\X \in \T(S)$, $$\sum_{g \in \Mod(S)} d_{\T}(\X, g \cdot \X)\nu(g)  < \infty,$$ then as $n \to \infty$ we have $${1 \over n} \log \lambda_{\w_n} \to h(\w) \quad \mbox{in probability.}$$
\end{thm}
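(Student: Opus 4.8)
The plan is to transport the whole problem to the hyperbolic plane via Thurston's construction and then re-run the argument of Theorem~\ref{thm:secondmoment}, but with the \emph{summable} Benoist--Quint bounds of Lemma~\ref{lem:bqsummable} (which cost a moment of order $p>1$) replaced by bounds that merely \emph{tend to zero}, and which are available under a finite first moment. First I would fix notation: let $\rho:\langle T_A,T_B\rangle\to\PR=\operatorname{Isom}^{+}(\mathbb{H}^2)$ be the representation produced by Thurston's construction and let $\X\in f(\mathbb{H}^2)$ be the point with $d_{\T}(\X,g\cdot\X)=d_{\mathbb{H}^2}(\X,\rho(g)\cdot\X)=\log\|\rho(g)\|$ for all $g$. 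Then $\rho_{*}\nu$ is a non-elementary probability measure on $\operatorname{Isom}^{+}(\mathbb{H}^2)$ with finite first moment (two pseudo-Anosov elements of $\langle\supp\nu\rangle$ with disjoint fixed-point sets in $\PMF(S)$ map under $\rho$ to hyperbolic isometries with distinct axes), and by Theorem~\ref{thm:thurstonconstruction} the stretch factor satisfies $\lambda_{\w_n}=e^{\tau_n}$ whenever $\w_n$ is pseudo-Anosov, where $\tau_n$ denotes the translation length of $\rho(\w_n)$ on $\mathbb{H}^2$; by Maher \cite{maher2011random} the event $\{\w_n\text{ pseudo-Anosov}\}$ has probability tending to $1$. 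Finally, by Masai's Theorem~\ref{thm:masai}, $h(\w)$ equals the (almost surely constant) drift $\lambda:=\lim_n\tfrac1n d_{\T}(\X,\w_n\cdot\X)>0$. So the theorem reduces to the statement $\tfrac1n\tau_n\to\lambda$ in probability.

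One inequality is immediate: a translation length never exceeds a displacement, so $\tfrac1n\tau_n\le\tfrac1n d_{\mathbb{H}^2}(\X,\rho(\w_n)\cdot\X)=\tfrac1n d_{\T}(\X,\w_n\cdot\X)\to\lambda$ almost surely, whence $\Prob(\tfrac1n\tau_n>\lambda+\varepsilon)\to0$. For the lower estimate I would invoke the Maher--Tiozzo formula of Lemma~\ref{lem:mttranslation}: once $d(\X,\w_n\cdot\X)\ge 2(\w_n\cdot\X | \w_n^{-1}\cdot\X)_{\X}+C_0$, one has $\tau_n\ge d(\X,\w_n\cdot\X)-2(\w_n\cdot\X | \w_n^{-1}\cdot\X)_{\X}-O(\delta)$. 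On the event where $d(\X,\w_n\cdot\X)\ge(\lambda-\varepsilon/2)n$ and $(\w_n\cdot\X | \w_n^{-1}\cdot\X)_{\X}\le\varepsilon n/8$ the hypothesis of Lemma~\ref{lem:mttranslation} is automatic for large $n$ (since $\lambda>\varepsilon$) and $\tau_n\ge(\lambda-\varepsilon)n$. Thus, combining with the a.s.\ convergence $\tfrac1n d(\X,\w_n\cdot\X)\to\lambda$, the whole lower bound comes down to the single claim that $\tfrac1n(\w_n\cdot\X | \w_n^{-1}\cdot\X)_{\X}\to0$ in probability.

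To prove this Gromov-product claim I would copy the decomposition used in the proof of Theorem~\ref{thm:secondmoment}: set $m=\lceil n/2\rceil$, write $\w_n=\w_m u_m$ with $u_m:=\w_m^{-1}\w_n$, use that $\w_m$ and $u_m$ are independent and that $u_m$ has the same law as $\w_{n-m}$, and establish, for each fixed $\varepsilon\in(0,\lambda/10)$, that the three events $\{(u_m^{-1}\cdot\X | \w_m\cdot\X)_{\X}>\varepsilon n\}$, $\{(\w_m\cdot\X | \w_n\cdot\X)_{\X}<(\lambda-\varepsilon)m\}$, and $\{(u_m^{-1}\cdot\X | \w_n^{-1}\cdot\X)_{\X}<(\lambda-\varepsilon)m\}$ each have probability tending to $0$; then Lemma~\ref{lem:gromovApprox} forces $(\w_n^{-1}\cdot\X | \w_n\cdot\X)_{\X}\le 2\varepsilon n+O(\delta)$ off a set of vanishing probability. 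The geodesic-comparison steps relating these inequalities (the projection arguments along half-geodesics $\overrightarrow{\X x}$ to Busemann boundary points) are verbatim those in the proof of Theorem~\ref{thm:secondmoment}; the only new ingredient is the probabilistic input. In place of Lemma~\ref{lem:bqsummable} one needs, for every $\varepsilon>0$: (a) $\Prob(d(\X,\w_n\cdot\X)<(\lambda-\varepsilon)n)\to0$, which is immediate from Masai's theorem (the a.s.\ convergence $\tfrac1n d_{\T}(\X,\w_n\cdot\X)\to\lambda$, ultimately Kingman's subadditive ergodic theorem under the first moment); and (b) the uniform sublinearity $\sup_{x\in\partial_B\mathbb{H}^2}\Prob\big((\w_n\cdot\X | x)_{\X}>\varepsilon n\big)\to0$, together with its mirror for $\w_n^{-1}$.

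The main obstacle is exactly item (b): with only a first moment one loses the summable tails $\sum_n n^{p-2}C_n<\infty$ of Lemma~\ref{lem:bqsummable}, so the qualitative statement must be argued directly. I would derive it from the Maher--Tiozzo theory underlying Lemma~\ref{lem:mttranslation} (\cite{maher2018random}): a non-elementary random walk on $\operatorname{Isom}^{+}(\mathbb{H}^2)$ converges almost surely to a boundary point $\xi^{+}$, its hitting measure on $\partial\mathbb{H}^2$ is non-atomic, and---crucially under a finite first moment---it tracks the geodesic ray $[\X,\xi^{+})$ sublinearly; together with the uniform decay of shadows for non-elementary walks this yields that, for every $\varepsilon>0$, the probability that $\w_n\cdot\X$ lies in the $\varepsilon n$-shadow seen from $\X$ of a point outside a fixed large neighbourhood of $[\X,\xi^{+})$ is $o(1)$ uniformly in the direction of the shadow, which is precisely (b). In fact, since the translation-length statement $\tfrac1n\tau(\w_n)\to\lambda$ \emph{in probability} for non-elementary finite-first-moment walks on a proper hyperbolic space is essentially already contained in \cite{maher2018random}, a shorter route is to quote it directly and combine it with Thurston's construction and Masai's theorem; the self-contained argument above is the "for completeness" version parallel to the proof of Theorem~\ref{thm:secondmoment}.
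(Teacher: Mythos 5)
Your overall plan is sound and runs parallel to the paper's: transport everything to $\mathbb{H}^2$ via $\rho$, note $h(\w)$ equals the drift $L_{\T}$ by Masai, reduce to controlling the translation length $\tau_n$ of $\rho(\w_n)$, get the upper bound for free from $\tau_n\le d(\X,\w_n\cdot\X)$, and attack the lower bound via the Maher--Tiozzo formula of Lemma~\ref{lem:mttranslation} after splitting $\w_n=\w_m u_m$ at $m=\lceil n/2\rceil$. However, you make the probabilistic input substantially harder than it needs to be. You propose to re-run the half-geodesic projection arguments from the proof of Theorem~A and to establish as your ``item (b)'' a uniform estimate $\sup_{x\in\partial_B\mathbb{H}^2}\Prob\bigl((\w_n\cdot\X\mid x)_{\X}>\varepsilon n\bigr)\to0$, which you then try to extract from shadow decay and sublinear tracking. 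The paper's proof sidesteps all of this: it quotes Lemma~\ref{lem:estimateProb} (Maher--Tiozzo's Lemmas~5.9 and~5.11), which \emph{directly} gives the three in-probability bounds on $(u_m^{-1}\cdot\X\mid\w_m\cdot\X)_{\X}$, $(\w_m\cdot\X\mid\w_n\cdot\X)_{\X}$, and $(u_m^{-1}\cdot\X\mid\w_n^{-1}\cdot\X)_{\X}$ for group-orbit points (not Busemann boundary points), and does so \emph{without any moment hypothesis at all}. Thus the Busemann-boundary reformulation and the $\overrightarrow{\X x}$ geodesic comparisons were only ever needed in Theorem~A to mesh with the Benoist--Quint form of the exponential bounds; once you only want ``probability tending to zero'' you can skip that machinery entirely. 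The one place the first-moment hypothesis is actually used in the paper's proof is the same as your item (a): to guarantee $\frac1n d(\X,\w_n\cdot\X)\to L_{\T}>0$ a.s., which feeds both the hypothesis of Lemma~\ref{lem:mttranslation} and the final error estimate. You yourself observe at the end of the proposal that a direct quotation would be shorter --- that is essentially what the paper does, just at the level of the three Gromov-product bounds rather than the translation-length statement itself. So: correct in outline, and would compile into a valid proof if the shadow-estimate derivation of (b) were written out carefully, but noticeably longer than necessary and shadowing a reference (Maher--Tiozzo Lemma~5.9, 5.11) that already says exactly what you need.
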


\begin{proof}
Since $\nu$ has finite first moment, $\lim_{n \rightarrow \infty} {1 \over n}d_{\T}(\X, \w_n \cdot \X) =h(\w)$ is almost surely equal to the drift $L_{\T} >0$ (cf. \cite{masai2018topological}, \cite[Theorem 1.2]{maher2018random}, \cite[Proposition 3.3]{benoist2016central}).

We again use the notation $m= m(n) = \lceil n/2 \rceil$ and $u_{m} := \w_{m}^{-1} \w_{n}$. The following lemma of Maher and Tiozzo holds without any moment condition.
	
	\begin{lem}[{\cite[Lemma 5.9, 5.11]{maher2018random}}]\label{lem:estimateProb}
		Let $G$ be a countable group of isometries of a separable Gromov hyperbolic space, and let $\nu$ be a non-elementary probability measure on $G$. Then for sufficiently small $\eta$,
		$$\begin{aligned} \Prob \left( (u_{m}^{-1}\cdot \X | \w_{m} \cdot \X)_{\X} > \eta n/ 2 \right) & \to 0, 
		\\ \Prob \left( ( \w_{m} \cdot \X | \w_{n} \cdot \X )_{\X} < \eta n \right) & \to 0, \mbox{ and} 
		\\ \Prob \left( (u_{m}^{-1}\cdot \X | \w_{n}^{-1} \cdot \X )_{\X} < \eta n \right) & \to 0 \end{aligned}$$ as $n \to \infty$.
	\end{lem}
	
	Suppose now that $0<\varepsilon<L_{\T}/10$ is given. We then define \[
	E_{n} =\left\{ \w : \left|\frac{1}{k} d(\X, \w_{k} \cdot \X) - L_{\T}\right| < \varepsilon/10 \mbox{ for all } k\ge n\right\}.
	\] Then $E_{n} \nearrow E = \{ \w : \limsup |\frac{1}{k} d(\X, \w_{k} \cdot \X) - L_{\T}| < \varepsilon/10\}$. Since $\frac{1}{k} d(\X, \w_{k} \cdot \X)$ tends to $L_{\T}$ almost surely, $P(E_{n}) \nearrow 1$.
	
	Next, let $\eta = \varepsilon/100$ and define\[
	\Omega_{n} = \left\{ \w  : \begin{array}{cc}(u_{m}^{-1}\cdot \X | \w_{m} \cdot \X)_{\X} \le \eta n/ 2, \quad ( \w_{m} \cdot \X | \w_{n} \cdot \X )_{\X} \ge \eta n, \\ \mbox{and } (u_{m}^{-1}\cdot \X | \w_{n}^{-1} \cdot \X )_{\X} \ge \eta n\end{array}  \right\}.
	\]
	Note that by Lemma \ref{lem:estimateProb}, $\Prob(\Omega_{n}^{c})$ tends to 0. We now let $N =1000 O(\delta) / \varepsilon$. Then for $n > N$ and $\w \in E_{n} \cap \Omega_{n}$, Lemma \ref{lem:gromovApprox} implies \[
	2(\w_{n} \cdot \X | \w_{n}^{-1} \cdot \X)_{\X} \le \eta n + 2O(\delta) \le 2\eta n < (L_{\T} - \varepsilon)n \le d(\X, \w_{n}\cdot \X).
	\]
	Thus, we can apply Lemma \ref{lem:mttranslation} and deduce that \[\begin{aligned}
	\left|\frac{1}{n} \log \lambda_{\w_{n}} - L_{\T}\right| &\le \frac{1}{n}\left| nL_{\T} -  d(\X, \w_{n} \X) -  2 (\w_{n} \cdot \X | \w_{n}^{-1} \cdot \X)_{\X} \right| + \frac{1}{n} O(\delta)
	\\ &\le (\varepsilon/10) + 2\eta + (\varepsilon/1000) < \varepsilon.
	\end{aligned}
	\]
	In conclusion, for $n>N$, \[
	\Prob \left( \left|\frac{1}{n} \log \lambda_{\w_{n}} - L_{\T}\right|  \ge \varepsilon \right) \le \Prob (E_{n}^{c} \cup  \Omega_{n}^{c}).
	\]
	Since $\Prob(E_n^{c} \cup \Omega_n^{c}) \to 0$ as $n \to \infty$, the desired convergence is proved.
\end{proof}

This in particular implies that $\Prob(\w_{n} \mbox{ is pseudo-Anosov}) \rightarrow 1$ as $n \rightarrow \infty$. This result in a general setting is due to Maher \cite{maher2011random}.

As a corollary of Theorem \ref{thm:inprobconv}, we can deduce the almost sure equality between the topological entropy and limsup of the linear growth rate of translation lengths. This was also observed by Dahmani and Horbez in \cite[Remark 2.7, 3.2]{dahmani2018spectral}. For the sake of completeness, we include a proof of the result below. 

\begin{cor}
	Let $(A, B)$ be a filling pair of multicurves on $S$ and $\nu : \Mod(S) \to [0, 1]$ be a non-elementary probability measure with $\supp \nu \subseteq \langle T_A, T_B \rangle$. If the first moment with respect to the Teichm\"uller metric is finite, then for $\Prob-a.e.$ $\w \in \Mod(S)^{\N}$, we have $$\limsup_{n \to \infty} {1 \over n} (\mbox{translation length of } \w_{n}) = h(\w)$$ where $h(\w)$ is the topological entropy of a random walk.
\end{cor}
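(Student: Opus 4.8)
The plan is to sandwich $\limsup_{n\to\infty}\tfrac1n\,\ell(\w_n)$, where $\ell(\w_n)$ denotes the translation length of $\w_n$, between $h(\w)$ from above and from below. For the upper bound one uses Masai's theorem directly; for the lower bound one combines the in-probability convergence of Theorem \ref{thm:inprobconv} with a subsequence extraction. Recall throughout that, by Theorem \ref{thm:bers} and Theorem \ref{thm:thurstonentropy}, $\ell(\w_n)=\log\lambda_{\w_n}$ whenever $\w_n$ is pseudo-Anosov (with the invariant axis lying in the Teichm\"uller disk of the construction), while $\ell(\w_n)=0$ otherwise, and that $h(\w)=L_{\T}>0$ is an almost surely constant quantity.

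\textbf{Upper bound.} For an isometry $g$ of a metric space and any point $p$ one has $\ell(g)=\inf_q d(q,g\cdot q)\le d(p,g\cdot p)$. Applying this to $\w_n$ acting on $(\T(S),d_{\T})$ with $p=\X$ gives $\tfrac1n\,\ell(\w_n)\le\tfrac1n\,d_{\T}(\X,\w_n\cdot\X)$ for every $n$. Since $\nu$ is non-elementary with finite first moment, Theorem \ref{thm:masai} yields $\tfrac1n\,d_{\T}(\X,\w_n\cdot\X)\to h(\w)$ for $\Prob$-a.e.\ $\w$, and hence $\limsup_{n\to\infty}\tfrac1n\,\ell(\w_n)\le h(\w)$ almost surely.

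\textbf{Lower bound.} Theorem \ref{thm:inprobconv} (more precisely its proof) shows that for every $\varepsilon>0$ the probability that $\w_n$ is pseudo-Anosov and $\bigl|\tfrac1n\log\lambda_{\w_n}-h(\w)\bigr|<\varepsilon$ tends to $1$; since $\ell(\w_n)=\log\lambda_{\w_n}$ on that event, this says precisely that $\tfrac1n\,\ell(\w_n)\to h(\w)$ in probability. Convergence in probability produces a \emph{deterministic} subsequence $(n_k)$ along which the convergence is almost sure: choose $n_k$ with $\Prob\bigl(\bigl|\tfrac1{n_k}\ell(\w_{n_k})-h(\w)\bigr|>1/k\bigr)<2^{-k}$ and apply Borel--Cantelli. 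Along this subsequence, $\limsup_{n\to\infty}\tfrac1n\,\ell(\w_n)\ge\lim_{k\to\infty}\tfrac1{n_k}\ell(\w_{n_k})=h(\w)$ almost surely. Combining the two bounds gives $\limsup_{n\to\infty}\tfrac1n\,\ell(\w_n)=h(\w)$ for $\Prob$-a.e.\ $\w$.

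The argument has no real obstacle once Theorem \ref{thm:inprobconv} and Theorem \ref{thm:masai} are in hand; the only point requiring care is bookkeeping. First, the in-probability statement must be read for the translation length $\ell(\w_n)$ itself (defined for all $\w_n$) rather than for $\log\lambda_{\w_n}$ (defined only on the pseudo-Anosov locus) -- but this is immediate, since the good event already forces $\w_n$ to be pseudo-Anosov with $\ell(\w_n)=\log\lambda_{\w_n}$. Second, one should note that a pointwise upper bound for the translation length (valid for every $\w$, not just almost every) is what makes the $\limsup$ on the left side legitimate even on the sample paths where infinitely many $\w_n$ fail to be pseudo-Anosov.
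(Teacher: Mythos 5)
Your proof is correct and follows essentially the same route as the paper's: the lower bound is obtained by extracting a deterministic subsequence from the in-probability convergence of Theorem \ref{thm:inprobconv} and applying Borel--Cantelli, and the upper bound combines Bers' translation-length inequality with Masai's theorem. The one small improvement you make is the explicit bookkeeping distinguishing the translation length $\ell(\w_n)$ (defined for all $\w_n$) from $\log\lambda_{\w_n}$ (defined only when $\w_n$ is pseudo-Anosov), which the paper elides.
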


\begin{proof}
For each $k$, we take $n_{k}$ such that \[
\Prob\left(\left|\frac{1}{n_{k}} \log \lambda_{\omega_{n_{k}}} - h(\omega) \right| > 2^{-k}\right) < 2^{-k}.
\]
The Borel--Cantelli lemma implies that $\Prob-a.e.$ $\w \in \Mod(S)^{\N}$ has a subsequence $(\w_{n_{k}})_{k > K(\w)}$ such that $|\frac{1}{n_{k}} \log \lambda_{\w_{n_{k}}} - h(\omega)| \le 2^{-k}$. Thus, it makes sense to consider the quantity $\limsup_{n} \frac{1}{n} \log \lambda_{\omega_{n}} \ge h(\omega)$ for such paths.

	The reverse inequality follows from Theorem \ref{thm:bers}: for pseudo-Anosov $\w_n$, we have $\log \lambda_{\w_n} \le d_{\T}(\X, \w_n \cdot \X)$ so $$\limsup_{n \to \infty} {1 \over n} \log \lambda_{\w_n} \le h(\w)$$ for $\Prob-a.e.$ $\w \in \Mod(S)^{\N}$, as a result of Theorem \ref{thm:masai}. 
	\end{proof}

\subsection{Eventually pseudo-Anosov behavior of random walks}

One consequence of Theorem \hyperref[thm:secondmoment]{A} is that almost every random walk associated with Thurston's construction eventually becomes pseudo-Anosov. Our purpose is to obtain this phenomenon without any moment condition.

\eventuallypA*  \label{thm:eventuallypA}

\begin{proof}
Recall Theorem \ref{thm:classofelts}: the only non-pseudo-Anosov elements that appear in $\langle T_A, T_B \rangle$ are the conjugates of powers of $T_A$, $T_B$ and $T_A T_B$.

We first prove that the conjugates of powers of $T_A$ do not appear infinitely often. Let $\{v_{1}, \ldots, v_{r}\}$ be a set of free generators of $\langle \supp \nu \rangle \le \langle T_{A}, T_B\rangle$. By \cite{hall1949coset} and \cite{burns1969free}, there exist $v_{r+1}, \ldots, v_k \in \langle T_A, T_B \rangle$ so that $v_1, \ldots, v_k$ freely generate a finite index subgroup $L$ of $\langle T_A, T_B \rangle$. Let $\{Lg_{1}, \ldots, Lg_{l}\}$ be the collection of the right cosets of $L$ in $\langle T_A, T_B \rangle$.

We define the abelianization map $Ab : L \rightarrow \Z^{k}$ sending each $v_{i}$ to $\vec{e}_{i}$ and a projection map $Q : \Z^{k} \rightarrow \Z^{r}$ discarding the last $k-r$ coordinates. Since $\langle \supp \nu \rangle \le L$, we obtain a new random walk $Q \circ Ab \circ \w$ on $\Z^{r} \subseteq \R^{r}$. For $i = 1, \ldots, l$, we pick $w_{i} \in \Z^{r} \setminus \{0\}$ such that \[
(Q \circ Ab)(\langle g_{i} T_{A}g_{i}^{-1}\rangle \cap L) \subseteq \langle w_{i} \rangle.
\]

Suppose now that $\w_{n} = gT_{A}^{j} g^{-1} \in L$ for some $\w$ and $g \in L g_{i}$. Since $\w_{n}$ and $g_{i} g^{-1}$ are in $L$, we deduce \[
(Q \circ Ab)(\w_{n}) = (Q \circ Ab)\left( (g_{i} g^{-1}) \w_{n} (g g_{i}^{-1})\right) = (Q \circ Ab)(g_{i} T_{A}^{j} g_{i}^{-1}) \in \langle w_{i} \rangle.
\]
Thus, if $\w$ visits the conjugates of powers of $T_{A}$ infinitely often, then $(Q \circ Ab \circ \w)$ visits $\langle w_{i} \rangle$ infinitely often for some $i$.

For each $i$, we consider the projection $P_{i} : \R^{r} \rightarrow \R^{r-1}$ onto the orthogonal complement of $w_{i}$. Suppose that $(P_{i} \circ Q \circ Ab)(\supp \nu) \cdot \theta = 0$ for some $\theta \in \R^{r-1} \setminus \{0\}$. This implies that $(P_{i} \circ Q \circ Ab)( \langle \supp \nu \rangle) \cdot \theta = P_{i}(\Z^{r}) \cdot \theta = 0$, which is a contradiction. Hence $P_{i} \circ Q \circ Ab \circ \w$ is truly $(r-1)$-dimensional; this implies the transience if $r \ge 4$ is assumed further (\cite{durrett2019probability}, \cite{chung2008distribution}). Consequently, we deduce\[
\Prob \left( \begin{matrix} \w_n \mbox{ visits the conjugates of} \\ \mbox{powers of } T_A \mbox{ i.o.} \end{matrix} \right) \le \sum_{i=1}^{l} \Prob\left((Q \circ Ab)(\w_{n}) \in \langle w_{i} \rangle \mbox{ i.o.}\right)=0.
\] when $r \ge 4$ is assumed\footnote{``i.o." is an abbreviation of ``infinitely often".}.

	It remains to deal with the cases $r = 2, 3$. Recall that $v_{1}, \ldots v_{r}$ freely generate $\langle \supp \nu \rangle$. Let $Ab = (Ab_{1}, \ldots, Ab_{r}) : \langle \supp \nu \rangle \rightarrow \Z^{r}$ be the abelianization map sending each $v_{i}$ to $\vec{e}_{i}$. For $k = 0, 1, 2, 3$, we define \[ \begin{aligned}
	A_{k} := \{ w \in \langle \supp \nu \rangle : Ab_{1}(w) + k Ab_{2}(w) \in 4\Z \}, \\ B_{k} := \{ w \in \langle \supp \nu \rangle : kAb_{1}(w) + Ab_{2}(w) \in 4\Z \}.\end{aligned}
	\]
	Note that $\left( \cup_{k} A_{k} \right) \cup \left(\cup_{k} B_{k} \right) = \langle \supp \nu \rangle$. Further, $\langle \supp \nu \rangle / A_{k}$ and $\langle \supp \nu \rangle / B_{k}$ are isomorphic to $\Z / 4\Z$. Thus, $A_{k}$ and $B_{k}$ are index 4 subgroups of $\langle \supp \nu \rangle$; the Nielsen--Schreirer theorem asserts that $A_{k}$, $B_{k}$ are of rank $4r-3$. See Figure \ref{fig:coverA0} for the example $A_0$.
	
		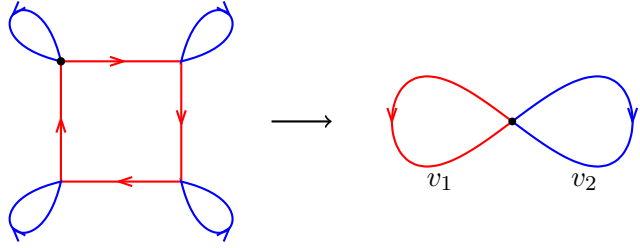
\begin{figure}[h]
		\centering
		\def\n{2.6}
		\def\m{0.9}
		\begin{tikzpicture}[scale=0.8, every node/.style={scale=1}]

		\begin{scope}[shift={(1.5, 0)}]
		\draw[thick, red,  decoration={markings, mark=at position 0.13 with {\draw (-0.18, 0.06) -- (0, 0) -- (-0.18, -0.06);}, mark=at position 0.38 with {\draw (-0.18, 0.06) -- (0, 0) -- (-0.18, -0.06);;}, mark=at position 0.63 with {\draw (-0.18, 0.06) -- (0, 0) -- (-0.18, -0.06);;},mark=at position 0.88 with {\draw (-0.18, 0.06) -- (0, 0) -- (-0.18, -0.06);}}, postaction={decorate}] (-1, -1) -- (-1, 1) -- (1, 1) -- (1, -1) -- cycle;
		\draw[thick, blue, decoration={markings, mark=at position 0.505 with {\draw (-0.18, 0.06) -- (0, 0) -- (-0.18, -0.06);}}, postaction={decorate}] (1, 1) .. controls (1.4, 2.7) and (2.7, 1.4) .. cycle;
		\draw[thick, blue, decoration={markings, mark=at position 0.505 with {\draw (-0.18, 0.06) -- (0, 0) -- (-0.18, -0.06);}}, postaction={decorate}]  (1, -1) .. controls (1.4, -2.7) and (2.7, -1.4) .. cycle;
		\draw[thick, blue, decoration={markings, mark=at position 0.505 with {\draw (-0.18, 0.06) -- (0, 0) -- (-0.18, -0.06);}}, postaction={decorate}]  (-1, -1) .. controls (-1.4, -2.7) and (-2.7, -1.4) .. cycle;
		\draw[thick, blue, decoration={markings, mark=at position 0.505 with {\draw (-0.18, 0.06) -- (0, 0) -- (-0.18, -0.06);}}, postaction={decorate}]  (-1, 1) .. controls (-1.4, 2.7) and (-2.7, 1.4) .. cycle;
		\fill (-1, 1) circle (0.07);
		\end{scope}
		
		\draw[thick, ->] (4, 0) -- (5, 0);
		
		\draw[thick, blue, domain=0:180,smooth,variable=\x, decoration={markings, mark=at position 0.5 with {\draw (-0.18, 0.06) -- (0, 0) -- (-0.18, -0.06);;}}, postaction={decorate}] plot ({8+2*sin(\x)},{1.5*sin(\x)*cos(\x)});
		\draw[thick, red, domain=180:360,smooth,variable=\x, decoration={markings, mark=at position 0.5 with {\draw (-0.18, 0.06) -- (0, 0) -- (-0.18, -0.06);;}}, postaction={decorate}] plot ({8+2*sin(\x)},{1.5*sin(\x)*cos(\x)});
		\fill (8, 0) circle (0.065);
		\draw (6.8, -1) node {$v_{1}$};
		\draw (9.2, -1) node {$v_{2}$};
		
		\end{tikzpicture}
		\caption{Covering space for $A_{0}$ in the case $r = 2$.}
		\label{fig:coverA0}
	\end{figure}

	Let us focus on $A_{0}$ from now on. Note that $v_{1} \in \langle \supp \nu \rangle$ corresponds to a generator of $\langle \supp \nu \rangle / A_{0} \cong \Z / 4\Z$. If each element of $\supp \nu$ corresponds to one of $\bar{0}, \bar{2} \in \Z/4\Z$, so does each element of $\langle \supp \nu \rangle$. This contradicts the existence of $v_{1}$; thus, some element $f$ of $\supp \nu$ should correspond to a generator of $\Z/4\Z$. In particular, $f^4 \in \llangle \supp \nu \rrangle \cap A_{0}$ holds\footnote{$\llangle \cdot \rrangle$ denotes the semigroup generated by $\{\cdot\}$.}.
	
	We define a stopping time $\{n_{m}\}_{m \in \Z_{\ge 0}}$ as \[
	n_{0} = 0, \quad n_{m} = \inf\{n>n_{m-1} : \omega_{n} \in A_{0}\}.
	\]
	Then $(\w'_{m}) = (\omega_{n_{m}})$ becomes a new random walk on $A_{0}$ with transition probability $\nu'$. Note that $\llangle \supp \nu' \rrangle = \llangle \supp \nu \rrangle \cap A_{0}$. Note also that $\w_{n}$ visits the conjugates of powers of $T_{A}$ \textit{inside $A_{0}$} i.o. if and only if $\w'_{m}$ visits the conjugates of powers of $T_{A}$ i.o..

	Let $\{v'_{1}, \ldots, v'_{4r-3}\}$ be a set of free generators of $A_{0}$. By \cite{hall1949coset} and \cite{burns1969free}, there exist $v'_{4r-2}, \ldots, v'_{k}\in \langle T_A, T_B \rangle$ so that $v'_{1}, \ldots, v'_{k}$ freely generate a finite index subgroup $L$ of $\langle T_A, T_B \rangle$. Let $\{L g_{1}, \ldots, Lg_{l}\}$ be the collection of the right cosets of $L$. 
	
	As before, we define the abelianization map $Ab' : L \rightarrow \Z^{k}$ sending each $v_{i}'$ to $\vec{e}_{i}$ and a projection map $Q : \Z^{k} \rightarrow \Z^{4r-3}$ discarding the last $k-~4r+3$ coordinates. For $i = 1, \ldots, l$, we pick  $w_{i} \in \Z^{4r-3} \setminus \{0\}$ such that $(Q \circ~Ab')(\langle g_{i} T_{A} g_{i}^{-1} \rangle \cap L) \subseteq \langle w_{i}\rangle$. We then similarly deduce that \[
	\Prob \left( \begin{matrix} \w'_m \mbox{ visits the conjugates of} \\ \mbox{powers of } T_A \mbox{ i.o.} \end{matrix} \right) \le \sum_{i=1}^{l} \Prob\left((Q \circ Ab')( \w'_{m}) \in \langle w_{i} \rangle \mbox{ i.o.}\right).
	\]
	
	For each $i$, we consider the projection $P_{i} : \R^{4r-3} \rightarrow \R^{4r-4}$ onto the orthogonal complement of $w_{i}$. Suppose that $(P_{i} \circ Q \circ Ab')(\supp \nu') \cdot \theta = 0$ for some $\theta \in \R^{4r-4} \setminus \{0\}$. In particular, $f^4 \in \llangle \supp \nu \rrangle \cap A_{0} = \llangle \supp \nu' \rrangle$ also satisfies $(P_{i} \circ Q \circ Ab')(f^4) \cdot \theta = 0$. 
	
	Since $(P_{i} \circ Q \circ Ab')(\langle \supp \nu \rangle\cap A_{0}) = P_{i}(\Z^{r}) \cdot \theta \neq 0$, $(P_{i} \circ Q \circ Ab') (w) \cdot \theta \neq 0$ for some $w \in \langle \supp \nu \rangle \cap A_{0}$. Here, $w = b_{1}^{k_{1}} \cdots b_{m}^{k_{m}}$ for some  alphabets $b_{j} \in \supp \nu$ and $k_{j} \in \Z$. Using $f$, we express $w$ as a concatenation of elements in $A_{0}$: \[
	w = (b_{1}^{k_{1}} f^{-t_{1}} ) (f^{t_{1}} b_{2}^{k_{2}}f^{-t_{2}}) \cdots (f^{t_{m-1}} b_{m}^{k_{m}}),
	\]
	where $f^{t_{j}}$ and $b_{1}^{k_{1}} \cdots b_{j}^{k_{j}}$ belong to the same right coset of $A_{0}$ in $\langle \supp \nu \rangle$.
	
	Since $(P_{i} \circ Q \circ Ab')(w) \cdot \theta \neq 0$, we deduce that $(P_{i} \circ Q \circ Ab')(f^{t} b^{k} f^{s}) \cdot \theta \neq 0$ for some $b \in \supp \nu$ and $t, k, s \in \Z$ such that $f^tb^kf^s \in A_0$. By taking inverse if necessary, we may suppose $k \ge 0$. Recall again that $f^4 \in A_0$ and $(P_{i} \circ Q \circ Ab')(f^4) \cdot \theta = 0$; by multiplying $f^{-4t}$ or $f^{-4s}$ if necessary, we may also assume that $t, s \ge 0$. However, $f^{t} b^{k} f^{s}$ then belongs to $\llangle \supp \nu \rrangle \cap A_{0} = \llangle \supp \nu' \rrangle$, which contradicts the assumption. 
	
	Thus, $(P_{i} \circ Q \circ Ab')(\supp \nu') \cdot \theta \neq 0$ for each $\theta \in \R^{4r-4} \setminus \{0\}$; $P_{i} \circ Q \circ Ab' \circ \w'$ is truly $(4r-4)$-dimensional and thus transient. Consequently, we deduce \[
\Prob \left( \begin{matrix} \w_{n} \mbox{ visits the conjugates of} \\ \mbox{powers of } T_A \mbox{ in } A_{0} \mbox{ i.o.} \end{matrix} \right) \le \sum_{i=1}^{l} \Prob\left((Q \circ Ab')(\w'_{m}) \in \langle w_{i} \rangle \mbox{ i.o.}\right)=0.
\]

Similarly, almost every $\w_{n}$ eventually avoids the conjugates of powers of $T_{A}$ in each of $A_{k}$ and $B_{k}$. Put together, almost every $\w_{n}$ eventually avoids any conjugates of powers of $T_{A}$. Similar argument applies to the conjugates of powers of $T_{B}$ or $T_{A}T_{B}$. Then Theorem \ref{thm:classofelts} completes the proof.
\end{proof}

%%%%%%%%%%%%%%%%%%%%%%%%%%%%%%%%%%%%%%%%%%%%%%%%
%
%				Applications
%
%%%%%%%%%%%%%%%%%%%%%%%%%%%%%%%%%%%%%%%%%%%%%%%%

\medskip
\section{Applications}	\label{sec:appmat}

\subsection{Volumes of random mapping tori from Thurston's construction}

As we explained earlier, the topological entropy of a random walk is closely related to the dynamics of random mapping classes. In this section, we discuss applications of the eventually pseudo-Anosov behavior of random walks and the spectral theorem. As the first application, we estimate the hyperbolic volumes of random mapping tori with monodromies obtained from Thurston's construction.

\begin{definition}[Mapping torus]
	For a surface $S$ and a mapping class $\varphi \in \Mod(S)$, the \emph{mapping torus $M_{\varphi}$ with monodromy $\varphi$} is the closed 3-manifold definde by $$M_{\varphi} := S \times [0, 1] / (x, 0) \sim (\varphi(x), 1).$$
\end{definition}

Thurston proved that the geometry of a mapping torus $M_{\varphi}$ reads off the nature of the mapping class $\varphi$. For instance, according to \cite{otal1996theoreme}, $\varphi \in~\Mod(S)$ is pseudo-Anosov if and only if $M_{\varphi}$ admits a hyperbolic structure, which is unique by the Mostow rigidity theorem \cite{mostow1968quasi}. In this case, the \emph{hyperbolic volume} $\vol(M_{\varphi})$ of $M_{\varphi}$ serves as an invariant of the mapping class. 

Once we have a probability measure $\nu$ on $\Mod(S)$ and the induced one $\Prob$ on $\Mod(S)^{\N}$, we get a sequence of random mapping tori $M_{\w_n}$ corresponding to a random walk $\w = (\w_n) \in \Mod(S)^{\N}$. For an eventually pseudo-Anosov sample path $\w_{n}$, it makes sense to consider the asymptotes of the hyperbolic volume of $M_{\w_{n}}$. For instance, Viaggi proved the following.

\begin{thm}[Viaggi, \cite{viaggi2019volumes}] \label{thm:viaggi}
	Suppose that the probability measure $\nu$ on $\Mod(S)$ satisfies $\langle \supp \nu \rangle = \Mod(S)$ and it has a finite symmetric support. Then for the induced probability measure $\Prob$ on $\Mod(S)^{\N}$, $$\lim_{n \to \infty} {1 \over n } \vol(M_{\w_n}) = V\quad \mbox{almost surely}$$ for some $V > 0$.
\end{thm}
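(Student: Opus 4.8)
The plan is to split the claim into a soft part—namely that $\tfrac{1}{n}\vol(M_{\w_n})$ is almost surely trapped between two positive constants, so that any limit is automatically positive—and a hard part, namely that the rate genuinely converges. For the soft part: since $\langle\supp\nu\rangle=\Mod(S)$ is non-elementary and $\nu$ is finitely supported, Theorem~\ref{thm:DH} gives that $\w_n$ is eventually pseudo-Anosov almost surely, so $M_{\w_n}$ is hyperbolic for all large $n$ and $\vol(M_{\w_n})$ is defined. By Brock's theorem, $\vol(M_{\w_n})$ is comparable, up to a multiplicative and an additive constant depending only on $S$, to the stable translation length $\ell_{\mathcal{P}}(\w_n)$ of $\w_n$ acting on the pants graph $\mathcal{P}(S)$ (equivalently, to its Weil--Petersson translation length). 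Since $\mathcal{P}(S)$ is Gromov hyperbolic with a non-elementary $\Mod(S)$-action, the walk has positive drift there, and the usual comparison of displacement with translation length for random walks on hyperbolic spaces gives $\tfrac{1}{n}\ell_{\mathcal{P}}(\w_n)\to\ell_{\mathcal{P}}>0$ almost surely; this yields the two-sided bound.

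The difficulty is that the multiplicative constant in Brock's estimate forbids reading off a limit, so I would replace $\vol(M_{\w_n})$ by a combinatorial quantity that approximates it up to sublinear error and is additive, up to sublinear error, along the walk. Concretely: using the Masur--Minsky hierarchy subordinate to the quasigeodesic $(\w_k\cdot o)_{0\le k\le n}$ in $\C(S)$ and the Minsky / Brock--Canary--Minsky model-manifold technology, equip $M_{\w_n}$ with a Riemannian metric whose sectional curvatures are pinched in $[-1-\varepsilon_n,-1+\varepsilon_n]$ with $\varepsilon_n\to0$; its volume $\mathrm{Vol}^{\mathrm{mod}}(\w_n)$ is, up to a factor $1+\varepsilon_n$, a sum of universally bounded local terms indexed by the subsurface coefficients $d_Y(\mu_n^-,\mu_n^+)$ of the two endpoints, plus a term proportional to the length the hierarchy geodesic spends in the thick part. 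The Margulis lemma together with the Besson--Courtois--Gallot minimal-volume inequality then force $\lvert\vol(M_{\w_n})-\mathrm{Vol}^{\mathrm{mod}}(\w_n)\rvert=o(n)$ almost surely.

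Finally, write the model volume as a function $F_n$ of the first $n$ increments $g_1,\dots,g_n$. Using $\w_{n+m}=\w_n\cdot(g_{n+1}\cdots g_{n+m})$ together with the sublinear-Gromov-product estimate $(\w_n\cdot o\mid\w_n^{-1}\cdot o)_o=o(n)$—which is precisely the tracking estimate established inside the proof of Theorem~\hyperref[thm:secondmoment]{A}—one shows that the hierarchy for $\w_{n+m}$ is, outside a portion of sublinear length, the concatenation of the hierarchy for $\w_n$ with a shifted copy of that for $\w_m$; hence $F_{n+m}\le F_n+F_m\circ\theta^n+o(n+m)$, where $\theta$ is the Bernoulli shift on the space of increment sequences. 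An (almost-)subadditive ergodic theorem over the ergodic system $(\theta,\nu^{\otimes\N})$ then gives $\tfrac{1}{n}F_n\to V$ almost surely for a constant $V$, and transferring through the $o(n)$ comparison gives $\tfrac{1}{n}\vol(M_{\w_n})\to V$ almost surely, with $V>0$ by the two-sided bound. Equivalently, one may phrase this last step as geometric convergence of $M_{\w_n}$, based in its thick part, to the doubly degenerate surface-group hyperbolic manifold whose end invariants are the forward and backward limits of the walk, and identify $V$ as the almost-sure volume-accumulation rate along that manifold via Birkhoff's theorem.

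The main obstacle is the construction of the model metric and the two uniform estimates that come with it: that the pinching error and the volume discrepancy are $o(n)$ along almost every sample path, and that the hierarchies genuinely concatenate up to sublinear length. These rest on Minsky's ending-lamination geometry, the drilling/filling volume comparisons, and the tracking theory for random walks on $\Mod(S)$, and carry essentially all the technical weight; by contrast, once such a model is available, the ergodic-theoretic conclusion and the positivity of $V$ are routine.
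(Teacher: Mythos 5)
This statement is cited from Viaggi~\cite{viaggi2019volumes} and is not proved in the paper, so there is no in-text proof to compare against; the relevant comparison is with Viaggi's actual argument. Your sketch does capture its high-level architecture: a linear two-sided bound on $\vol(M_{\w_n})$ via Brock's volume/Weil--Petersson comparison, a Minsky / Brock--Canary--Minsky model-manifold construction replacing $\vol(M_{\w_n})$ by a combinatorial quantity with controlled error, approximate additivity of that quantity along the walk via sublinear tracking, and a subadditive ergodic theorem to extract the limit and positivity of $V$.

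However, the ``soft part'' as written contains a genuine error. You assert that the pants graph $\mathcal{P}(S)$ is Gromov hyperbolic and then invoke drift and translation-length results for random walks on hyperbolic spaces there. For a closed surface of genus $g \ge 2$ the complexity is $3g-3 \ge 3$, and by Brock--Farb the pants graph (equivalently Teichm\"uller space with the Weil--Petersson metric) is Gromov hyperbolic only when the complexity is at most $2$; thus the hypothesis fails in precisely the cases the theorem covers. The two-sided linear bound is still true, but it must be obtained differently --- for instance by combining positivity of the drift of the walk on the curve complex $\C(S)$ (which is $\delta$-hyperbolic; Maher--Tiozzo), the coarsely Lipschitz projection $\mathcal{P}(S) \to \C(S)$ together with the Masur--Minsky distance formula to pass translation-length lower bounds from $\C(S)$ to $\mathcal{P}(S)$, and the word-length upper bound on pants-graph displacement coming from finite support. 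Beyond this, the heart of your outline --- the model metric with $o(n)$ volume discrepancy, the hierarchy-concatenation step, and the almost-subadditive ergodic theorem --- is the correct skeleton, but it defers the actual estimates to the cited literature rather than supplying them, so the proposal is a plausible roadmap aligned with Viaggi's proof rather than a self-contained argument.
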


According to Viaggi \cite{viaggi2019volumes}, the condition in Theorem \ref{thm:viaggi} can be weakened so that $\nu$ is non-elementary, i.e., $\langle \supp \nu \rangle$ is not necessarily the whole $\Mod(S)$ but a non-elementary subgroup. However, Viaggi's argument relies on the boundedness of $\supp \nu$, that is, $\sup_{g \in \supp \nu} d_{\T}(\X, g \cdot \X) < \infty$ for some $\X \in \T(S)$, in order to estimate $\vol(M_{\w_n})$ by applying triangle inequalities. Therefore, one cannot affirm that ${1 \over n} \vol(M_{\w_n})$ converges if the probability measure is not finitely supported.

We now consider this question in the setting of Thurston's construction. As before, let $(A, B)$ be a filling pair of multicurves such that $\langle T_A, T_B \rangle \cong F_{2}$ and $\nu$ be a non-elementary probability measure on $\langle T_A, T_B \rangle$. This induces a measure $\Prob$ on the path space $\Mod(S)^{\N}$.  If $\rk \langle \supp \nu \rangle < \infty$, then Theorem \hyperref[thm:eventuallypA]{B} asserts that $\Prob-a.e.$ random walk $\w$ has a pseudo-Anosov tail. In this circumstance, we can discuss the long-term behavior of $\vol(M_{\w_n})$.

Regarding the hyperbolic volume of a mapping torus, Kojima and McShane provided the following estimation in terms of the topological entropy of the monodromy. We note that the work of Brock \cite{brock2003weil} and Linch \cite{linch1974comparison} leads to a similar estimation with a different constant. 

\begin{thm}[Kojima--McShane, \cite{kojima2018normalized}]\label{thm:kojima}
	For any pseudo-Anosov $\varphi \in \Mod(S)$ and its topological entropy $h(\varphi) = \log \lambda_{\varphi}$, we have $$\vol(M_{\varphi}) \le -3\pi\chi(S) \cdot h(\varphi)$$ where $\chi(S)$ is the Euler characteristic of $S$.
\end{thm}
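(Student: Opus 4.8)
This is the theorem of Kojima and McShane \cite{kojima2018normalized}, and the plan is to reproduce the skeleton of their argument. Write $\lambda = \lambda_\varphi$ and $L = \log\lambda = h(\varphi)$; by Theorems \ref{thm:thurstonentropy} and \ref{thm:bers}, $L$ is the translation length of $\varphi$ on $(\T(S), d_{\T})$, realized along the invariant Teichm\"uller geodesic $\Gamma_\varphi$ (so $d_{\T}(\X_0, \varphi\cdot\X_0) = L$ for $\X_0 \in \Gamma_\varphi$, and this is the infimum over all of $\T(S)$), and $M_\varphi$ carries a hyperbolic metric \cite{otal1996theoreme} whose volume is a topological invariant \cite{mostow1968quasi}. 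The overall strategy is a \emph{``volume $\le$ area $\times$ length''} estimate: present $M_\varphi$, cut along a fiber, as a region swept out by incompressible surfaces; bound the area of each slice by Gauss--Bonnet; and bound the length of the transverse direction by the Teichm\"uller translation length $L$.

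Concretely, first I would realize the fiber $S \subset M_\varphi$ by an embedded incompressible surface (a least-area surface in its isotopy class), so that cutting $M_\varphi$ open along it produces a hyperbolically metrized copy of $S \times [0,1]$ whose two boundary surfaces are identified by $\varphi$. I would then build a sweepout $\{\Sigma_t\}_{t\in[0,1]}$ of this block by incompressible mapped-in surfaces (minimal, or pleated on the stable lamination of $\varphi$) interpolating between the two boundary pieces. The key input --- the technical heart of Kojima--McShane --- is that such a sweepout can be chosen so that (i) every $\Sigma_t$ inherits a metric of curvature $\le -1$, hence has area at most $-2\pi\chi(S)$ by Gauss--Bonnet, and (ii) the family of slices shadows the Teichm\"uller axis $\Gamma_\varphi$ closely enough that the transverse extent of the sweepout across one period is controlled by $d_{\T}(\X_0,\varphi\cdot\X_0) = L$; estimating the normal separation between consecutive slices and integrating (co-area/Fubini) then yields $\vol(M_\varphi) \le c\cdot\bigl(-2\pi\chi(S)\bigr)\cdot L$ for an explicit constant $c$, which Kojima--McShane's bookkeeping of the normal distortion and of the thick--thin geometry pins down so as to give $\vol(M_\varphi) \le -3\pi\chi(S)\,h(\varphi)$. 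As the excerpt notes, an inequality of the same shape, but with a weaker constant, also follows by combining Brock's bound \cite{brock2003weil} of $\vol(M_\varphi)$ by the Weil--Petersson translation length with Linch's comparison \cite{linch1974comparison} of the Weil--Petersson and Teichm\"uller metrics; that is a useful alternative but not the route to the sharp $3\pi$.

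The main obstacle is step (ii): coupling the transverse size of the surface block to the single number $L = \log\lambda_\varphi$. Off the axis one only has $d_{\T}(\X,\varphi\cdot\X) > L$; the region between two pleated surfaces realizing the stable lamination can be geometrically intricate; and the naive ``tube'' bound for the volume of a metric neighborhood of a surface grows exponentially, not linearly, in the radius. So one must genuinely organize the hyperbolic geometry of the $S\times[0,1]$ block relative to $\Gamma_\varphi$ --- anchoring the slices so they track the Teichm\"uller geodesic, controlling how far apart consecutive slices must be placed, and absorbing the contribution of short core curves --- which is the deformation-theoretic core of Kojima--McShane's work. Granted that estimate, the Gauss--Bonnet area bound and the final integration are routine, and Theorem \ref{thm:thurstonentropy} restates the conclusion in terms of the topological entropy $h(\varphi)$.
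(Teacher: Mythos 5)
First, a point of order: the paper does not prove Theorem \ref{thm:kojima} at all --- it is imported verbatim from \cite{kojima2018normalized} and then used as a black box in the proof of the random mapping-torus estimate --- so there is no internal argument to compare against, and your sketch has to be measured against Kojima and McShane's actual proof. Measured that way, the route you propose is not theirs, and the step you yourself flag as ``the main obstacle'' is a genuine gap rather than a technical wrinkle to be absorbed. There is no known sweepout of the fibered block by curvature $\le -1$ surfaces whose transverse extent is controlled \emph{linearly} by the Teichm\"uller translation length; as you note, tube-type volume bounds are exponential in the normal direction, and nothing in your outline explains how that exponential is beaten or where the specific constant $3\pi$ would come from. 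Asserting that Kojima--McShane's ``bookkeeping of the normal distortion'' pins this down misattributes to them an argument they do not make.

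What they actually do is entirely different: they work with the \emph{renormalized volume} $\vol_R$ of quasi-Fuchsian manifolds. One first shows (using the convergence of $Q(X,\varphi^n\cdot X)$ to the infinite cyclic cover of $M_\varphi$, together with the comparison between renormalized volume and convex-core volume) that
\[
\vol(M_\varphi)\;=\;\lim_{n\to\infty}\frac{1}{n}\,\vol_R\bigl(Q(X,\varphi^n\cdot X)\bigr).
\]
Then Schlenker's inequality $\vol_R(Q(X,Y))\le 3\sqrt{\pi|\chi(S)|/2}\;d_{\mathrm{WP}}(X,Y)$ is combined with Linch's comparison $d_{\mathrm{WP}}\le\sqrt{2\pi|\chi(S)|}\;d_{\T}$, and taking $X$ on the invariant Teichm\"uller axis gives $d_{\T}(X,\varphi^n\cdot X)=n\log\lambda_\varphi$; multiplying the two constants yields exactly $3\pi|\chi(S)|$. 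Note the irony relative to your last sentence: the passage through the Weil--Petersson metric and Linch's inequality is not the ``weaker alternative'' to be set aside --- it is precisely the road to the sharp constant. What is replaced is Brock's convex-core bound by the Weil--Petersson \emph{translation length}, not Linch's metric comparison. So your proposal, while it correctly identifies the shape of the statement (entropy as Teichm\"uller translation length, Gauss--Bonnet normalization by $\chi(S)$), rests its central estimate on a mechanism that is neither proved in your sketch nor present in the cited work.
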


Combining this estimation with Theorem \hyperref[thm:eventuallypA]{B} gives the following statement.

\mappingtori*

\begin{proof}

As we mentioned above, Theorem \hyperref[thm:eventuallypA]{B} ensures that $\w_{n}$ is eventually pseudo-Anosov for $\Prob-a.e.$ $\w$. By Theorem \ref{thm:kojima}, we deduce $$\vol(M_{\w_n}) \le -3 \pi \chi(S) \cdot \log \lambda_{\w_n}$$ for large $n$, where $\lambda_{\w_n}$ is the stretch factor of $\w_n$. By Theorem \ref{thm:bers}, $${1 \over n} \vol(M_{\w_n}) \le -3 \pi \chi(S) \cdot {1 \over n } \log \lambda_{\w_n} \le -3\pi \chi(S) \cdot {1 \over n} d_{\T}(\mathcal{Y}, \w_n \cdot \mathcal{Y})$$ for any $\mathcal{Y} \in \T(S)$. Then applying Theorem \ref{thm:masai} completes the proof.
\end{proof}

\begin{remark} \label{rem:kojimamcshane}
	This estimation suggests a similarity between the topological entropy of a random walk and that of a single pseudo-Anosov. Indeed, $$\vol(M_{\varphi^n}) = n \cdot \vol(M_{\varphi}), \quad h(\varphi^n) = n\cdot h(\varphi)$$ and thus $$\limsup_{n \to \infty} {1 \over n} \vol(M_{\varphi^n}) \le -3\pi \chi(S) \cdot h(\varphi)$$ for any pseudo-Anosov $\varphi \in \Mod(S)$.
	
\end{remark}

\subsection{The distribution of the stretch factors from Thurston's construction} \label{subsec:distribution}

We now discuss the distribution of the stretch factors. Note that for a non-elementary probability measure $\nu$ on $\langle T_A, T_B \rangle$ with finite first moment, Theorem \ref{thm:inprobconv} implies that for each $\varepsilon > 0$, \[
\Prob\left(e^{h(\w)- \varepsilon} < \lambda_{\w_{n}} < e^{h(\w) + \varepsilon}\right) \rightarrow 1\quad \mbox{as } n \rightarrow \infty.
\]

We now discuss the case of finitely supported measures in detail. Let $\nu$ be a finitely supported non-elementary measure on $\langle T_A, T_B \rangle$ and denote $p := \min \{ \nu(w) : w \in \supp \nu \} > 0$. Let $\X_{0} \in \T(S)$ be the point corresponding to the origin of the Poincar\'e disk $\mathbb{H}^{2}$. Note that $$\bigcup_{N = 0} ^{\infty} \{\w_N : \w = (\w_n) \in \Omega \} = \llangle \supp \nu \rrangle $$ for any $\Omega \subseteq \Mod(S)^{\N}$ with $\Prob(\Omega) = 1$. Hence, observing $\Prob-a.e. \w$ is sufficient to deal with the stretch factors of all pseudo-Anosovs in $\llangle \supp \nu \rrangle$.

Since $\nu$ is finitely supported in addition, even stronger estimate is available. Using the estimate of Azuma \cite{azuma1967sum} for martingales with bounded increments, the theory of Benoist and Quint in \cite{benoist2016linear} yields the following: there exist constants $C$ and $\sigma$ such that for any $\varepsilon > 0$ and $x \in \partial_B M$, 
\begin{align}\label{eqn:firstExpDecay}\Prob \left(|h_x(\w_n^{-1}\cdot \X_{0}) - n L_{\T} | > \varepsilon n\right) \le C e^{-(n\varepsilon^{2})/2\sigma^{2}} \mbox{ and} 
\\ \label{eqn:secondExpDecay}\Prob \left(|d_{\mathbb{H}^2}(\X_{0}, \w_n \cdot \X_{0}) - n L_{\T}| > \varepsilon n \right) \le C e^{-(n\varepsilon^{2})/2\sigma^{2}}.
\end{align}
Here $L_{\T}$ is the drift of the random walk. Taking $\varepsilon > 4\sigma\sqrt{- \log p}$, the proof of Theorem \hyperref[thm:secondmoment]{A} asserts that \[
\Prob\left( |\log \lambda_{\w_{n}} - n L_{\T}| > 6 \varepsilon n \right) < p^{n}
\]
for sufficiently large $n$. Since all atoms at step $n$ has probability at least $p^{n}$, this implies that $n(L_{\T} - 6\varepsilon) < \log \lambda_{\w_{n}} < n (L_{\T} + 6 \varepsilon)$ for sufficiently large $n$. In other words, we conclude the following.

\distribution* \label{thm:estimation}

It remains to clarify $\sigma$ in Inequality \ref{eqn:firstExpDecay} and Inequality \ref{eqn:secondExpDecay}. For example, we can choose $\sigma$ to be $M_{1} := \max \{ d_{\mathbb{H}^2}(\X_{0}, w \X_{0}) : w \in \supp \nu \}$. Since $p \le 1/2$, we obtain $n (L_{\T} + 6 \varepsilon) > (24 \sqrt{ \log 2}) n M_{1}$. This estimate is no better than $n M_{1}$, which is obtained as follows. Let $w =a_{1} \cdots a_{n}$ be a pseudo-Anosov with $a_{i} \in\supp \nu$, and let $w_{i} = a_{1} \cdots a_{i}$ for $i=1, \ldots, n$. We then observe  \[
\log \lambda_{w} \le d_{\mathbb{H}^2}(\X_{0}, w \cdot \X_{0}) \le d_{\mathbb{H}^2}(\X_{0}, w_{1} \cdot \X_{0}) +  \sum_{i=2}^{n} d_{\mathbb{H}^2}( w_{i-1} \cdot \X_{0}, w_{i} \cdot \X_{0}) \le n M_{1}.
\]
Note also that this estimate applies to every step $n$.

However, $M_{1}$ may be exaggerated in some cases, whereas the drift $L_{\T}$ reads off the cancellation among elements and provides a better bound for the stretch factor. For example, we can fix $s \in \langle T_A, T_B \rangle$ and set $\nu(s T_{A}^{\pm 1} s^{-1}) = \nu (s T_{B}^{\pm 1} s^{-1}) = 1/4$. Then there exists a constant $C > 0$ such that $d_{\mathbb{H}^2}(\X_{0}, a \cdot~\X_{0}) \ge 2 d_{\mathbb{H}^2}(\X_{0}, s \cdot \X_{0}) - C$ for each $a \in \supp \nu$. Thus, both $M_{1}$ and the first moment is at least $2 d_{\mathbb{H}^2}(\X_{0}, s\cdot \X_{0}) - C$.

Now let $w$ be a word with $k$ alphabets in $\supp \nu$. Then we obtain \[
d_{\mathbb{H}^2}(\X_{0}, w \cdot \X_{0}) \le 2 d_{\mathbb{H}^2}(\X_{0}, s\cdot \X_{0}) + k K
\] where \[
K = d_{\mathbb{H}^2}(\X_0, T_{A}^{\pm 1}\cdot \X_0) = d_{\mathbb{H}^2}(\X_0, T_{B}^{\pm 1} \cdot \X_0) = \log {\sqrt{\mu} + \sqrt{4 + \mu} \over 2}.
\]
Thus, for any $K' > K$ and sufficiently large $k$, \[
\max \{d_{\mathbb{H}^2}(\X_{0}, w\cdot \X_{0}) : w =a_{1} \cdots a_{k}, \ a_{i} \in \supp \nu\} < kK'.
\]
In this situation, Inequality \ref{eqn:firstExpDecay} and Inequality \ref{eqn:secondExpDecay} with $\sigma = K'$ and $\varepsilon > 4 K' \sqrt{ \log 4}$ hold for sufficiently large $n$. Note also $L_{\T} \le K'$. Thus, when $d_{\mathbb{H}^{2}}(\X_0, s \cdot \X_0) > 100K' + C$, $n(L_{\T} + 6\varepsilon)$ becomes a better bound than $n M_{1}$. 

Here, using the drift amounts to using $\X =  s \cdot \X_{0}$. With this different pivot point, one can also obtain the bound $\log \lambda_{\w_{n}} \le nK$ for all $n$.

The lower bound $n(L_{\T} - 6\varepsilon)$ may not be informative when $\varepsilon > L_{\T}/6$. This happens, for example, if $\supp \nu$ is symmetric. Nonetheless, Proposition \ref{prop:lowerboundtranslationlength} gives another lower bound for $\lambda_{\w_{n}}$ in terms of the cyclically reduced word norm of $\w_{n}$. Combining them yields the following theorem.

\begin{thm} \label{thm:constantestimate}
	Let $(A, B)$ be a filling pair of multicurves on $S$. Then $$\lambda_{\varphi} \le e^{K|\varphi|}$$ for any pseudo-Anosov $\varphi \in \langle T_A, T_B \rangle$ where $| \cdot |$ is the cyclically reduced word norm with respect to $T_A^{\pm}$ and $T_B^{\pm}$ and $K = \log {\sqrt{\mu} + \sqrt{4 + \mu} \over 2}$.
	
	Moreover, if $\langle T_A, T_B \rangle \cong F_2$, each pseudo-Anosov $\varphi \in \langle T_A, T_B \rangle$ satisfies  $$|\varphi|e^{1/4} \le \lambda_{\varphi} \le e^{K|\varphi|}.$$
\end{thm}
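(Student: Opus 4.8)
The plan is to prove the two bounds separately; both are deterministic and rely only on Thurston's construction, Bers' theorem, and Proposition~\ref{prop:lowerboundtranslationlength}, so none of the probabilistic input of the preceding subsections is needed.

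For the upper bound $\lambda_\varphi \le e^{K|\varphi|}$, which I would establish for an \emph{arbitrary} filling pair $(A,B)$, I would first replace the pseudo-Anosov $\varphi$ by a conjugate $\psi = g\varphi g^{-1} = s_1 s_2 \cdots s_n$ realizing the cyclically reduced word norm, so that $s_i \in \{T_A^{\pm1}, T_B^{\pm1}\}$ and $n = |\varphi|$. Since the stretch factor is a conjugacy invariant, $\lambda_\varphi = \lambda_\psi$, and $\psi$ is again pseudo-Anosov. By Bers' theorem (Theorem~\ref{thm:bers}), $\log\lambda_\psi = \inf_{\mathcal{Y}\in\T(S)} d_{\T}(\mathcal{Y}, \psi\cdot\mathcal{Y}) \le d_{\T}(\X_0, \psi\cdot\X_0)$, where $\X_0 \in f(\mathbb{H}^2) \subseteq \T(S)$ is the point corresponding to the origin of the Teichm\"uller disk; and since $\psi \in \langle T_A, T_B\rangle$ preserves $f(\mathbb{H}^2)$ and acts there as the isometry $\rho(\psi) \in \PR$ (Theorem~\ref{thm:thurstonconstruction}), this equals $d_{\mathbb{H}^2}(\X_0, \rho(\psi)\cdot\X_0)$. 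Inserting the orbit points $\rho(s_1\cdots s_i)\cdot\X_0$ and using that each $\rho(s_i)$ is an isometry gives
\[
d_{\mathbb{H}^2}(\X_0, \rho(\psi)\cdot\X_0) \le \sum_{i=1}^{n} d_{\mathbb{H}^2}(\X_0, \rho(s_i)\cdot\X_0),
\]
and since $d_{\mathbb{H}^2}(\X_0, \rho(T_A^{\pm1})\cdot\X_0) = d_{\mathbb{H}^2}(\X_0, \rho(T_B^{\pm1})\cdot\X_0) = K$ — the computation recorded just before the statement — each summand equals $K$, whence $\log\lambda_\varphi \le nK = K|\varphi|$. No use is made of $\langle T_A, T_B\rangle \cong F_2$ here.

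For the lower bound in the free case, there is nothing left to do: it is exactly Proposition~\ref{prop:lowerboundtranslationlength}, which gives $\log\lambda_\varphi \ge \tfrac{1}{4}\log|\varphi|$ for every pseudo-Anosov $\varphi \in \langle T_A, T_B\rangle \cong F_2$. Exponentiating and combining with the upper bound yields the displayed two-sided estimate.

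I do not anticipate a genuine obstacle. The only input carrying any content is the identity $d_{\mathbb{H}^2}(\X_0, \rho(T_A^{\pm1})\cdot\X_0) = K$, a routine computation with the explicit parabolic matrices $\rho(T_A^{\pm1})$ acting on the Poincar\'e disk (equivalently, one evaluates the operator norm and checks $\log\|\rho(T_A^{\pm1})\| = \log\tfrac{\sqrt\mu+\sqrt{4+\mu}}{2}$, consistent with the normalization $\log\|\rho(w)\| = d_{\mathbb{H}^2}(\X_0, \rho(w)\cdot\X_0)$ of Thurston's construction). The one step requiring slight care is to run the triangle-inequality estimate on a length-minimizing conjugate $\psi$ rather than on $\varphi$ itself; that is precisely what upgrades the bound from the ordinary word norm to the cyclically reduced word norm.
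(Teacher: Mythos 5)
Your proof is correct and matches the paper's intended approach exactly: the upper bound is the triangle-inequality estimate $d_{\mathbb{H}^2}(\X_0, \rho(\psi)\cdot\X_0)\le nK$ with $K=\log\|\rho(T_A^{\pm 1})\|$, which the paper works out in the discussion immediately preceding the theorem (your preliminary conjugation to a representative realizing the cyclically reduced word norm is precisely the step that upgrades the paper's ``$\log\lambda_{\w_n}\le nK$'' from word norm to cyclically reduced word norm and is left implicit there), and the lower bound is Proposition \ref{prop:lowerboundtranslationlength} verbatim. One caveat: exponentiating $\log\lambda_\varphi \ge \tfrac14\log|\varphi|$ gives $\lambda_\varphi\ge|\varphi|^{1/4}$, not the printed $|\varphi|e^{1/4}$, so the displayed lower bound in the theorem statement contains a typographical slip; your argument establishes the intended estimate $|\varphi|^{1/4}\le\lambda_\varphi$ rather than what is literally written.
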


\begin{remark}
	This estimation is \emph{non-probabilistic}.
\end{remark}

It is also interesting to understand the algebraic properties of the stretch factors of pseudo-Anosovs obtained from Thurston's construction. Many known examples are Salem numbers, which are defined as follows.

\begin{definition}[Salem number]
	A real algebraic unit $\lambda > 1$ is a \emph{Salem number} if all Galois conjugates except $\lambda^{\pm 1}$ lie on the unit circle in $\mathbb{C}$.
\end{definition}

One can naturally ask whether every Salem number can be realized as the stretch factor of some pseudo-Anosov from Thurston's construction, which is a special case of Fried's conjecture. Pankau partially answered this question by proving that every Salem number has a power which is the stretch factor arising from Thurston's construction.

\begin{thm}[Pankau, \cite{pankau2017salem}]
	For any Salem number $\lambda$, there are $k, g \in \N$ such that $\lambda^k$ is the stretch factor of a pseudo-Anosov homeomorphism of the surface $S_{g}$ genus $g$ arising from Thurston's construction.
\end{thm}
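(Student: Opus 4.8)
The plan is to deduce the statement from Thurston's construction by exhibiting, for a suitable filling pair $(A,B)$, a single word $w \in \langle T_A, T_B\rangle$ whose image $\rho(w)$ under Thurston's representation is hyperbolic with $|\tr\rho(w)| = \lambda^{m}+\lambda^{-m}$ for some $m \ge 1$; the whole difficulty is then concentrated in a number-theoretic realization problem, which is precisely the content of Pankau's paper.

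First I would record the elementary reduction. By Theorem \ref{thm:thurstonconstruction}, if $(A,B)$ is filling with $\mu=\mu(A,B)$ and $\rho(w)$ is hyperbolic, then $w$ is pseudo-Anosov and its stretch factor $\lambda_{w}$ is the larger root of $X^{2}-|\tr\rho(w)|\,X+1$, so $\lambda_{w}+\lambda_{w}^{-1}=|\tr\rho(w)|$; moreover $\rho(w^{k})=\rho(w)^{k}$ is again hyperbolic and $\lambda_{w^{k}}=\lambda_{w}^{\,k}$ for every $k\ge1$. Conjugating $\rho$ by $\mathrm{diag}(\mu^{1/4},\mu^{-1/4})$ replaces $\rho(T_{A}),\rho(T_{B})$ by $\left(\begin{smallmatrix}1&-1\\0&1\end{smallmatrix}\right)$ and $\left(\begin{smallmatrix}1&0\\ \mu&1\end{smallmatrix}\right)$, so every $\tr\rho(w)$ is an explicit integer polynomial in $\mu$ (for instance $\tr\rho(T_{A}T_{B})=2-\mu$, $\tr\rho(T_{A}T_{B}^{-1})=2+\mu$, $\tr\rho([T_{A},T_{B}])=2+\mu^{2}$), and by the Fricke identities the set $\{\,\tr\rho(w):w\in\langle T_{A},T_{B}\rangle\,\}$ is an explicitly describable (large) subset of $\Z[\mu]$. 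Consequently it suffices to produce a nonsingular positive integer matrix $N$ --- which, by Pankau's realization result recalled above, is the intersection matrix of a filling pair $(A,B)$ of some genus $g$, with $NN^{t}$ automatically primitive because its entries are positive --- together with a word $w$ and an integer $m\ge1$ such that $|\tr\rho(w)|=\lambda^{m}+\lambda^{-m}$. Indeed, then $\lambda_{w}=\lambda^{m}$ and $\lambda^{mk}=\lambda_{w^{k}}$ is the stretch factor of the pseudo-Anosov $w^{k}$ for every $k$, which is the assertion (with exponent $m$ and this genus $g$).

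The heart of the matter is the realization step, and this is where the hypothesis that $\lambda$ is a Salem number enters. Put $\beta:=\lambda+\lambda^{-1}$; since $\lambda$ has degree $2d$, the field $K:=\Q(\beta)$ is totally real of degree $d$, with $\beta>2$ the dominant conjugate and all other conjugates in $(-2,2)$, and $\lambda^{m}+\lambda^{-m}=p_{m}(\beta)\in K$ for the Chebyshev-type polynomial determined by $p_{m}(s+s^{-1})=s^{m}+s^{-m}$; for infinitely many $m$ one moreover has $\Q(\lambda^{m}+\lambda^{-m})=K$. Since $\tr\rho(w)\in\Z[\mu]\subseteq\Q(\mu)$, we need $\Q(\mu)\supseteq K$, so the plan is: (i) choose a totally positive algebraic integer $\mu$ generating $K$ that is a Perron number realized as the Perron--Frobenius eigenvalue of $NN^{t}$ for some nonsingular positive integer matrix $N$ --- starting from a generator of $K$, translating by an integer to make it totally positive and dominant, and then presenting the associated totally positive quadratic form as $NN^{t}$ with $N$ having positive entries (an integral Gram matrix / Lagrange four-square argument; this is exactly Pankau's analysis of which totally real fields arise from Thurston's construction, and one may enlarge $N$ or pass from $\lambda$ to a fixed power to also force $\mu\ge4$); and (ii) having fixed $N$, hence $\mu\in K$ with $\Q(\mu)=K\ni\beta$, locate, via the Fricke trace calculus $\tr(UV)+\tr(UV^{-1})=\tr U\,\tr V$ and $\tr(U^{n})=p_{n}(\tr U)$ in the rank $2$ free group $\langle\rho(T_{A}),\rho(T_{B})\rangle$, a word $w$ with $|\tr\rho(w)|=\lambda^{m}+\lambda^{-m}$ for a suitable $m$ (replacing $\lambda$ by a power if needed to land inside $\Z[\mu]$ and to force $|\tr\rho(w)|>2$, so that $\rho(w)$ is hyperbolic). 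Finally one reads $g$ off from the disc decomposition of $A\cup B$ and concludes.

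I expect steps (i)--(ii) --- fusing the number theory of the Salem number (the totally real field $\Q(\lambda+\lambda^{-1})$, Perron numbers in the sense of Lind, integral Gram matrices) with the question of which elements of $\Z[\mu]$ are genuinely word-traces of the specific pair $\left(\left(\begin{smallmatrix}1&-1\\0&1\end{smallmatrix}\right),\left(\begin{smallmatrix}1&0\\ \mu&1\end{smallmatrix}\right)\right)$, and simultaneously insisting that $\mu$ come from a positive integer intersection matrix rather than from an arbitrary positive-definite symmetric integer matrix --- to be the main obstacle; it is exactly the content of Pankau's paper \cite{pankau2017salem}, while everything else in the argument is bookkeeping on top of Thurston's construction and the classification of its elements.
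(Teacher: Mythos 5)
The paper does not prove this statement at all: it is imported verbatim from Pankau with the citation \cite{pankau2017salem} and used as a black box, so there is no internal proof to compare against. Judged on its own terms, your proposal correctly identifies the reduction on which Pankau's argument rests --- by Theorem \ref{thm:thurstonconstruction} it suffices to exhibit a filling pair $(A,B)$ with Perron--Frobenius eigenvalue $\mu$ and a word $w$ with $|\tr\rho(w)|=\lambda^{m}+\lambda^{-m}$, and indeed a word of length two suffices (as the paper notes in Subsection \ref{subsec:distribution}) --- but it is not a proof. Your steps (i) and (ii) \emph{are} the theorem, and you discharge them by declaring that they are ``exactly the content of Pankau's paper.'' That is circular: one cannot prove Pankau's theorem by appeal to Pankau's proof.

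Concretely, the unproved core is the realization statement: for some $k\ge 1$, the algebraic integer $\mu=\lambda^{k}+\lambda^{-k}+2$ is the Perron--Frobenius eigenvalue of $NN^{t}$ for a nonsingular positive integer matrix $N$. Note the sign: with $w=T_{A}T_{B}$ one has $|\tr\rho(w)|=\mu-2$, forcing $\mu=\lambda^{k}+\lambda^{-k}+2$, whose conjugates $2+2\cos(k\theta_{j})$ are nonnegative; your alternative $w=T_{A}T_{B}^{-1}$ would force $\mu=\lambda^{k}+\lambda^{-k}-2$, which is \emph{not} totally positive (its conjugates $2\cos(k\theta_{j})-2$ are negative), and is therefore never an eigenvalue of a positive semidefinite integer matrix $NN^{t}$ --- a concrete instance of real constraints hiding inside what you call bookkeeping. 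Establishing exactly which totally positive, strictly dominant algebraic integers arise as Perron--Frobenius eigenvalues of $NN^{t}$ with $N$ a \emph{positive} integer matrix, and that the Salem data lands in that class after passing to a power, is the entire mathematical content of \cite{pankau2017salem}; a ``Lagrange four-square / Gram matrix'' gesture does not supply it. Until that step is carried out, your text is an accurate road map, not a proof.
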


In fact, Pankau's proof concludes that the pseudo-Anosov for a given Salem number can be obtained as a word of two positive multitwists $T_A$ and $T_B$. Together with Theorem \ref{thm:constantestimate}, we get the following corollary.

\begin{cor}
	Let $\lambda$ be a Salem number and suppose that $\lambda^k$ is the stretch factor of a pseudo-Anosov $\varphi$ from Thurston's construction. Then we have $$\lambda < \lambda^k \le e^{K |\varphi|}.$$
\end{cor}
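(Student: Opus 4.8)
The corollary is a direct consequence of Theorem~\ref{thm:constantestimate}; Pankau's realization theorem plays no role in the argument beyond guaranteeing that the hypothesis is not vacuous (every Salem number does occur as $\lambda^k$ for some Thurston-construction pseudo-Anosov $\varphi$, with $\varphi$ moreover expressible as a word in $T_A^{\pm 1},T_B^{\pm 1}$). So the plan is: unpack the hypothesis, invoke Theorem~\ref{thm:constantestimate}, and append the trivial lower estimate.

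By hypothesis there is a filling pair $(A,B)$, with Perron--Frobenius eigenvalue $\mu=\mu(A,B)$ and Thurston representation $\rho\colon\langle T_A,T_B\rangle\to\PR$, and a pseudo-Anosov $\varphi\in\langle T_A,T_B\rangle$ whose stretch factor equals $\lambda^k$. First I would apply the unconditional half of Theorem~\ref{thm:constantestimate} to this $\varphi$: it gives $\lambda_\varphi\le e^{K|\varphi|}$, where $|\varphi|$ is the cyclically reduced word norm in $T_A^{\pm 1},T_B^{\pm 1}$ and $K=\log\frac{\sqrt\mu+\sqrt{4+\mu}}{2}=d_{\mathbb{H}^{2}}(\X_0,T_A^{\pm 1}\cdot\X_0)=d_{\mathbb{H}^{2}}(\X_0,T_B^{\pm 1}\cdot\X_0)$. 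Since $\lambda_\varphi=\lambda^k$, this is exactly $\lambda^k\le e^{K|\varphi|}$.

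For the remaining inequality $\lambda<\lambda^k$: a Salem number is in particular a real number $\lambda>1$, so $\lambda\le\lambda^k$ for every $k\ge 1$, with strict inequality once $k\ge 2$ (the degenerate case $k=1$, in which $\lambda$ itself would be the stretch factor, is precisely what Pankau's passage to a power avoids). Concatenating the two estimates yields $\lambda<\lambda^k\le e^{K|\varphi|}$.

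I do not anticipate a genuine obstacle: the content sits entirely inside Theorem~\ref{thm:constantestimate}, whose own proof combines Theorem~\ref{thm:thurstonconstruction}(3) (the stretch factor of $\varphi$ is the top eigenvalue of $\rho(\varphi)$, hence the translation length of $\rho(\varphi)$ on $\mathbb{H}^{2}$) with the telescoping bound $d_{\mathbb{H}^{2}}(\X_0,\rho(w)\cdot\X_0)\le |w|\,K$ coming from the triangle inequality applied letter by letter. The only things to verify carefully are the bookkeeping --- that the ``$K$'' and ``$|\varphi|$'' of the corollary literally coincide with those of Theorem~\ref{thm:constantestimate}, which forces one to note that Pankau's monodromy really is an honest word in $T_A^{\pm 1},T_B^{\pm 1}$ --- and the harmless fact $\lambda>1$. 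Should one want the sharper two-sided statement of the introduction, it suffices to additionally feed $\log\lambda_\varphi\ge\tfrac14\log|\varphi|$ from Proposition~\ref{prop:lowerboundtranslationlength} (valid when $\langle T_A,T_B\rangle\cong F_2$) into $\lambda_\varphi=\lambda^k$, obtaining $k\ge\frac{\log|\varphi|}{4\log\lambda}$; but the corollary as stated needs only the upper bound.
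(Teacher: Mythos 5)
Your proposal is correct and takes essentially the same route as the paper: the paper's proof is exactly the one-line deduction you give, namely ``the existence of $k$ and a configuration for Thurston's construction is guaranteed by the work of Pankau; then by Theorem~\ref{thm:constantestimate}, $\lambda^k = \lambda_{\varphi} \le e^{K|\varphi|}$.'' One small remark: the paper's proof, like yours, does not actually derive $\lambda<\lambda^k$ from the stated hypotheses --- it would require $k\ge 2$, which is implicit in Pankau's construction rather than in the literal wording of the corollary --- and your parenthetical acknowledgment of this is reasonable but no more (and no less) rigorous than what the paper itself offers.
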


\begin{proof}
	The existence of $k$ and a configuration for Thurston's construction is guaranteed by the work of Pankau. Then by Theorem \ref{thm:constantestimate}, we conclude that $\lambda^k = \lambda_{\varphi} \le e^{K |\varphi|}$.
\end{proof}

One interesting consequence of above inequality, together with Proposition \ref{prop:lowerboundtranslationlength}, is that we can estimate the power $k$.

\salempower* \label{thm:salempower}

In fact, Pankau constructed a pseudo-Anosov of word norm two whose stretch factor is some power of a given Salem number $\lambda$. Accordingly, one can rewrite the above inequalities if one focuses only on the specific configuration for Thurston's construction considered by Pankau in \cite{pankau2017salem}.

%%%%%%%%%%%%%%%%%%%%%%%%%%%%%%%%%%%%%%%
%
%							References
%
%%%%%%%%%%%%%%%%%%%%%%%%%%%%%%%%%%%%%%%

\medskip
\bibliographystyle{alpha}
\bibliography{thurston}

\newcommand{\etalchar}[1]{$^{#1}$}
\begin{thebibliography}{AKM65}

\bibitem[AKM65]{adler1965topological}
Roy~L Adler, Alan~G Konheim, and M~Harry McAndrew.
\newblock Topological entropy.
\newblock {\em Transactions of the American Mathematical Society},
  114(2):309--319, 1965.

\bibitem[Azu67]{azuma1967sum}
Kazuoki Azuma.
\newblock Weighted sums of certain dependent random variables.
\newblock {\em Tohoku Mathematical Journal}, 19(3):357--367, 1967.

\bibitem[B{\etalchar{+}}78]{bers1978extremal}
Lipman Bers et~al.
\newblock An extremal problem for quasiconformal mappings and a theorem by
  thurston.
\newblock {\em Acta Mathematica}, 141:73--98, 1978.

\bibitem[BQ16a]{benoist2016linear}
Yves Benoist and Jean-Fran{\c{c}}ois Quint.
\newblock Central limit theorem for linear groups.
\newblock {\em Annals of Probability}, 44(2):1308--1340, 2016.

\bibitem[BQ16b]{benoist2016central}
Yves Benoist and Jean-Fran{\c{c}}ois Quint.
\newblock Central limit theorem on hyperbolic groups.
\newblock {\em Izvestiya: Mathematics}, 80(1):3, 2016.

\bibitem[Bro03]{brock2003weil}
Jeffrey~F Brock.
\newblock Weil-petersson translation distance and volumes of mapping tori.
\newblock {\em Communications in analysis and geometry}, 11(5):987--1000, 2003.

\bibitem[Bur69]{burns1969free}
Roger~G. Burns.
\newblock A note on free groups.
\newblock {\em Proceedings of the American Mathematical Society}, 23(1):14--17,
  1969.

\bibitem[CF08]{chung2008distribution}
Kai~Lai Chung and Wolfgang Heinrich~Johannes Fuchs.
\newblock On the distribution of values of sums of random variables.
\newblock In {\em Selected Works Of Kai Lai Chung}, pages 157--168. World
  Scientific, 2008.

\bibitem[DH18]{dahmani2018spectral}
Fran{\c{c}}ois Dahmani and Camille Horbez.
\newblock Spectral theorems for random walks on mapping class groups and
  {O}ut(${F}_{N}$).
\newblock {\em International Mathematics Research Notices}, 2018(9):2693--2744,
  2018.

\bibitem[Dur19]{durrett2019probability}
Rick Durrett.
\newblock {\em Probability: theory and examples}, volume~49.
\newblock Cambridge university press, 2019.

\bibitem[EST20]{erlandsson2020genericity}
Viveka Erlandsson, Juan Souto, and Jing Tao.
\newblock Genericity of pseudo-anosov mapping classes, when seen as mapping
  classes.
\newblock {\em arXiv preprint arXiv:2001.03576}, 2020.

\bibitem[FK60]{furstenberg1960products}
Harry Furstenberg and Harry Kesten.
\newblock Products of random matrices.
\newblock {\em The Annals of Mathematical Statistics}, 31(2):457--469, 1960.

\bibitem[FLP79]{FLP}
Albert Fathi, Fran\c{c}ois Laudenbach, and Valentin Po\'{e}naru.
\newblock {\em Travaux de {T}hurston sur les surfaces}, volume~66 of {\em
  Ast\'erisque}.
\newblock Soci\'et\'e Math\'ematique de France, Paris, 1979.
\newblock S{\'e}minaire Orsay, With an English summary.

\bibitem[FM12]{FarbMargalit12}
Benson Farb and Dan Margalit.
\newblock {\em A primer on mapping class groups}, volume~49 of {\em Princeton
  Mathematical Series}.
\newblock Princeton University Press, Princeton, NJ, 2012.

\bibitem[Hal49]{hall1949coset}
Marshall~Jr. Hall.
\newblock Coset representations in free groups.
\newblock {\em Transactions of the American Mathematical Society},
  67(2):421--432, 1949.

\bibitem[HS07]{herrlich2007boundary}
Frank Herrlich and Gabriela Schmith{\"u}sen.
\newblock On the boundary of teichm{\"u}ller disks in teichm{\"u}ller and in
  schottky space.
\newblock {\em Handbook of Teichm{\"u}ller theory}, 1:293--349, 2007.

\bibitem[HT02]{hamidi2002groups}
Hessam Hamidi-Tehrani.
\newblock Groups generated by positive multi-twists and the fake lantern
  problem.
\newblock {\em Algebraic \& Geometric Topology}, 2(2):1155--1178, 2002.

\bibitem[Kar14]{karlsson2014two}
Anders Karlsson.
\newblock Two extensions of thurston's spectral theorem for surface
  diffeomorphisms.
\newblock {\em Bulletin of the London Mathematical Society}, 46(2):217--226,
  2014.

\bibitem[Kid08]{kida2008mapping}
Yoshikata Kida.
\newblock {\em The mapping class group from the viewpoint of measure
  equivalence theory}.
\newblock American Mathematical Soc., 2008.

\bibitem[KM18]{kojima2018normalized}
Sadayoshi Kojima and Greg McShane.
\newblock Normalized entropy versus volume for pseudo-anosovs.
\newblock {\em Geometry \& Topology}, 22(4):2403--2426, 2018.

\bibitem[Kow08]{Kowalski2008sieve}
Emmanuel Kowalski.
\newblock {\em The large sieve and its applications: Arithmetic Geometry,
  Random Walks and Discrete Groups}, volume 175 of {\em Cambridge Tracts in
  Mathematics}.
\newblock Cambridge University Press, 2008.

\bibitem[Lei04]{leininger2004groups}
Christopher~J Leininger.
\newblock On groups generated by two positive multi-twists: Teichm{\"u}ller
  curves and lehmer's number.
\newblock {\em Geometry \& Topology}, 8(3):1301--1359, 2004.

\bibitem[Lin74]{linch1974comparison}
Michele Linch.
\newblock A comparison of metrics on teichm{\"u}ller space.
\newblock {\em Proceedings of the American Mathematical Society},
  43(2):349--352, 1974.

\bibitem[M{\etalchar{+}}11]{maher2011random}
Joseph Maher et~al.
\newblock Random walks on the mapping class group.
\newblock {\em Duke Mathematical Journal}, 156(3):429--468, 2011.

\bibitem[Mas18]{masai2018topological}
Hidetoshi Masai.
\newblock Topological entropy of random walks on mapping class groups.
\newblock {\em International Mathematics Research Notices}, 2018(3):739--761,
  2018.

\bibitem[Mos68]{mostow1968quasi}
George~D Mostow.
\newblock Quasi-conformal mappings in $ n $-space and the rigidity of
  hyperbolic space forms.
\newblock {\em Publications Math{\'e}matiques de l'IH{\'E}S}, 34:53--104, 1968.

\bibitem[MP89]{mccarthy1989dynamics}
John McCarthy and Athanase Papadopoulos.
\newblock Dynamics on thurston's sphere of projective measured foliations.
\newblock {\em Commentarii Mathematici Helvetici}, 64(1):133--166, 1989.

\bibitem[MT18]{maher2018random}
Joseph Maher and Giulio Tiozzo.
\newblock Random walks on weakly hyperbolic groups.
\newblock {\em Journal f{\"u}r die reine und angewandte Mathematik (Crelles
  Journal)}, 2018(742):187--239, 2018.

\bibitem[Ota96]{otal1996theoreme}
Jean-Pierre Otal.
\newblock {\em Le th{\'e}or{\`e}me d'hyperbolisation pour les vari{\'e}t{\'e}s
  fibr{\'e}es de dimension 3}, volume 235.
\newblock Soci{\'e}t{\'e} math{\'e}matique de France Paris, 1996.

\bibitem[Pan17]{pankau2017salem}
Joshua Pankau.
\newblock Salem number stretch factors and totally real fields arising from
  thurston's construction.
\newblock {\em arXiv preprint arXiv:1711.06374}, 2017.

\bibitem[Pen88]{penner1988construction}
Robert~C Penner.
\newblock A construction of pseudo-anosov homeomorphisms.
\newblock {\em Transactions of the American Mathematical Society},
  310(1):179--197, 1988.

\bibitem[Pol10]{pollicott2010maximal}
Mark Pollicott.
\newblock Maximal lyapunov exponents for random matrix products.
\newblock {\em Inventiones mathematicae}, 181(1):209--226, 2010.

\bibitem[Riv08]{rivin2008walks}
Igor Rivin.
\newblock Walks on groups, counting reducible matrices, polynomials, and
  surface and free group automorphisms.
\newblock {\em Duke Mathematical Journal}, 142(2):353--379, 2008.

\bibitem[SS16]{shin2016pseudo}
Hyunshik Shin and Bal{\'a}zs Strenner.
\newblock Pseudo-anosov mapping classes not arising from penner's construction.
\newblock {\em Geometry \& Topology}, 19(6):3645--3656, 2016.

\bibitem[T{\etalchar{+}}88]{thurston1988geometry}
William~P Thurston et~al.
\newblock On the geometry and dynamics of diffeomorphisms of surfaces.
\newblock {\em Bulletin (new series) of the american mathematical society},
  19(2):417--431, 1988.

\bibitem[Via19]{viaggi2019volumes}
Gabriele Viaggi.
\newblock Volumes of random 3-manifolds.
\newblock {\em arXiv preprint arXiv:1905.04935}, 2019.

\end{thebibliography}

\end{document}